\newcommand{\tiid}{i.i.d.\ }
\newcommand{\Kurt}{\operatorname{Kurt}}
\numberwithin{equation}{section} 
\newtheorem{lem}{Lemma}[section]
\newtheorem*{lem*}{Lemma}
\newtheorem{satz}[lem]{Theorem}
\newtheorem{prop}[lem]{Proposition}
\newtheorem*{prop*}{Proposition}
\newtheorem*{thm*}{Theorem}
\newtheorem{cor}[lem]{Corollary}
\newtheorem*{cor*}{Corollary}
\theoremstyle{definition}
\newtheorem*{defi*}{Definition}
\theoremstyle{remark}
\newtheorem*{example*}{Example}
\newcounter{assum}
\newenvironment{assum}{\par\noindent\refstepcounter{assum}\textbf{Assumption (A\arabic{assum})} \itshape}{\par}
\newcommand{\assref}[1]{(A\ref{#1})}
\newcommand{\Cov}{\operatorname{Cov}}
\newcommand{\innp}[2]{\langle #1, #2\rangle}
\declaretheorem[style=definition,sibling=lem, qed=$\diamondsuit$, name=Remark]{rem}
\newcommand{\set}[1]{\{#1\}}
\newcommand{\E}{\mathbb E}
\newcommand{\floor}[1]{\left\lfloor #1 \right\rfloor}
\newcommand{\ceil}[1]{\left\lceil #1 \right\rceil}
\newcommand{\der}{\partial}
\newcommand*{\cA}{\mathcal{A}}
\newcommand*{\cB}{\mathcal{B}}
\newcommand*{\cD}{\mathcal{D}}
\newcommand*{\cF}{\mathcal{F}}
\newcommand*{\cH}{\mathcal{H}}
\newcommand*{\cN}{\mathcal{N}}
\newcommand*{\cO}{\mathcal{O}}
\newcommand*{\cR}{\mathcal{R}}
\newcommand*{\cS}{\mathcal{S}}
\newcommand*{\cZ}{\mathcal{Z}}
\newcommand*{\N}{\mathbb{N}}
\newcommand*{\R}{\mathbb{R}}
\renewcommand*{\P}{\mathbb{P}}
\newcommand*{\om}{\omega}
\newcommand*{\Om}{\Omega}
\newcommand*{\si}{\sigma}
\newcommand*{\Si}{\Sigma}
\newcommand*{\al}{\alpha}
\newcommand*{\be}{\beta}
\newcommand*{\thet}{\theta}
\newcommand*{\delt}{\delta}
\newcommand*{\Delt}{\Delta}
\newcommand*{\ph}{\varphi}
\newcommand*{\la}{\lambda}
\newcommand*{\ka}{\kappa}
\newcommand*{\ga}{\gamma}
\newcommand*{\ep}{\varepsilon}
\newcommand{\Lip}{\operatorname{Lip}}
\newcommand*{\blnk}{\cdot}
\newcommand{\tIto}{Itô}
\newcommand{\tBm}{Brownian motion}
\newcommand*{\tme}{\mathbb T}
\newcommand*{\nrm}[2]{\|#1\|_{#2}}
\newcommand*{\Var}{\operatorname{Var}}
\DeclareMathOperator*{\amin}{\operatorname{argmin}}
\newcommand*{\tSDE}{stochastic differential equation}
\newcommand*{\tODE}{ordinary differential equation}
\newcommand*{\mat}[1]{\begin{pmatrix}#1\end{pmatrix}}
\newcommand*{\idK}{\mathds 1}
\newcommand{\showtodos}{show them}  
\noindent\color{blue}$\spadesuit$}%
\newcounter{alphasect}
\def\alphainsection{0}
\let\oldsection=\section
\def\section{%
	\ifnum\alphainsection=1%
	\addtocounter{alphasect}{1}
	\fi%
	\oldsection}%
\renewcommand\thesection{%
	\ifnum\alphainsection=1%
	\Alph{alphasect}%
	\else
	\arabic{section}%
	\fi%
}%
\newenvironment{alphasection}{%
	\ifnum\alphainsection=1%
	\errhelp={Let other blocks end at the beginning of the next block.}
	\errmessage{Nested Alpha section not allowed}
	\fi%
	\setcounter{alphasect}{0}
	\def\alphainsection{1}
}{%
	\setcounter{alphasect}{0}
	\def\alphainsection{0}
}%
\newcommand{\bx}{\bm{x}}
\newcommand{\by}{\bm{y}}
\newcommand{\bz}{\bm{z}}
\newcommand{\GSGD}{\chi}
\newcommand{\nrmp}[2]{\nrm{#1}{#2}}
\newcommand{\nrmps}[2]{\nrm{#1}{*#2}}
\newcommand{\nrmpsp}[3]{\nrm{#1}{*#2}^{#3}}
\begin{document}
\title{Unlocking optimal batch size schedules using continuous-time control and perturbation theory}
\author{Stefan Perko\thanks{Institute for Mathematics, Friedrich-Schiller-University Jena, 07737 Jena, Germany.  Email: stefan.perko@uni-jena.de}}
\maketitle

\begin{abstract}
Stochastic Gradient Descent (SGD) and its variants are almost universally used to train neural networks and to fit a variety of other parametric models. An important hyperparameter in this context is the batch size, which determines how many samples are processed before an update of the parameters occurs. Previous studies have demonstrated the benefits of using variable batch sizes. In this work, we will theoretically derive optimal batch size schedules for SGD and similar algorithms, up to an error that is quadratic in the learning rate. To achieve this, we approximate the discrete process of parameter updates using a family of stochastic differential equations indexed by the learning rate. To better handle the state-dependent diffusion coefficient, we further expand the solution of this family into a series with respect to the learning rate. Using this setup, we derive a continuous-time optimal batch size schedule for a large family of diffusion coefficients and then apply the results in the setting of linear regression.
\end{abstract}

\section{Introduction}
Let $d \in \N$ and consider a family of risk functions 
\[R : \R^d \times \cZ \to [0,\infty), (\thet, z) \mapsto R_z(\thet)\]
and a probability measure $\nu$ on $\cZ$. The risk minimization task associated with $(R,\nu)$ is
\begin{equation}
\label{eq:riskMin}
\min_{\thet\in \R^d} \cR(\thet),
\end{equation}
where $\cR(\thet) = \E_{z\sim \nu}[R_z(\thet)]$.
To solve \eqref{eq:riskMin} one frequently uses a one-step method of the form
\begin{equation}
\label{eq:mbSGD}
\chi_{n+1}^h = \chi_n^h + h f_{nh}(\chi_n^h),
\end{equation}
for a learning rate $h\in (0,1)$, where $(f_t^h)_{t\geq 0, h\in (0,1)}$ is a family of independent random functions $\R^d\to \R^d$.

For convenience we use a continuous time point to index $f$, thereby viewing \eqref{eq:mbSGD} at the time points $n = 0, \dots, \floor{T/h}$ as a (stochastic) discretization of the following ODE
\begin{equation}
dX_t^0 = \E f_t(X_t^0)\,dt, \quad t\in [0,T],
\end{equation}
for a given continuous time horizon $T > 0$.

We are interested in studying a version of \eqref{eq:mbSGD} called \emph{mini-batch SGD}.
To this end, fix an \tiid sequence $\bz_0, \bz_1,\dots$, with $\bz_0 \sim \nu$, and a sequence of \emph{batch sizes} $(B_n)_{n\in \N}$. Consider the sequence of \emph{batches}
\[\cB_1 = \set{\bz_0, \dots, \bz_{B_1}}, \cB_2 = \set{\bz_{B_1 + 1}, \dots, \bz_{B_1 + B_2}}, \dots.\]
Then mini-batch SGD, with batch sizes $(B_n)_{n\in \N}$, uses the sequence of estimators
\begin{equation}
	\label{eq:mbgrads}
	f_{nh}(\thet) = - \frac{1}{B_n} \sum_{z\in \cB_n} \nabla R_z(\thet),\quad n\in \N
\end{equation}
Assuming $\E$ and $\nabla$ commute, we have $\E f_{nh}(\thet) = -\nabla \cR(\thet)$. Further, the covariance matrix of $f_{nh}$ is given by
\begin{align*}
	\Cov[f_{nh}(\thet)] =& \frac 1 {B_n} \Si(\thet),
\end{align*}
where
\[\Si(\thet) := \Cov_{z\sim \nu}[\nabla R_z(\thet)].\]
for all $\thet\in \R^d$. We can identify the sequence of \emph{inverse} batch sizes as a \emph{volatility control} $\al$, i.e.\ $\al_{nh} = \frac{1}{B_n}$. Since batch sizes are bounded below by $1$, we have the natural bounds $0 \leq \al \leq 1$.

For technical reasons and to simplify the upcoming theory considerably, we require volatility controls to be continuous, which also means we allow non-integer batch sizes.
Thus, for any continuous $\al : [0,T] \to [0,1]$, we now consider a (fictitious) variant of SGD, given by 
\begin{equation}
	\label{eq:fmbSGD}
	\chi_{n+1}^{h,\al} = \chi_n^{h,\al} + h f_{nh}^{h,\al}(\chi_n^h),
\end{equation}
with
\[\E[f_t^{h,\al}(\thet)] = -\nabla \cR(\thet), \quad \Cov[f_t^{h,\al}(\thet)] = \al_t \Si(\thet),\]
for all $h\in (0,1), t\in[0,T]$ and $\thet \in \R$.
We refer to \eqref{eq:fmbSGD} as \emph{fractional batch size SGD}. 

Now, our goal is finding an \emph{optimal} sequence of batch sizes, so that the error $\E[\cR(\chi_M^h)]$ for a given final time step $M$ is minimal.

Of course, stating the problem this way suggests setting the batch size to be maximal, since this makes our estimate of the true gradient as accurate as possible. However, a higher batch size also means higher computational cost. Therefore, we will postulate the condition that the number of data points used is fixed, i.e.
\begin{equation}
\label{eq:totalBSfixed}
\sum_{n=1}^M B_n = \frac c h
\end{equation}
for some constant $c\geq T$, where we divide by $h$, with $c/h \in \N$, for convenience. For SGD \emph{without replacement}\footnote{Note that our theory technically only applies to SGD without replacement, with a \emph{single} epoch.} (which is commonly used in practice) one would usually consider $\frac c h = \text{sample size} \times \text{epochs}$. Insisting that $c\geq T$ is natural, since, for $T/h \in \N$,
\[\frac c h = \text{number of samples processed} \geq \text{number of SGD steps} = \frac T h,\]
and the lower bound is obtained by choosing batch size $1$ in each step. Suppose $B = \al^{-1}$. Then under Condition \eqref{eq:totalBSfixed},
\[c = h \sum_{n=1}^{\floor{T/h}} B_{nh} = h \sum_{n=1}^{\floor{T/h}} \frac{1}{\al_{nh}} \to \int_0^T \frac{1}{\al_t}\,dt, \quad h\downarrow 0.\]
Thus, in the continuous-time setting, condition \eqref{eq:totalBSfixed} corresponds to the following condition on the volatility control
\begin{equation}
\label{eq:subjectTo}
\int_0^T \frac{1}{\al_t} \,dt = c.
\end{equation}
Therefore, we may consider the following optimal volatility control problem: Given $c\geq T > 0$, determine
\begin{equation}
\label{eq:origDiffControlProb}
\amin_{\al\in A(L)}\E[\cR(\chi_{\floor{T/h}}^{h,\al})],
\end{equation}
where the set of admissible controls is given by
\[A(L) = \set{\al : [0,T] \to [0, 1] : \nrm{\sqrt{\al}}{\Lip} \leq L, \int_0^T \frac{1}{\al_t} \,dt = c},\]
for some sufficiently large $L > 0$. The Lipschitz condition on $\sqrt \al$ is necessary for the continuous-time theory (cf. Section \ref{sec:contTheory}) to be applicable to this problem.

Initially, one could hope to find an explicit solution to \eqref{eq:origDiffControlProb}, at least in dimension $d = 1$. However, this is very difficult or perhaps impossible. Following \cite{li_stochastic_2017}, our idea is instead to approximate the discrete-time SGD iterations using a family of continuous-time diffusion processes. Then we can apply optimal control theory to the approximating \tSDE s and solve \eqref{eq:origDiffControlProb}, \emph{up to} an error $Ch^2$, where $h$ is the learning rate and $C$ is an increasing function of the parameter $L$. The explicit solution of this relaxed problem is the content of our main result Theorem \ref{thm:optBS}. Since the goal is to find an explicit solution, a further complication arises. In most problems, the variance of the sample gradients $\Si$ is non-constant and even state-dependent. We solve this issue by expanding the diffusion approximation again into a series with respect to the learning rate. This allows for a significant simplification of the control problem.

Aside from focusing on batch size rather than learning rate schedules, our work extends the approach in \cite{li_stochastic_2017} in several aspects:

\paragraph{Summary of contributions}
\begin{itemize}
\item We establish, to our knowledge for the first time, a \emph{rigorous} theory for transferring deterministic optimal controls from a continuous-time diffusion approximation of a numerical one-step stochastic method back to discrete-time. This includes extending the theory of (second-order) stochastic modified equations in \cite{li_stochastic_2019} to allow for time-dependent drift and diffusion coefficients. Thus, we are able to study SGD with learning rate and batch size schedules in continuous-time.
\item Using perturbation theory, we reduce the continuous-time optimal control problem to a linear control problem, without resorting to unrealistic assumptions on the diffusion coefficient. In particular, in contrast to previous works, we do \emph{not} assume the variance of the sampled gradients $\Si$ to be constant and explicitly allow it to be state-dependent.
\item We demonstrate the potential of our theory by deriving an \emph{explicit} quasi-optimal batch size schedule using the continuous-time Pontryagin maximum principle.
\end{itemize}

We remark that in practice it is reasonable to use the largest mini-batch size such that all mini-batches fit into memory. In this setting we will use the term \emph{batch size} to refer to \emph{gradient accumulation} instead, i.e.\ the number of batches until an update is made. We will no longer explicitly make this distinction, because it makes no essential difference to our theory.

\paragraph{Failure of the first-order batch size theory}
To solve a the optimal batch size control problem, at least in a relaxed sense, we expand the expected risk $\E[\cR(\chi_{\floor{T/h}}^h)]$ into a series in $h$ with a remainder term of size $h^k$ for some $k\in \N$ . Then we seek a statement of the following form. Fix $L > 0$ sufficiently large. Then there exists a $C$, depending on $L$, and a $\al^*\in A(L)$, such that
\begin{equation}
\label{eq:quasiOpt}
\left|\inf_{\al^*\in A(L)} \E \cR(\chi_{\floor{T/h}}^{h,\al}) - \E \cR(\chi_{\floor{T/h}}^{h,\al^*})\right|\leq Ch^k.
\end{equation}
For example, if we let $k = 1$, then we can approximate SGD using a continuous-time first-order approximation, e.g. (cf. \cite{li_stochastic_2017})
\[dX_t^h = -\nabla \cR(X_t^h)\,dt + \sqrt{h\al_t\Si(X_t^h)}\,dW_t.\]
The following negative result demonstrates why considering $k = 1$ in \eqref{eq:quasiOpt} is too crude for a useful theory of almost optimality of batch size schedules.
\begin{prop}
\label{prop:1stOrderFail}
Let $L > 0$. There exists a $C > 0$, such that for all $\al^* \in \cA_L$, we have
\[\left|\inf_{\al^*\in A(L)} \E \cR(\chi_{\floor{T/h}}^{h,\al}) - \E \cR(\chi_{\floor{T/h}}^{h,\al^*})\right|\leq Ch.\]
\end{prop}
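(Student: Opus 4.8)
The key point is that, to first order in $h$, the expected risk does not see the control at all: whichever $\al\in A(L)$ is used, the iterates $\chi^{h,\al}$ share the mean drift $-\nabla\cR$ and hence cluster around the single gradient-flow trajectory $X^0$ solving $\dot X^0_t=-\nabla\cR(X^0_t)$, with $\al$ influencing only the $O(h)$ correction. So the plan is to first prove the \emph{uniform weak bound}
\begin{equation}\label{eq:planWeak}
\sup_{\al\in A(L)}\bigl|\E[\cR(\chi^{h,\al}_{\floor{T/h}})]-\cR(X^0_T)\bigr|\le C\,h
\end{equation}
for a constant $C=C(L,T)$ and all small $h$, and then conclude. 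Granting \eqref{eq:planWeak}, the proposition follows by the triangle inequality: for any $\al^*\in A(L)$ and any $\al\in A(L)$ one gets $\E[\cR(\chi^{h,\al^*}_{\floor{T/h}})]-2Ch\le\E[\cR(\chi^{h,\al}_{\floor{T/h}})]$, so taking the infimum over $\al$ and using $\inf_\al\E[\cR(\chi^{h,\al}_{\floor{T/h}})]\le\E[\cR(\chi^{h,\al^*}_{\floor{T/h}})]$ yields $0\le\E[\cR(\chi^{h,\al^*}_{\floor{T/h}})]-\inf_{\al\in A(L)}\E[\cR(\chi^{h,\al}_{\floor{T/h}})]\le 2Ch$, which is the claim (with $2C$ in place of $C$).

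For \eqref{eq:planWeak} I would run the standard weak-order-one argument for one-step methods, telescoped along the flow of the \emph{deterministic} limit ODE, so that no Kolmogorov backward PDE is needed. Let $\Phi_t:\R^d\to\R^d$ be the time-$t$ flow of $\dot x=-\nabla\cR(x)$, set $N=\floor{T/h}$ and $\psi_n:=\cR\circ\Phi_{T-nh}$, so that $\psi_0(\chi_0)=\cR(X^0_T)$, $\psi_N(\chi_N)=\cR(\chi_N)$ up to an $O(h)$ boundary term (exactly if $T/h\in\N$), and $\psi_n=\psi_{n+1}\circ\Phi_h$. Telescoping,
\[\E[\cR(\chi^{h,\al}_N)]-\cR(X^0_T)=\sum_{n=0}^{N-1}\E\bigl[\psi_{n+1}(\chi^{h,\al}_{n+1})-\psi_{n+1}(\Phi_h(\chi^{h,\al}_n))\bigr]+O(h).\]
Conditionally on $\chi^{h,\al}_n$, both $\chi^{h,\al}_{n+1}=\chi^{h,\al}_n+hf^{h,\al}_{nh}(\chi^{h,\al}_n)$ and $\Phi_h(\chi^{h,\al}_n)=\chi^{h,\al}_n-h\nabla\cR(\chi^{h,\al}_n)+O(h^2)$ are $O(h)$-perturbations of $\chi^{h,\al}_n$; a second-order Taylor expansion of $\psi_{n+1}$ at $\chi^{h,\al}_n$, together with $\E[f^{h,\al}_{nh}(\chi^{h,\al}_n)\mid\chi^{h,\al}_n]=-\nabla\cR(\chi^{h,\al}_n)$, makes the two first-order (order-$h$) terms \emph{identical}, so they cancel, and what remains is a local error of order $h^2$, bounded in terms of $\|\nabla^2\psi_{n+1}\|$ and $\E[\|f^{h,\al}_{nh}(\chi^{h,\al}_n)\|^2\mid\chi^{h,\al}_n]=\|\nabla\cR(\chi^{h,\al}_n)\|^2+\tr(\al_{nh}\Si(\chi^{h,\al}_n))$ (plus analogous contributions from the second-order remainder of $\Phi_h$ and from $\nabla\psi_{n+1}$). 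Summing the $N\le T/h$ local errors of order $h^2$ gives \eqref{eq:planWeak}.

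What makes \eqref{eq:planWeak} \emph{uniform}, and what I expect to be the only point requiring care, is that all supporting estimates depend on $\al$ solely through the pointwise bound $0\le\al_t\le1$: (i) bounds on $\Phi_t$ and its first two spatial derivatives for $t\in[0,T]$, from the regularity of $\nabla\cR$ via the variational equation and Grönwall; (ii) a priori moment bounds $\sup_h\sup_{\al\in A(L)}\sup_{n\le T/h}\E[\|\chi^{h,\al}_n\|^p]<\infty$, from the discrete Grönwall argument using the at most linear growth of $\nabla\cR$ and $\Si$ and $\al\le1$; and (iii) the identity $\E[\|f^{h,\al}_{nh}(\thet)\|^2]=\|\nabla\cR(\thet)\|^2+\tr(\al_{nh}\Si(\thet))$ with $\tr(\al_{nh}\Si(\thet))\le\tr\Si(\thet)$; the mild subtlety is controlling the random intermediate evaluation points of the Taylor remainders via (ii), the rest being bookkeeping under the standing assumptions. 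Note that the Lipschitz constant $L$ of $\sqrt\al$ plays no role here, so the estimate — and the proposition — in fact hold for all bounded measurable $\al$, with $C$ independent of $L$, in sharp contrast to the second-order theory of Section~\ref{sec:contTheory} where that regularity is essential. Alternatively, if one already has the first-order stochastic modified equation estimate of \cite{li_stochastic_2017} (weak error $O(h^2)$ of $\chi^{h,\al}$ to $dX^h_t=-\nabla\cR(X^h_t)\,dt+\sqrt{h\al_t\Si(X^h_t)}\,dW_t$), then \eqref{eq:planWeak} reduces to the observation that this SDE is an $O(h)$ weak perturbation of $X^0$, since the extra piece $\tfrac12 h\,\al_t\,\Si_{ij}\partial_i\partial_j$ of its generator is of order $h$ uniformly in $\al\le1$.
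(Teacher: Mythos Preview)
Your proof is correct and follows essentially the same strategy as the paper's proof sketch: both establish that gradient flow $X^0$ is a first-order weak approximation of $\chi^{h,\al}$ \emph{uniformly} in $\al\in A(L)$, and then conclude by the triangle inequality. The paper simply invokes the first-order analogue of its Theorem~\ref{thm:2ndOrderSME} machinery for this uniform bound, whereas you spell out a direct telescoping argument along the ODE flow; your additional observation that the bound is in fact independent of $L$ is a nice sharpening not made explicit in the paper.
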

\begin{proof}[Proof sketch]
Consider Theorem \ref{thm:2ndOrderSME}. A similar result shows that gradient flow
\[dX_t^0 = - \nabla \cR(X_t^0)\,dt\]
is a first-order approximation of SGD, i.e.\ there exists a $C>0$, such that
\[|\E \cR(\chi_{\floor{T/h}}^{h,\al^*}) - \E \cR(X_T^0)| \leq Ch,\]
for all $h\in (0,1)$. Moreover, this $C$ can be chosen independently of $\al \in A(L)$, and so, similarly to Corollary \ref{cor:2ndOrderSMEOptim},
\[|\inf_{\al\in A(L)} \E \cR(\chi_{\floor{T/h}}^{h,\al}) - \E \cR(X_T^0)| \leq \tilde Ch.\]
By the triangle inequality the result follows.
\end{proof}

\section{Main result}
Set $d = 1$.
Given a function $g : \R \to \R$ write $g\in \Lip^l$ if $g\in C^l$ and $\der^k g$ is Lipschitz, for all $k\in \set{0,\dots, l}$.

We make the following technical assumptions on $\cR$ and $\Si$.
\begin{assum}
\label{assum:riskAndGlobEx}
The function $\cR : \R \to \R$ is in $C^5$, $\cR'\in \Lip^4$, $\sqrt \Si \in \Lip^3$ and $\Si > 0$ everywhere.
 Further, the linear growth condition
\[|\cR'(\thet)| + |\sqrt{\Si(\thet)}| \lesssim 1 + |\thet|, \thet\in \R\]
holds. Finally,
\[|\cR(\thet)| \lesssim 1 + |\thet|^2, \thet \in \R,\]
and $\cR''(X_T^0) > 0$, where $X^0$ is gradient flow (cf.\ Equation \eqref{eq:gradFlow}).
\end{assum}
Since $\cR''$ is bounded, the product $\cR'' \cR'$ is Lipschitz and of linear growth as well.

\begin{assum}
\label{assum:fNice}
There exists a random variable $Z$ with finite moments, such that
\[|f_t^{h,\al}(\thet)| \leq Z(1 + |\thet|), a.s.,\]
for all $h\in [0,1], \text{Lipschitz continuous }\al : [0,T]\to [0,1], t\in [0,T]$ and $\thet\in \R$.
\end{assum}
Our main result provides an explicit relaxed solution of the optimal volatility control problem \eqref{eq:origDiffControlProb} in dimension $d = 1$.
\begin{satz}
\label{thm:optBS}
Assume \assref{assum:riskAndGlobEx} and \assref{assum:fNice} and consider fractional batch size SGD (equation \eqref{eq:fmbSGD}) with a fixed initial value $\chi_0 \in \R$.
Let $T > 0$ and consider the solution $X^0$ to the so called gradient flow ODE
\begin{equation}
\label{eq:gf1D}
d X_t^0 = - \cR'(X_t^0) \,dt, \quad X_0^0 = \chi_0,
\end{equation}
Set 
\[\be_{t,T}^1 = -\int_t^T \cR''(X^0_s)\,ds, \quad \be_{t,T}^2 = -\int_t^T \cR'''(X^0_s)\,ds,\]
\[\eta_{t,T} = \begin{cases}
	\frac{e^{-\be_{t,T}^1}-e^{-2\be_{t,T}^1}}{\be_{t,T}^1}, & \be_{t,T}^1 \neq 0,\\
	1, & \be_{t,T}^1 = 0,\end{cases}\]
\[\delt_{t,T} = e^{-2\be_{t,T}^1} \cR''(X^0_t) - \be_{t,T}^2 \eta_{t,T} \cR'(X^0_t) > 0,\]
and
\begin{equation}
\label{eq:optVolCont}
\al_t^*(\la) = \sqrt{\frac{2\la}{\delt_{t,T} \Si(X_t^{0})}} \wedge 1, \quad \la > 0,
\end{equation}
for all $t\in [0,T]$.
Then there exists a constant $\la > 0$, such that for all $L\geq \nrm{\sqrt{\al^*(\la)}}{\Lip}$, there exists constant $C > 0$, depending on $L$, with
\[|\min_{\al\in A(L)} \E\cR(\chi_{\floor{T/h}}^{h,\al}) - \E\cR(\chi_{\floor{T/h}}^{h,\al^*})| \leq Ch^2, \quad h\in (0,1).\]
\end{satz}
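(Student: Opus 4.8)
The plan is to pass from the discrete scheme to continuous time via the second-order stochastic modified equation, to \emph{linearise} the resulting control functional by a regular perturbation expansion in $\sqrt h$, and then to solve the associated isoperimetric problem \emph{exactly} by the Pontryagin maximum principle; the key point is that $\al^*(\la)$ will turn out to be the exact minimiser of the linearised problem, so that only the $\cO(h^2)$ truncation errors survive. Concretely, Theorem \ref{thm:2ndOrderSME} — applied with constants uniform over $\al\in A(L)$ — lets me replace $\E\cR(\chi_{\floor{T/h}}^{h,\al})$ by $\E\cR(X_T^{h,\al})$ up to $\cO(h^2)$, and (as in the passage around Corollary \ref{cor:2ndOrderSMEOptim}) the same replacement survives under $\min_{\al\in A(L)}$; here $X^{h,\al}$ solves the diffusion approximation $dX_t^{h,\al} = -\cR'(X_t^{h,\al})\,dt + \sqrt{h\al_t\Si(X_t^{h,\al})}\,dW_t$ (any $\cO(h)$ drift correction carried by the second-order SME is independent of $\al$ and only shifts the cost by an $\al$-independent constant). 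Substituting the expansion $X_t^{h,\al} = X_t^0 + \sqrt h\,Y_t^{(1)} + h\,Y_t^{(2)} + \cO(h^{3/2})$, with $Y^{(1)}$ the Ornstein--Uhlenbeck process $dY_t^{(1)} = -\cR''(X_t^0)Y_t^{(1)}\,dt + \sqrt{\al_t\Si(X_t^0)}\,dW_t$, $Y_0^{(1)}=0$, and $Y^{(2)}$ solving the induced linear SDE forced by $-\tfrac12\cR'''(X_t^0)(Y_t^{(1)})^2$, then Taylor-expanding $\cR$ about $X_T^0$ and using $\E Y^{(1)} = 0$ together with the $W\mapsto -W$ symmetry (which annihilates the $\cO(h^{3/2})$ term), I obtain
\[\E\cR(X_T^{h,\al}) = \cR(X_T^0) + h\,J(\al) + \cO(h^2), \qquad J(\al) := \tfrac12\cR''(X_T^0)\,V_T^{\al} + \cR'(X_T^0)\,E_T^{\al},\]
uniformly over $A(L)$, where $V_s^\al = \E[(Y_s^{(1)})^2]$ and $E_s^\al = \E[Y_s^{(2)}]$ solve the linear ODEs
\[\dot V_s^\al = -2\cR''(X_s^0)V_s^\al + \al_s\Si(X_s^0), \qquad \dot E_s^\al = -\tfrac12\cR'''(X_s^0)V_s^\al - \cR''(X_s^0)E_s^\al, \qquad V_0^\al=E_0^\al=0.\]
Solving these two ODEs by integrating factors and interchanging the order of integration shows that $J$ is in fact \emph{linear} in $\al$, $J(\al) = \int_0^T \psi(s)\,\al_s\,ds$ for an explicit continuous weight $\psi$ built from $X^0$, $\cR'$, $\cR''$, $\cR'''$ and $\Si$.

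I would then record that $A(L)$ is compact in $C([0,T])$: the constraint $\int_0^T \al_t^{-1}\,dt = c$ combined with $\nrm{\sqrt{\al}}{\Lip}\le L$ forces a uniform lower bound $\al_t \ge c_0(L,c,T) > 0$ on $[0,T]$ (were $\sqrt\al$ to vanish at some $t_0$, then $\sqrt{\al_t}\le L|t-t_0|$ nearby and $\al^{-1}$ would fail to be integrable), so equicontinuity and uniform boundedness, together with closedness of the three constraints under uniform limits, give compactness. In particular $\min_{A(L)}J$ is attained, and $\al^*(\la)\in A(L)$ whenever $L\ge\nrm{\sqrt{\al^*(\la)}}{\Lip}$; hence it suffices to exhibit $\la>0$ for which $\al^*(\la)$ \emph{minimises} $J$ over $A(L)$. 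Once that is in place, combining $\E\cR(\chi_{\floor{T/h}}^{h,\al^*}) = \cR(X_T^0) + hJ(\al^*) + \cO(h^2)$ with the uniform expansion $\min_{\al\in A(L)}\E\cR(\chi_{\floor{T/h}}^{h,\al}) = \cR(X_T^0) + h\min_{\al\in A(L)}J(\al) + \cO(h^2)$ yields exactly $|\min_{\al\in A(L)}\E\cR(\chi_{\floor{T/h}}^{h,\al}) - \E\cR(\chi_{\floor{T/h}}^{h,\al^*})|\le Ch^2$.

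To solve $\min_{A(L)} J$ I would compute the costate/adjoint of the $(V^\al,E^\al)$ dynamics — equivalently, run the Pontryagin maximum principle — obtaining $\dot p_2 = \cR''(X_s^0)p_2$ and $\dot p_1 = 2\cR''(X_s^0)p_1 + \tfrac12\cR'''(X_s^0)p_2$ with terminal data $p_1(T)=\tfrac12\cR''(X_T^0)$, $p_2(T)=\cR'(X_T^0)$ read off from $J$. Solving these explicitly, the integrating factors produce the $e^{\be_{\cdot,T}^1}$-factors and the nested integration produces $\eta_{t,T}$ together with its removable singularity at $\be_{t,T}^1=0$, so that the weight is identified as $\psi(t) = \tfrac12\,\delt_{t,T}\,\Si(X_t^0)$, with $\be^1,\be^2,\delt$ as in the statement (using also the flow identity $\cR'(X_t^0) = \cR'(X_T^0)e^{-\be_{t,T}^1}$ to re-express terminal-time quantities). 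Relaxing the isoperimetric constraint with a multiplier $\la>0$, a minimiser of $\int_0^T(\psi(s)\al_s + \la\,\al_s^{-1})\,ds$ over $\{0\le\al_s\le1\}$ is found by pointwise minimisation: $\al_t^*(\la) = \sqrt{\la/\psi(t)}\wedge1 = \sqrt{2\la/(\delt_{t,T}\Si(X_t^0))}\wedge1$. This requires $\psi>0$, i.e.\ $\delt_{t,T}>0$ — which I would verify from $\cR''(X_T^0)>0$ and the explicit form of $\delt_{t,T}$ — both to define $\al^*$ and to make the per-point problem convex. Choosing $\la$ so that $\al^*(\la)$ saturates the constraint, the standard Lagrangian estimate $J(\al)=\int_0^T\psi\al = \int_0^T(\psi\al+\la\al^{-1}) - \la c \ge \int_0^T(\psi\al^*+\la(\al^*)^{-1}) - \la c = J(\al^*)$ (valid for every feasible $\al$) shows $\al^*(\la)$ is the exact minimiser of $J$ over $A(L)$.

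It remains to check that a suitable $\la$ exists and that $\al^*(\la)$ is admissible: $\Lambda(\la):=\int_0^T\al_t^*(\la)^{-1}\,dt$ is continuous and non-increasing in $\la$, with $\Lambda(\la)\to\infty$ as $\la\downarrow0$ (since $\delt_{t,T}\Si(X_t^0)$ is bounded above on $[0,T]$) and $\Lambda(\la)=T$ for all sufficiently large $\la$, so as $c\ge T$ the intermediate value theorem gives $\la>0$ with $\Lambda(\la)=c$; for this $\la$, $\sqrt{\al^*(\la)}$ is Lipschitz on $[0,T]$ — using that $X^0$ is Lipschitz on the compact interval and that $\delt_{\cdot,T}$ and $\Si\circ X^0$ inherit Lipschitz regularity and positive upper and lower bounds from Assumption \assref{assum:riskAndGlobEx} — hence $\al^*(\la)\in A(L)$ for every $L\ge\nrm{\sqrt{\al^*(\la)}}{\Lip}$, closing the argument. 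I expect the main obstacle to be not any of the above but making the two error terms — the second-order SME error and the error from truncating the $\sqrt h$-expansion at the $h$-term — genuinely \emph{uniform over the infinite-dimensional family $A(L)$}: this forces one to carry the dependence on $\nrm{\sqrt{\al}}{\Lip}$ explicitly through every constant in Theorem \ref{thm:2ndOrderSME} and through the moment bounds of Assumption \assref{assum:fNice}, and the uniform positivity $\al\ge c_0(L,c,T)$ on $A(L)$ is exactly what makes this work. A secondary delicate point is the positivity $\delt_{t,T}>0$, without which $\al^*$ is undefined and the optimality argument collapses.
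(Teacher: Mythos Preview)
Your plan is essentially the paper's own argument: pass to the second-order SME (Theorem~\ref{thm:2ndOrderSME} and Corollary~\ref{cor:2ndOrderSMEOptim}), expand in $\sqrt h$ to reduce to the linear two-dimensional ODE system for $(V^\al,E^\al)$ (Corollary~\ref{cor:funOfPerturbSeriesOptim}), solve the resulting isoperimetric problem by Pontryagin with a Lagrange multiplier (Proposition~\ref{prop:alOpt}), and pick $\la$ by the intermediate value theorem. Your identification $\psi(t)=\tfrac12\delt_{t,T}\Si(X_t^0)$ via the adjoint system is exactly the paper's computation in Appendix~\ref{sec:optCont}.

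The one genuine gap is the positivity $\delt_{t,T}>0$. You flag it as ``a secondary delicate point'' and propose to verify it ``from $\cR''(X_T^0)>0$ and the explicit form of $\delt_{t,T}$'', but no direct algebraic argument from the formula is apparent: the term $-\be_{t,T}^2\eta_{t,T}\cR'(X_T^0)$ has no sign in general. The paper's proof (inside Proposition~\ref{prop:alOpt}) is indirect and somewhat surprising: one first shows $\al^*$ is optimal on $[\tau_\ep,T]$ where $(Y_t)_1\ge\ep$, then uses the smoothness of the value function $V$ together with the identification $Y_t=\nabla_\mu V(t,\mu)$ (Theorem~6.4.7 in \cite{pham2009continuous}), and finally bounds $\partial_{\mu_1}V(t,\mu)\ge\tfrac12\cR''(X_T^0)e^{2\be_{t,T}^1}>0$ directly from the explicit solution of the $V^\al$-ODE. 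This forces $\tau_\ep=0$ for $\ep$ small enough, hence $(Y_t)_1=\tfrac12\delt_{t,T}>0$ on all of $[0,T]$. Without this step your pointwise minimisation of $a\mapsto\psi(t)a+\la a^{-1}$ is not justified and $\al^*$ need not even be well-defined.

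Two minor remarks. First, your compactness argument for $A(L)$ and the lower bound $\al\ge c_0(L,c,T)$ are not needed: the paper obtains uniformity in $\al$ directly from the bound $\nrm{\sqrt\al}{\Lip}\le L$ (cf.\ Remark~\ref{rem:simplAssum}), which suffices for all the moment and approximation constants, and the infima are compared via $|\inf f-\inf g|\le\sup|f-g|$ rather than by attainment. Second, your handling of the $\cO(h)$ drift correction $b^1=-\tfrac12\cR''\cR'$ is correct in spirit but slightly misleading as written: you must keep $b^1$ in the SME to invoke the $\cO(h^2)$ bound of Theorem~\ref{thm:2ndOrderSME}; its contribution then lands as an $\al$-independent additive forcing in the $E^\al$-equation and drops out only when you form the \emph{difference} $\min_\al J-J(\al^*)$.
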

Here $\wedge = \min$. The proof of Theorem \ref{thm:optBS} is postponed to Appendix \ref{sec:proofMain}.
 
\section{Continuous-time theory of mini-batch SGD}
\label{sec:contTheory}
The proof of Theorem \ref{thm:optBS} relies crucially on a continuous-time theory of SGD and results for relating discrete and continuous time.
There are three main steps to proving our main result: 
\begin{enumerate}[(i)]
	\item approximating SGD with a family of \tSDE s indexed by the learning rate,
	\item applying perturbation theory to the approximating family of \tSDE s, thereby expanding it again into a series with respect to the learning rate,
	\item stating and solving an optimal control problem for this series expansion.
\end{enumerate}
Finally, we transfer the solution to the latter optimal control problem back to the discrete SGD process. In this section we briefly sketch these ideas while details are referred to the Appendices.

\subsection{Diffusion approximation}
Denote by $\nabla^2 f$ the Hessian matrix of a function $f\in C^2(\R^d)$. 
Set $b^0 := - \nabla \cR$ and $b^1 := - \frac 14 \nabla | \nabla \cR|^2$. 
Roughly following Li et. al \cite{li_stochastic_2019}, the dynamics of \eqref{eq:fmbSGD} can be approximated by the $h$-indexed family of \tSDE s 
\begin{equation}
	\label{eq:mbSME}
	dX_t^h = b^0(X_t^h) + h b^1(X_t^h) \,dt + \sqrt{h \al_t \Si(X_t^h)}\,dW_t,
\end{equation}
We also denote the solution of \eqref{eq:mbSME} for a given volatility control $\al$ and $h\in (0,1)$ by $X^{h,\al}$.

We refer to Equation \eqref{eq:mbSME} as a \emph{weak second-order diffusion approximation} of \eqref{eq:mbSGD}, since, under reasonable conditions, for all $T > 0$ there exists a $C > 0$, such that for all smooth $g : \R^d \to \R$ with derivatives of at most polynomial growth, we have
\begin{equation}
	\label{eq:weakErr2}
	\max_{n\in \set{0,\dots, \floor{T/h}}} |\E[g(\chi_n^h)] - \E[g(X_{nh}^h)]| \leq Ch^2,
\end{equation}
for all $h\in (0,1)$, given that the diffusion approximation and SGD have the same starting point, that is $X_0^h = \chi_0$.

In contrast to this diffusion approximation, in the literature on SGD one commonly considers the gradient flow ODE 
\begin{equation}
	\label{eq:gradFlow}
	dX_t^0 = -\nabla \cR(X_t^0)\,dt
\end{equation}
as a continuous-time version of SGD. This is not sufficient for an analysis of batch sizes, since the dynamics only depend on the mean of the sampled gradients. On the other hand, batch sizes only appear in the covariance matrix of the gradient noise, which is why we consider the stochastic dynamics \eqref{eq:mbSME} instead. Putting that aside, the approximation quality of \eqref{eq:gradFlow} is worse compared to \eqref{eq:mbSME} since it is merely of first-order, i.e.\ for all $T > 0$ there exists a $C > 0$, such that for all smooth $g : \R^d \to \R$ with derivatives of at most polynomial growth, we have
\begin{equation}
	\label{eq:weakErrGF}
	\max_{n\in \set{0,\dots, \floor{T/h}}} |\E[g(\chi_n^h)] - \E[g(X_{nh}^0)]| \leq Ch,
\end{equation}
for all $h\in (0,1)$, given that $X_0^0 = \chi_0$.

Under reasonable conditions we can make the constant $C$ in \eqref{eq:weakErr2} independent on the choice of volatility control. This allows us, in some sense, to replace the discrete time control problem \ref{eq:origDiffControlProb} with the continuous-time control problem
\begin{equation}
	\label{eq:contTimeDiffControlProb}
	\amin_{\al\in A(L)}\E[\cR(X_T^{h,\al})],
\end{equation}
so that we can use tools from stochastic calculus and continuous-time optimal control. Details for this transfer from discrete to continuous time are deferred to Appendix \ref{sec:proofSME2}.

Unfortunately, Problem \eqref{eq:contTimeDiffControlProb} is still too difficult to be solved explicitly, primarily because of the covariance matrix $\Si$. For example, even in one-dimensional linear regression tasks $\Si$ is already a quadratic polynomial and there is generally no hope that $\Si$ simplifies, say, to a constant.

To rectify this issue, in the subsection we introduce an expansion of \eqref{eq:mbSME} with respect to the learning rate.

\subsection{Expansions in the learning rate}
Consider again the approximation result \eqref{eq:weakErr2}. Based on this we can approximate the risks
\[|\E \cR(\chi_n^h) - \E \cR(X_{nh}^h)| = \cO(h^2).\]
However, if we expand the risk of the diffusion approximation into a Taylor series with respect to the learning rate $h$ as follows
\[\cR(X_t^h) = \cR^{(0)}_t + h \cR^{(1)}_t + \cO(h^2),\]
then all terms beyond $h^2$ are not known to contribute (positively or negatively) to the approximation error in \eqref{eq:weakErr2}.
Therefore, in order to find optimal batch sizes for \eqref{eq:mbSGD} we do not lose any accuracy if we change \eqref{eq:contTimeDiffControlProb} such that we minimize
\[\E[\cR^{(0)}_t + h \cR^{(1)}_t]\]
instead.

We can find $R^{(q)}$ by also considering a series expansion for the diffusion approximation
\begin{equation}
	\label{eq:Xexpans}
	X_t^h = X^0_t +  \sqrt h X^{(1/2)}_t + h X^{(1)}_t + h^{3/2} X^{(3/2)}_t + \cO(h^2)
\end{equation}
Then one can derive a system of \tSDE s for \\$X^0, X^{(1/2)},\dots$ which is in a triangular form and such that the equations for $X^{(1/2)}, X^{(1)}$ and $X^{(3/2)}$ are linear, given $X^0$.

Given the expansion \eqref{eq:Xexpans}, one can show that for $\cR \in C^2(\R)$ we have
\begin{equation}
	\label{eq:riskExp}
	\E[\cR(X^h)] = \cR(X^0) + h \left(\frac12 \cR''(X^0)\Var[X^{(1/2)}] + \cR'(X^0)\E[X^{(1)}]\right) \\
	+ \cO(h^2),
\end{equation}
conditional on the initial condition $X_0 = \chi_0$. Here, $X^0$ is gradient flow, as in equation \eqref{eq:gradFlow}. Note that the process $X^{(3/2)}$ introduced in \eqref{eq:Xexpans} plays no role in the expansion of the expected risk. Further, we have
\begin{align}
	d\Var[X^{(1/2)}_t] = & 2 \cR''(X^0_t) \Var[X^{(1/2)}_t] + \al_t \Si(X_t^0) \,dt, \label{eq:varx12R}\\
	d\E[X_t^{(1)}] = & \frac12  \cR'''(X^0_t) \Var[X^{(1/2)}_t] +  \cR''(X^0_t) \E[X_t^{(1)}] + b^1(X_t^0) \,dt \label{eq:ex1R},
\end{align}
In essence, in \eqref{eq:riskExp}, we are correcting the mean risk of gradient flow by terms depending on the learning rate $h$, the randomness inherent to SGD and the fact that even deterministic gradient descent with finite learning rate essentially optimizes the modified objective
\[\cR + \frac h4 | \nabla \cR|^2,\]
which is evident from the drift coefficient in equation \eqref{eq:mbSME}.

Since gradient flow does not depend on the volatility control, our problem simplifies to
\begin{equation}
	\label{eq:diffControProbO32}
	\amin_{\al\in A(L)} \frac12 \cR''(X^0_T)\Var[X^{(1/2), \al}_T] + \cR'(X^0_T)\E[X^{(1), \al}_T],
\end{equation}
where we indicated the dependence of $X^{(1/2)}$ and $X^{(1)}$ on $\al$.

\subsection{Batch size control}
In order to solve \eqref{eq:diffControProbO32} we take a look at the Lagrange dual problem, i.e.\ for $\la > 0$ we consider 
\begin{equation}
\label{eq:diffControProbO2Risk}
	\amin_{\al\in A'(L)}  \frac12 \cR''(X_T^0) \Var[X^{(1/2), \al}_T] + \cR'(X_T^0)\E[X_T^{(1), \al}] + \la \int_0^T \frac{1}{\al_t}\,dt,
\end{equation}
where $\Var[X^{(1/2),\al}]$ and $\E[X^{(1),\al}]$ satisfy \eqref{eq:varx12R} and \eqref{eq:ex1R}, respectively, and 
\[A'(L) = \set{\al : [0,T]\to [0,1] : \nrm{\sqrt{\al}}{\Lip} \leq L}.\]
If $\al^*(\la)$ is a solution to \eqref{eq:diffControProbO2Risk} and there exists a $\la > 0$ with 
\begin{equation}
\label{eq:lambdaC}
\int_0^T \frac{1}{\al_t^*(\la)}\,dt = c,
\end{equation}
then $\al^*(\la)$ solves the primal problem \eqref{eq:diffControProbO32}. 

To solve \eqref{eq:diffControProbO2Risk}, we apply the Pontryagin maximum principle (cf.\ \cite{pham2009continuous} Chapter 6.4 for more details on the maximum principle) to the two-dimensional system of linear equations, \eqref{eq:varx12R} and \eqref{eq:ex1R}. This is relatively straightforward and yields the optimal volatility control \eqref{eq:optVolCont}. Details can be found Appendix \ref{sec:optCont}.

\section{Optimal batch sizes for linear regression}
In this section we apply Theorem \ref{thm:optBS} to the problem of linear regression with mini-batch SGD.

\subsection{The statistical learning setting}
\newcommand{\bep}{\bm{\varepsilon}}
Suppose we are given random variables $\bx$ and $\bep$ defined on a probability space $(\Om, \cF, \P)$, such that $\bx$ and $\bep$ are independent, $\E \bep = 0, \si_\ep^2 := \E \bep^2 < \infty$ and $\E\bx^4 < \infty$.
Let $\thet^* \in \R^d$. We define the $\R$-valued random variable $\by$ by
\[\by = \thet^* \bx + \bep.\]
Denote the distribution of $(\bx, \by)$ by $\nu$. We call $\nu$ the \emph{population}. We consider applying SGD to a sequence of \tiid data points $(\bx_0, \by_0), (\bx_1, \by_2),\dots$, drawn from $\nu$, which follows a linear model.
The population is considered unknown to us.

Let $\ell$ be the \emph{square loss}, given by $\ell(y,y') = \frac12(y-y')^2$. The goal is to fit the data drawn from $\nu$ using a linear predictor $\thet \mapsto  \thet x$.
Thus, for any data point $(x,y) \in \R\times \R$ we consider the squared risk
\[R_{x,y}(\thet) = \ell(\thet x, y) = \frac12 (\thet x - y)^2.\]
We define the \emph{population risk} by
\[\cR(\thet) := \E[R_{\bx,\by}(\thet)].\]
We stress that the bold letters $\bx, \by$ denote random variables, while $x,y$ represent realizations.
The minimum of $\cR$,  i.e.\ the best possible fit, is given by the population parameter $\thet^*$.

Then, we have
\begin{gather*}
	\cR(\thet) 	= \frac12 \ka(\thet-\thet^*)^2+ \cR^*, \quad
	\cR'(\thet) 	=  \ka(\thet - \thet^*), \quad
	\cR''(\thet) =  \ka,
\end{gather*}
where $\ka := \Var \bx$ and $\cR^* := \inf_{\theta\in \R} \cR(\thet) = \frac{\si_\ep^2}{2}$ is the smallest possible population risk. Further,
\[\Si(\thet) =  \Var[\der_\thet \ell(\thet \bx,\by)] = \ka^2(\Kurt \bx - 1)(\thet- \thet^*)^2 + 2 \ka \cR^*,\]
where $\Kurt(\bx) := \E[\bx^4]/\ka^2$ is the \emph{kurtosis} of $\bx$. Note that, e.g., $\Kurt \bx = 3$ if $\bx \sim \cN(0,\ka)$.

\subsection{Optimal volatility}
Consider Theorem \ref{thm:optBS}, now in the case of linear regression as outlined in the previous subsection.
Gradient flow satisfies
\[dX^0_t = - \ka (X^0_t - \thet^*)\,dt, X_0^0 = \chi_0.\]
and so \[X^0_t = \left(\chi_0 - \thet^*\right)e^{-\ka t} + \thet^*.\]
Define the \emph{excess population risk} $\cR^e = \cR - \cR^*$ and the \emph{initial excess population risk} $\cR^e_0 = \cR(\chi_0) - \cR^*$.
Then the excess population risk of gradient flow at time $t$ satisfies $\cR^e(X_t^0) = \cR_0^e e^{-2\ka t}$.
Thus,
\[\Si(X_t^0) = 2\ka((\Kurt \bx - 1) \cR_0^e e^{-2\ka t} + \cR^*).\]
Coming back to the solution of the control problem given by Theorem \ref{thm:optBS}, we have $\be_{t,T}^1 = - \ka (T-t)$ and $\be_{t,T}^2 = 0$. Hence, 
\[\delt_{t,T} = e^{-2\be_{t,T}^1} \cR''(X_T^0) = \ka  e^{-2\ka(T-t)},\]
and the optimal volatility control is
\begin{align}
	t \mapsto &\sqrt{\frac{\la}{\ka^2 e^{-2\ka(T-t)}((\Kurt \bx - 1) \cR_0^e e^{-2\ka t} + \cR^*)}} \wedge 1. \nonumber
\end{align}
After a linear re-parameterization and setting $\ga := \frac{R_0^e}{R^*}(\Kurt \bx - 1)$, we have
\begin{equation}
\label{eq:optVolLinReg}
\al_t^*(\la) = \sqrt{\frac{\la}{\ga + e^{2\ka t}}} \wedge 1, \quad t\in [0,T], \la > 0.
\end{equation}
For $\la > 0$ such that \eqref{eq:lambdaC} is satisfied, $\al^*(\la)$ is the optimal volatility control for the linear regression problem. Figure \ref{fig:optvolschedla300ga280} shows $\al^*$ for different values of $\la$ and $\ga$.
In the case that the upper bound of $1$ is never attained, $\la$ can be calculated explicitly (cf.\ Appendix \ref{sec:propOfOptVolLinReg}).
\begin{figure}
	\centering
	\fbox{\includegraphics[width=0.45\linewidth]{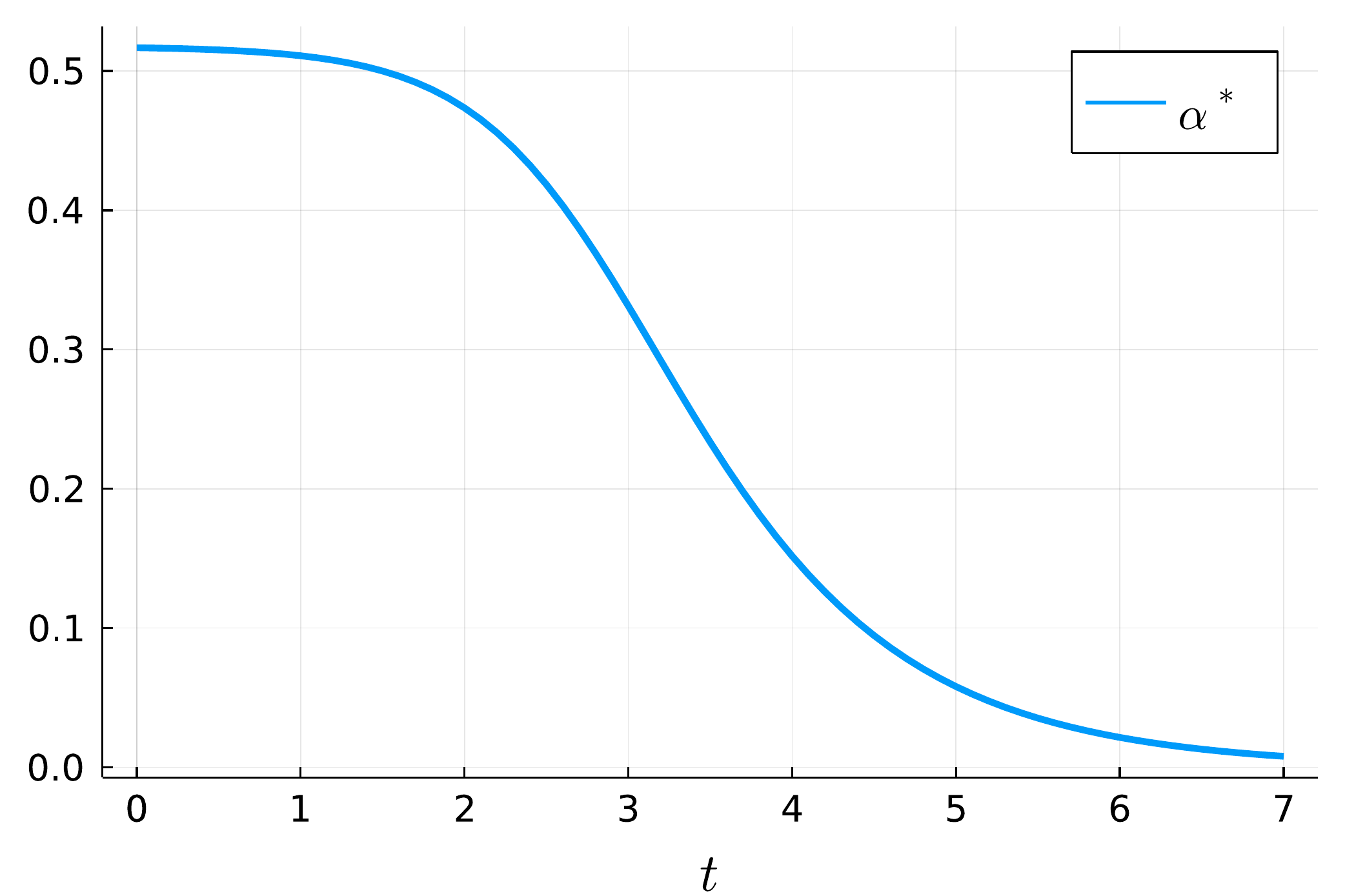}\hspace*{0.05\linewidth}\includegraphics[width=0.45\linewidth]{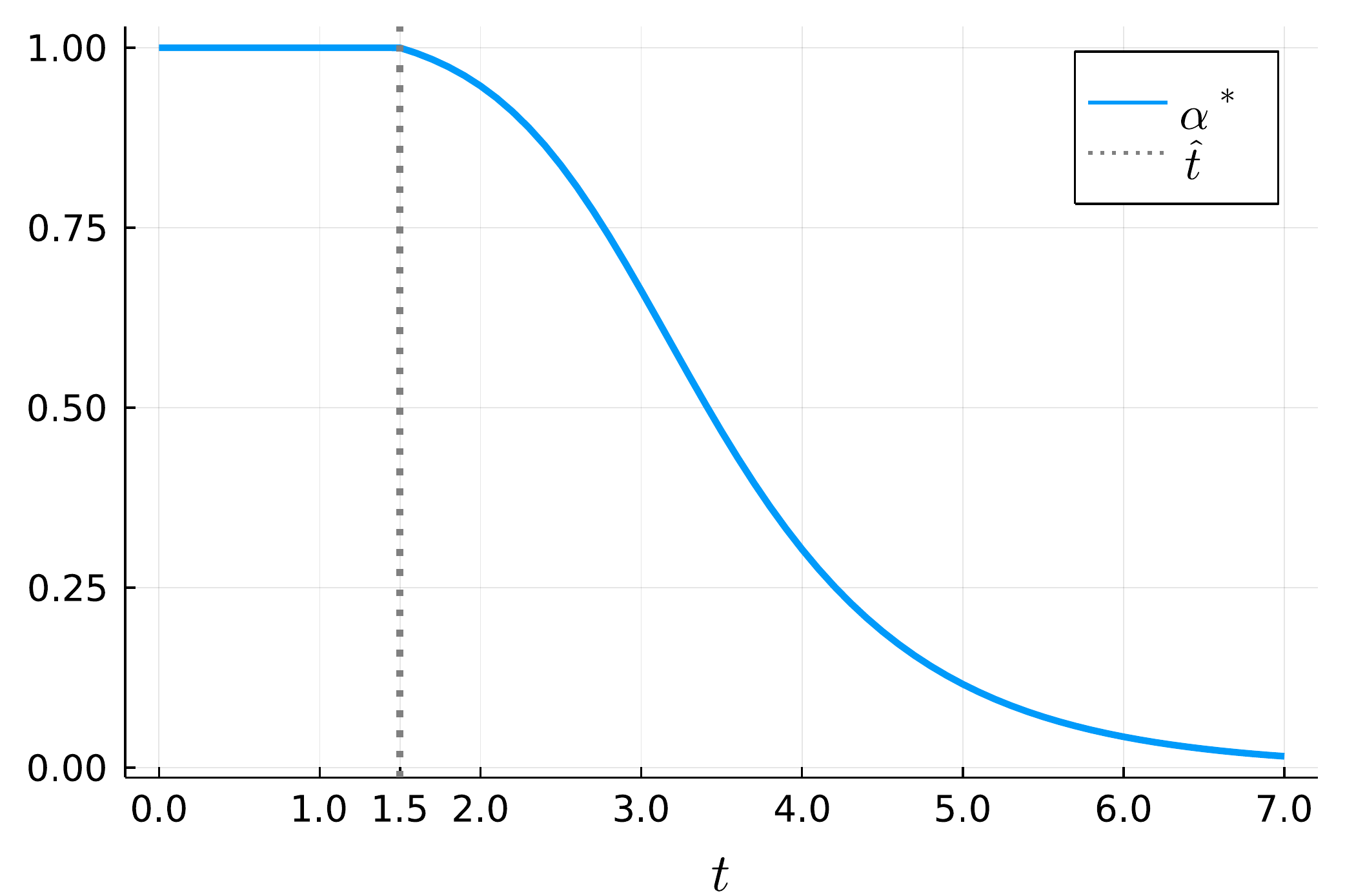}}
	\caption{Optimal volatility control $\al^*$ for linear regression, with $\la = 75$ (left) / $300$ (right), $\ga = 280$ and $\ka = 1$. On the right, the time point $\hat t \approx 1.5$, where volatility switches away from $1$ is indicated by the dotted, vertical line. On the left, we have $\al^* < 1$ everywhere.}
	\label{fig:optvolschedla300ga280}
\end{figure}
Note that the optimal volatility control $\al^*$ in \eqref{eq:optVolLinReg} is non-increasing. 
Hence, for every $\la > 0$ there exists a unique $\check t(\la) \in [0,T]$ with $\al_t^*(\la) < 1$ for all $t \in [\check t, T]$. In fact, we have 
\[\frac{\la}{\ga + e^{2\ka t}} = 1 \Leftrightarrow t = \frac{1}{2\ka} \ln(\la - \ga),\]
provided $\la - \ga \geq 1$.
Hence, the time point where we switch away from volatility $1$ is given by
\[\check t(\la) = \begin{cases}
	\frac{1}{2\ka} \ln( \la - \ga), & \la > \ga + 1, \\
	0, & \text{else}.
\end{cases}\]

\subsection{A numerical example}
In this subsection we use the optimal volatility control \eqref{eq:optVolLinReg} for numerically estimating the true parameter $\thet^*$ in a linear regression problem, using mini batch SGD. Experimental details are deferred to Appendix \ref{sec:experiment}.

Figures \ref{fig:batches256-lr0} depict the results of two runs of the experiment for different parameter values.
As expected, increasing the batch size leads to lower population risk at the end of training. In the examples, the difference to using constant batch size can be more than one order of magnitude. Also, in Figure \ref{fig:batches256-lr0} we see that, additionally, in the early stages of training, we can use lower batch sizes than the constant schedule for significantly faster convergence, in terms of samples processed.
It should be pointed out that the effects of optimized batch schedules are more prominent fo
longer training times, since there is a greater range of batch sizes one can use. Conversely, if we have too few iterations, then the \enquote{optimal} and constant schedules coincide.

\begin{figure}
	\centering
	\fbox{\includegraphics[width=0.45\linewidth]{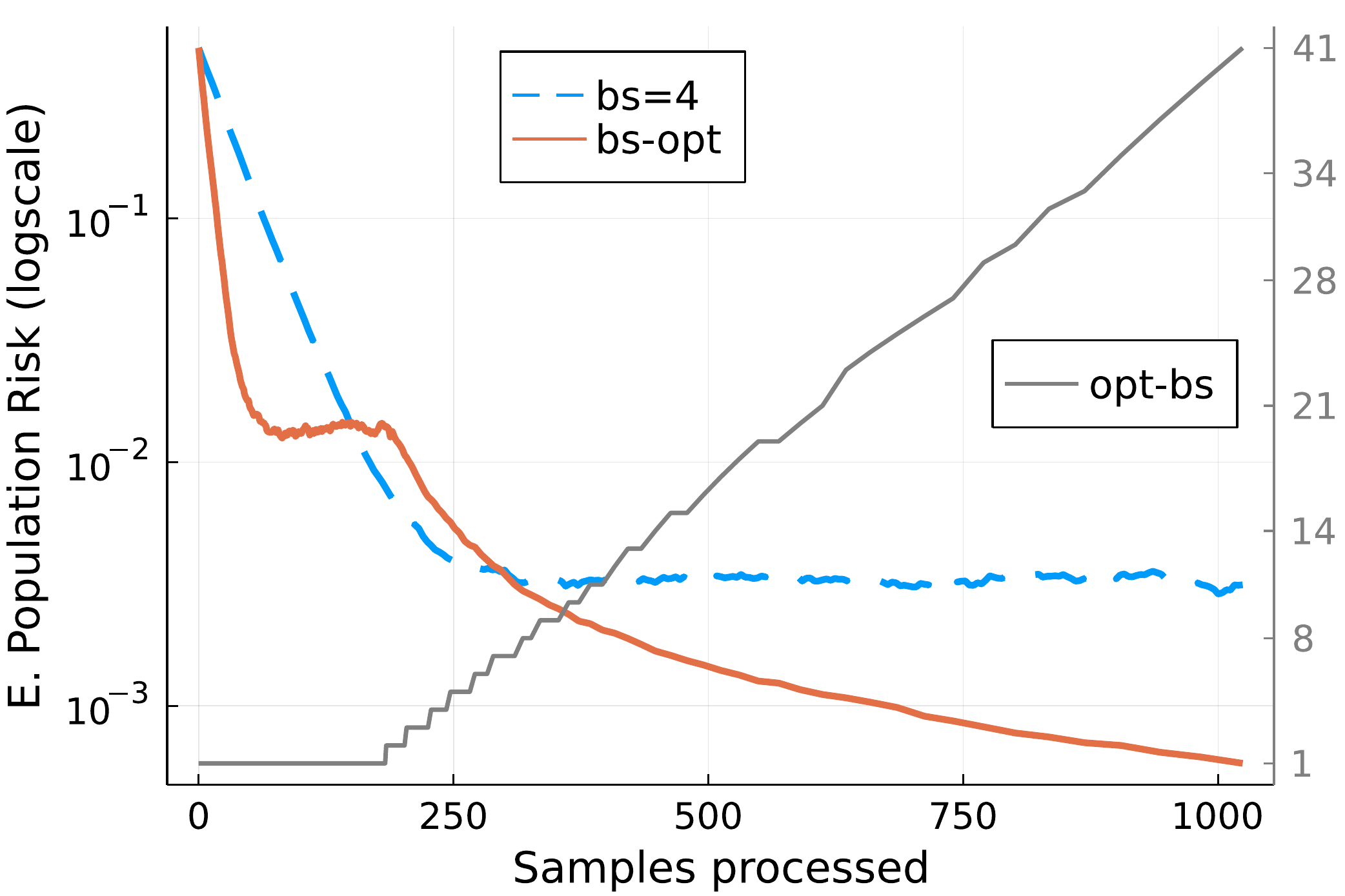}
		\hspace*{0.05\linewidth}
	\includegraphics[width=0.45\linewidth]{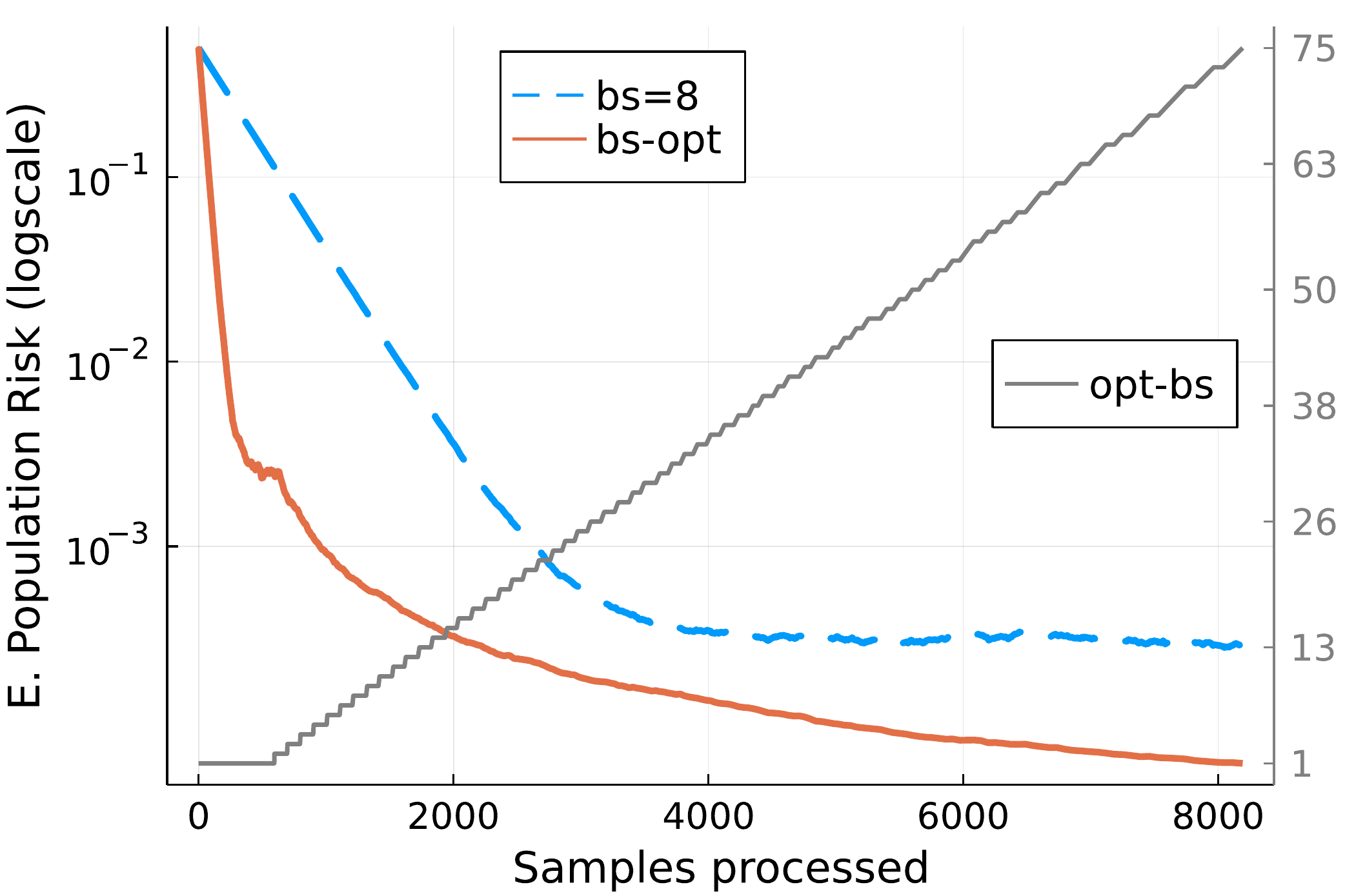}}
	\caption{Excess population risk of mini batch SGD as a function of the number of samples processed, averaged over $1000$ instances each, with constant batch size of $4$ (left) / $8$ (right) and using an \enquote{optimal} batch size schedule (bs-opt). Here, the sample size is $N = 2^{10}$ / $2^{13}$, the number of steps, i.e.\ batches, is $M = 256$ / $2^{10}$, and the learning rate is $h = 0.05$ / $0.01$. The right $y$-axis specifies the number of samples used by bs-opt.}
	\label{fig:batches256-lr0}
\end{figure}

\section{Limitations}
There are several limitations to the main result Theorem \ref{thm:optBS}. 

Firstly, the dimension is fixed to $1$. However, we suspect that the behavior of our quasi-optimal volatility control also yield great benefits in higher dimensions. A large portion of the theory could in principle be developed in higher dimensions. Unfortunately, in this case the optimal control problem \eqref{eq:contTimeDiffControlProb} cannot be reduced to a problem of controlling a system of \tODE s. Instead, one needs to consider systems of non-linear fully-coupled forward backward \tSDE s and resort to numerical methods for computing the optimal control. Solving high-dimensional non-linear FBSDEs is again a difficult problem, which requires using deep learning techniques (cf. \cite{ji_three_2020}). That makes it prohibitively expensive to use such a method in practice. Alternatively, one could study a continuous-time mean-field approximation of SGD applied to high-dimensional problems (cf.\ \cite{gess_stochastic_2023}).

Secondly, the optimal volatility control depends on the gradient flow solution, which generally cannot be derived explicitly. Moreover, it would be more natural for the optimal control to be Markov, i.e.\ a function of the current parameter iterate $\chi_n$. However, this would require developing sophisticated approximation results, based on causal optimal transport, that allow for the transfer of stochastic optimal controls (cf. \cite{acciaio_causal_2017}).

Thirdly, our results only apply to the fictitious fractional batch size SGD. If we round our optimal schedule in any way, then the optimality result would only hold up to a term of order $1$ in the learning rate, which is crude. Extending diffusion approximations to allow for discontinuous volatility controls is a difficult issue and would likely change the approximating equations to feature local times, since we have to resort to using the \tIto-Tanaka formula when deriving the stochastic Taylor approximations (cf.\ Proposition \ref{prop:itoTaylor2} in Appendix \ref{sec:proofSME2}).

Fourthly, several quantities featured in the optimal volatility schedule are difficult to compute or estimate in practice. This includes the integrals $\int_t^T$ looking forward in time, the Lagrange mutliplier $\lambda$, as well as population parameters, such as $\cR^*$ (cf. Equation \ref{eq:optVolLinReg}).

Finally, the assumptions on the coefficient of the diffusion approximation \assref{assum:riskAndGlobEx} are technical, restrictive and sometimes violated in examples. Lipschitz and linear growth conditions are standard in the \tSDE{} literature to ensure existence and uniqueness of global solutions, but they can be significantly relaxed, possibly even to the point of considering weak solutions. The smoothness and boundedness of the derivatives of the coefficients is used mainly to derive a result on differentiation with respect to the initial condition.

\section{Related Work}
\paragraph{Batch size schedules}
In practice it is common to chose a constant batch size. However, it has been observed before that increasing batch size during training of neural networks can be beneficial (cf.\ \cite{smith_dont_2018}, \cite{friedlander_hybrid_2012}, \cite{byrd_sample_2012}, \cite{balles_coupling_2017}, \cite{bottou_optimization_2018}, \cite{de_automated_2017}). The batch size schedules derived in these works are based on useful heuristics. In contrast, we use optimal control theory for deriving a theoretically (quasi-) optimal schedule. While some of these works emphasize an equivalence of increasing batch size and decreasing learning rate, our theory breaks this symmetry, by using the (maximal) learning rate $h$ for development of the continuous-time approximation. Further, we remark that learning rate \emph{schedules} affect the dynamics of gradient flow, while the batch size, which only affects volatility, does not.

Finally, the idea of deriving optimal batch size schedules using diffusion approximations was also studied by Zhao et.\ al in \cite{zhao2022batch}, which we were unaware of at the time of writing this paper. One of the great the strengths of their paper is that they derive their schedule in higher dimensions, which increases its applicability significantly compared to our work.
However, we still feel our article has several theoretical strengths over \cite{zhao2022batch}:
\begin{enumerate}[(a)]
\item They assume throughout that their objective function is quadratic. This is e.g.\ the case for linear regression, which we also study in Section 4. However, our main Theorem \ref{thm:optBS} makes no such assumption and holds for quite general objective functions.
\item Equation (3) in \cite{zhao2022batch} is a first-order approximation of SGD and therefore of worse quality (i.e\ in having a non-zero linear error term) than the second-order approximation(s) we use. Specifically, it is a good approximation only for much smaller learning rates compared to the approximation we consider (because if, say, $h = 10^{-3}$, then already $h^2 = 10^{-6}$). In fact, gradient flow is also a first-order approximation of SGD which does not contain the batch size at all, but is still not known to be worse than (3). Therefore, up to an error of $h$, any batch size schedule (barring Lipschitz assumptions, etc.) is \enquote{optimal} for SGD. This is the content of Proposition \ref{prop:1stOrderFail}.
\item In Section 4 of \cite{zhao2022batch} $\Sigma$ is assumed to be constant. We went to great lengths to avoid this commonly made assumption, because it would reduce the quality of our approximation from second to first-order.
Instead we deal with state-dependent diffusion coefficients using the perturbation theory approach, retaining the second-order approximation quality.
\item Theorem 4.2 in \cite{zhao2022batch} gives the optimal control for the SDE approximation, but does not say anything directly about SGD. In contrast, our main theorem pertains directly to (fractional batch size) SGD (see last inequality in Theorem \ref{thm:optBS}).
\end{enumerate}
In the future it would be interesting to see whether the methods of \cite{zhao2022batch} and our work can be combined to derive even better results.

\paragraph{Diffusion approximations}
Continuous-time diffusion approximations to SGD, also known as \emph{stochastic modified equations}, have been heuristically introduced in \cite{mandt_continuous-time_2015} and \cite{li_stochastic_2017}, and theoretically substantiated in \cite{li_stochastic_2019}. Since then numerous works have used diffusion approximations to study SGD (\cite{ali_implicit_2020}, \cite{an_stochastic_2020}, \cite{boffi_continuous-time_2020}, \cite{xie_diffusion_2021}, \cite{pesme_implicit_2021}, \cite{gu_sde_2021}, and others).
Further, \cite{li_stochastic_2017} was also the first work, to our knowledge, to use optimal control theory for hyperparameter tuning of SGD, by deriving an optimal learning rate control for a first-order diffusion approximation with constant diffusion coefficient. While we focus on batch size control, our work extends \cite{li_stochastic_2017} in several aspects: we establish a rigorous theory for transferring optimal controls from continuous-time theory back to discrete-time theory; we use the more accurate second-order diffusion approximation; we specifically allow for state-dependent diffusion coefficients. Further, we extend the theory in \cite{li_stochastic_2019} to allow generally for time-dependent drift and diffusion coefficients, e.g.\ learning rate and batch size schedules.

\section{Conclusion}
We have developed a continuous-time theory for calculating quasi-optimal hyperparameter schedules for stochastic gradient descent and similar stochastic one-step optimization methods, and demonstrated its usefulness by deriving a quasi-optimal batch size schedule for SGD and a large class of regression problems. Generalizing these results to allow for Markov controls, higher dimensions and more general assumptions on the drift and diffusion coefficients of the diffusion approximations, as well as the development of practically relevant algorithms, is left to future work.


\begin{alphasection}
\section{Preliminaries}	
In this section we introduce notation for the upcoming appendices, as well as some basic properties. 

We write $\N = \set{1,2,\dots}$ and $\N_0 = \set{0,1,\dots}$.
A (unordered) \emph{multi-index} $\al$ is a multi-subset of $\set{1,\dots, d}$, i.e.\ a function $\al : \set{1,\dots, d} \to \N_0$. The size $|\al|$ of $\al$ is given by
\[|\al| := \sum_{j=1}^d \al(j).\]
Every subset $A\subseteq \set{1,\dots, d}$ becomes a multi-set by identifying it with its indicator function. 
Given multi-indices $\al$ and $\be$ we write $\al \leq \be$ if $\al(j)\leq \be(j)$ for all $j\in \set{1,\dots, d}$ and in that case the multi-index $\be - \al$ is well defined, by component-wise subtraction. Further, write $j\in \al$ if $\set{j} \leq \al$ and set $\al - j := \al - \set{j}$ in that case.

If a function $f : \R^d \to \R$ is $l$-times continuously differentiable, then by Schwarz's theorem the partial derivative with respect to a multi-index $\al$ with $|\al| \leq l$ is well-defined recursively, by
\[\der^\al f = \der_j \der_{\al - j} f, \der_\varnothing f = f.\]
where $j$ is any $j\in \set{1,\dots, d}$ with $j\in \al$.
Given $x\in \R^d$ and a multi-index $\al$ we define
\[x^\al := \prod_{j=1}^d x_j^{\al(j)}.\]
We denote by $A^\dagger$ the transpose of a matrix $A$.

Fix $T > 0$, $d\in \N$ and let $B\in \set{\R^d,\R^{d\times d}}$.
Consider a function $g : D\to B$, where $D$ is a subset of Euclidean space, typically $D\in \set{[0,T], \R^d, [0,T]\times \R^d}$.

We write $g\in C^l(D)$ if the function $g$ is $l$-times continuously differentiable on the interior of $D$ and it and its derivatives up to order $l$ admit a continuous extension to $D$.

Define
\begin{align*}
\nrm{g}{G_\ka} := &\sup_{x\in D} \frac{|g(x)|}{1 + |x|^\ka}, \quad \ka \in \N_0, \quad \nrm{g}{\Lip} := \sup_{\stackrel{x,y\in D}{x\neq y}} \frac{|g(x) - g(y)|}{|x-y|}.
\end{align*}
Further, for $g\in C^l(D)$ we set
\begin{align*}
\nrm{g}{G^l_\ka} := &\max_{|\al|\leq l} \nrm{\der^\al g}{G_\ka},\quad \ka \in \N_0, \quad \nrm{g}{\Lip^l} := \max_{|\al|\leq l} \nrm{\der^\al g}{\Lip},
\end{align*}
where the maximum is taken over all multi-indices $\al : \set{1,\dots, d} \to \N$ with $|\al| = l$. Moreover, given that \[\ka = \inf\set{\ka \in \N_0 : \nrm{\der^\al g}{G_\ka} < \infty, |\al|\leq l} \in \N_0 \cup \set{\infty},\]
we set $\nrm{g}{G^l} := \nrm{g}{G_\ka^l}$. Note that $\nrm{\blnk}{E^l}$ is a norm on the vector space $E^l = \set{g\in C^l(D) : \nrm{g}{E^l} < \infty}$, for $E \in \set{G_\ka, G, \Lip}$. We write $G := G^0$. 

Now, consider specifically a function $g : [0,T]\times \R^d\to B$, depending on time and space. In this context, we denote time derivatives by $\der_t$, and iterated space derivatives by $\der^\al$, for any multi-index $\al$. We write $g\in C^{k,l}([0,T]\times \R^d)$ if $g$ is $k$-times partially differentiable on $(0,T)$ in time, and $l$-times in space, and $\der_t^m \der^\al g$ has a continuous extension to $[0,T]\times \R^d$, for all $m \leq k$ and $|\al|\leq l$. Further, we write $g\in G^{k,l}([0,T]\times \R^d)$ if $g\in C^{k,l}([0,T]\times \R^d)$ and $\der_t^m \der^\al g \in G([0,T]\times \R^d)$, for all $m \leq k$ and $|\al|\leq l$.
Also,we define 
\[\nrm{g}{\Lip^\tme} : \R^d\to [0,\infty], x\mapsto \nrm{g(x)}{\Lip}.\]
This special notation is created so that we may write $\nrm{g}{\Lip^\tme} \in G(\R^d)$.

Finally, if $I$ is a set and we are given $g : I\times D\to B$ with $g_i \in C^l(D)$ for all $i\in I$, then we write 
\[g_i\in E^l, \text{uniformly in } i\in I,\]
if $\sup_{i\in I} \nrm{g_i}{E^l} <\infty$, for $E \in \set{G_\ka, G, \Lip}$.

Now, let $X = (X_t)_{t\geq 0}$ be a continuous-time stochastic process. Given $p\in [1,\infty)$ we define
\[\nrm{X}{\Lip,p} = \sup_{0\leq s\leq t\leq T} \frac{\nrm{X_t - X_s}{p}}{t-s},\]
provided it exists. Similar to before, we also define $\nrm{X}{\Lip_p^\tme}$ if $X$ depends on $x\in \R^d$ as well.
Consider random fields $X,Y : \Om \times [0,T]\times \R^d \to \R^d$ with $\nrm{X}{\Lip_p^\tme}, \nrm{Y}{\Lip_p^\tme} \in G(\R^d)$. Then also $\nrm{X+Y}{\Lip_p^\tme} \in G(\R^d)$. Further, $\nrm{X_0}{p}, \nrm{Y_0}{p} \in G(\R^d)$ implies $\nrm{X_t}{p}, \nrm{Y_t}{p} \in G(\R^d)$, uniformly in $t$, and then $\nrm{XY}{\Lip^\tme_p} \in G(\R^d)$. Similar statements apply to functions $f,g : [0,T]\times \R^d\to \R^d$.
Given $p\in [1,\infty)$ and $t\geq 0$, we further define
\[\label{def:nrms}\nrm{X}{p,t} = \left(\E\int_0^t |X_s|^p\,ds\right)^{1/p}, \quad \nrmps{X}{p,t} = \left(\E \sup_{s\in [0,t]} |X_s|^p\right)^{1/p}\]
If we are given $(X_t)_{t\in [0,T]}$, then we also write $\nrm{X}{p} := \nrm{X}{p,T}$ and $\nrmps{X}{p} := \nrmps{X}{p,T}$.
Similarly, given discrete-time stochastic process $\chi$ we define
\[\nrmps{\chi}{p,n} = \left(\E \max_{n'\in \set{0,\dots, n}} |\chi_{n'}|^p\right)^{1/p}.\]
In the following we will frequently omit the domain from $C^l, G_\ka^l, G^l$ and $\Lip^l$. Further, if we write, say, $g\in G^3(\R^d)$ without explicitly specifying the codomain of $g$, then it is assumed to be $\R$.

We call a random field 
\[X : \Om \times [0,T]\times \R^d\to \R^d, (\om , t, x) \mapsto X_t(\om)(x)\] 
a solution to a \tSDE{}
\[dX_t = b_t(X_t)\,dt + \si_t(X_t)\,dW_t,\]
without explicit initial value, if $X(x)$ is a the solution to the \tSDE{}
\[dX_t(x) = b_t(X_t(x))\,dt + \si_t(X_t(x))\,dW_t, \quad X_0(x) = x,\]
for all $x\in \R^d$. Similarly, we treat the solution of a recursion
\[\chi_{n+1} = \chi_n + g_n(\chi_n)\]
as a random field $\chi : (\om, n, x) \mapsto \chi_n(\om)(x)$, with $\chi_0(x) = x$.

\section{Expansions in the learning rate}
\subsection{Heuristics}
We heuristically describe how to derive a series expansion of the form \eqref{eq:Xexpans}, as well as \eqref{eq:riskExp}. Details can be found in in the more general setting of Subsection \ref{sec:pertTheory}.
Let $T > 0$ and
\[b^0, b^1, S : [0,T]\times \R \to \R\] 
be measurable functions. We consider the general equation for a second-order diffusion approximation
\begin{equation}
	\label{eq:genericVolSME2}
	dX_t^h = (b^0_t + h b^1_t)(X_t^h)\,dt + \sqrt{h \al_t} S_t(X_t)\,dW_t,
\end{equation}
with $h\in [0,1)$.
We assume that \ref{eq:genericVolSME2} has a unique solution.

Let $g\in C^2(\R)$. We want to, for now heuristically, determine an expression for $\E g(X_t^h)$ using the expansion in \eqref{eq:Xexpans},
\[X^h = X^0 + \sqrt h X^{(1/2)} + h X^{(1)} + h^{3/2} X^{(3/2)} + \cO(h^2).\]
Using a Taylor approximation around the point $X^0$, we get
\begin{align}
	\label{eq:gXexpan}
	g(X^h) = g(X^0) &+ g'(X^0)(\sqrt h X^{(1/2)} + h X^{(1)} + h^{(3/2)} X^{(3/2)}) \nonumber \\
	&+ \frac12 g''(X^0) (\sqrt h X^{(1/2)} + h X^{(1)} + h^{(3/2)} X^{(3/2)})^2 \nonumber\\
	&+ \cO(h^2)\nonumber \\
	= g(X^0) & + \sqrt h g'(X^0) X^{(1/2)}\nonumber \\
	& + h \left(g'(X^0) X^{(1)} + \frac12 g''(X^0)(X^{(1/2)})^2\right)\nonumber \\
	& + h^{3/2}\left(g'(X^0)X^{(3/2)} + g''(X^0) X^{(1/2)}X^{(1)}\right)\nonumber \\
	& + \cO(h^2).
\end{align} 
We can apply the same formula to $b^0, b^1$ and $S$. Plugging the result into \eqref{eq:genericVolSME2}, we get
\begin{align*}
	&d(X^0 + \sqrt h X^{(1/2)} + h X^{(1)} + \cO(h^{(3/2)})) \\
	= & b^0(X^0) + \sqrt h \der b^0(X^0) X^{(1/2)} \\
	& + h \left(b^1{(X^0)} + \frac12 \der^2  b^0(X^0)(X^{(1/2)})^2 + \der b^0(X^0) X^{(1)}\right) + \cO(h^{3/2})\,dt \\
	& +  \sqrt{h \al} S(X^0) + h \sqrt\al \der S(X^0) X^{(1/2)} + \cO(h^{3/2})\,dW,
\end{align*}
where for simplicity we did not consider the $h^{3/2}$-terms.
Thus, by matching powers of $h^{1/2}$ on both sides of the equation, we have
\begin{align}
	\label{eq:XexpanSys}
	dX^0 = & b^0(X^0)\,dt, && X_0^{(0)} = X_0,\nonumber \\
	dX^{(1/2)} = &\der b^0(X^0) X^{(1/2)}\,dt + \sqrt\al S(X^0)\,dW,&& X^{(1/2)}_0 = 0, \nonumber \\
	dX^{(1)} = & b^1(X^0) + \frac12 \der^2 b^0(X^0)(X^{(1/2)})^2 + \der b^0(X^0)X^{(1)}\,dt \nonumber \\
	&+ \sqrt\al \der S(X^0) X^{(1/2)}\,dW, && X_0^{(1)} = 0.
\end{align}
Simplifying further, we heave $\E X^{(1/2)} = 0$ because $\int_0^\blnk \sqrt{\al_t} S(X_T^0)\,dW_t$ is a martingale. In similar fashion one can show that the expectation for the omitted component $X^{(3/2)}$ is zero everywhere.
Further, the quadratic covariation of $X^{(1/2)}$ and $X^{(1)}$ satisfies
\[[X^{(1/2)}, X^{(1)}]_t = \E\int_0^t \al_s S(X_s^{(0)})  \der S(X_s^{(0)}) X^{(1/2)}_s\,ds = 0,\]
and so $\Cov(X^{(1/2)}_t, X^{(1)}_t) = 0$, for all $t\geq 0$.

Moreover, \tIto's formula implies 
\begin{equation}
	\label{eq:X12squared}
	d(X^{(1/2)})^2 = 2\der b^0(X^0) (X^{(1/2)})^2 + \al S(X^0)^2\,dt + 2 X^{(1/2)} \sqrt{\al} S(X^0)\,dW.
\end{equation}
Applying expectation to \eqref{eq:XexpanSys} with the second equation replaced by \eqref{eq:X12squared} yields the system of \tODE s
\begin{align}
\label{eq:perturbSys}
	dX_t^0 = & b^0_t(X_t^0)\,dt, && X_0^{(0)} = X_0, \nonumber\\
	d\Var[X_t^{(1/2)}] = &  2 \der b^0_t(X_t^0)\Var[X_t^{(1/2)}] + \al_t(S(X^0_t))^2 \,dt, && \Var[X_0^{(1/2)}] = 0,\nonumber\\
	d\E[X_t^{(1)}] = & b^1_t(X^0_t) + \frac12 \der^2 b^0_t(X^0_t)\Var[X_t^{(1/2)}]\nonumber \\
	&+ \der b^0_t(X^0_t)\E[X^{(1)}_t]\,dt, && \E[X_0^{(1)}] = 0.
\end{align}
By applying expectation to \eqref{eq:gXexpan} we get
\begin{align}
	\label{eq:EgXh}
	\E[g(X^h)] = g(X^0) &+ h\left(\frac12 g''(X^0)\Var[X^{(1/2)}] +  g'(X^0)\E[X^{(1)}]\right) \nonumber \\ 
	&+ \cO(h^2),
\end{align}
since $\E[X^{(1/2)}] = \E[X^{(3/2)}] = \Cov(X^{(1/2)}, X^{(1)}) = 0$ and $X^0$ is deterministic.
Proposition \eqref{prop:perturbSysSMELike} in Section \ref{sec:pertTheory} shows that our derivation is indeed rigorous under reasonable conditions on the coefficients $b^0, b^1$ and $S$.

\subsection{Perturbation theory for stochastic differential equations}
\label{sec:pertTheory}
We develop a rigorous perturbation theory for \tSDE s depending on a small parameter, to simplify notation in dimension $d = 1$. The results are inspired by \cite{blagoveshchenskii_diffusion_1962}, but geared more towards our desired applications. 

Let $(\Om, \cF_\Om,\P)$ be a complete probability space, $\cF = (\cF_t)_{t\geq 0}$ be a filtration on $(\Om, \cF_\Om,\P)$, satisfying the usual conditions and $W$ be a $\R$-valued $\cF$-\tBm{}. Consider a family of \tSDE s indexed by a small parameter $\ep > 0$,
\begin{equation}
\label{eq:SDEsmallPar}
dY_t^\ep = b^\ep_t(Y_t^\ep)\,dt + \si^\ep_t(Y_t^\ep)\,dW_t,
\end{equation}
driven by $W$.
Our aim is to find random fields $Y^{(0)}, Y^{(1)}, Y^{(2)}, \dots$, such that
\[Y^\ep = Y^{(0)} + \ep Y^{(1)} + \ep^2 Y^{(2)} + \dots\]
Suppose we terminate the series at the level $l$, and we are given random fields
\[Y^{(k)} : \Om \times [0,T]\times \R \to \R, (\om,t,x)\mapsto Y^{(k)}_t(\om)(x),\]
for $k\in \set{0,\dots, l}$. 
We are interested in the remainder term
\[R^\ep := \frac{1}{\ep^{l+1}} \left(Y^\ep - \sum_{k=0}^l Y^{(k)} \ep^k\right).\]
We write $Y^{(\al)} := \prod_{k=1}^l Y^{(\al_k)}$ for every multi-index $\al : \set{0,\dots, l} \to \N_0$.
Note that the multinomial theorem implies
\begin{align*}
\left(\sum_{k=1}^l Y_t^{(k)}\ep^k\right)^n = &\sum_{\stackrel{\al_1,\dots, \al_l}{|\al|=n}} \binom{n}{\al} \prod_{k=1}^l \ep^{k\al_k} (Y_t^{k})^{\al_k} \\
= & \sum_{\stackrel{\al_1,\dots, \al_l}{|\al|=n}} \binom{n}{\al} \ep^{\sum_{k=1}^l k \al_k} Y_t^{(\al)} \\
= & \sum_{k=n}^{nl} Y_t^{(k,n)} \ep^k,
\end{align*}
where
\[Y^{(k,n)} := \sum_{\stackrel{\al_1,\dots, \al_l}{|\al|=n, \sum_j j \al_j = k}} \binom{n}{\al} Y^{(\al)},\]
for $n,k \in \N_0$.

Now, consider a function 
\[b : (0,1)\times [0,T] \times \R \to \R, (\ep, t, y) \mapsto b^\ep_t(y),\]
with $b_t \in C^{l+1}((0,1)\times \R)$ for all $t$.

Write
\[b^{(k)} := \frac{1}{k!} (\der^k_\ep b^\ep)_{|\ep = 0}\]
and
\begin{equation}
\label{eq:bYk}
(b(Y))^{(k)} =  \sum_{m+n\leq k} \frac{1}{n!}\der_y^n b^{(m)}(Y^{(0)}) Y^{(k-m,n)}.
\end{equation}
Note that $(b(Y))^{(k)}$ is the $k$-th coefficient if we expand $b^\ep(Y^\ep)$, or in fact also $b^\ep(Y^{(0)} + \ep Y^{(1)} + \dots \ep^l Y^{(l)})$, into a power series with respect to $\ep$, for any $k\leq l$.
\begin{lem}
\label{lem:bOfSeriesTaylor}
Let $b : (0,1)\times [0,T] \times \R \to \R$ be a function with $b_t \in C^{l+1}((0,1)\times \R)$ for all $t$. Write
\[Z_{n,m}^\ep := \sum_{k=n}^m Y^{(k,n)}\ep^k, \quad n\leq m \in \N\]
Then,
\begin{align*}
b^\ep\left(\sum_{k=0}^l Y^{(k)} \ep^k\right) = &\sum_{k=0}^l (b(Y))^{(k)} \ep^k +\sum_{m+n\leq l} \frac{1}{n!} \der_y^n b^{(m)}(Y^{(0)}) (Z_{n,nl}^\ep - Z_{n,l}^\ep)\ep^m \\
&+\sum_{k=0}^{l+1} \rho_k^\ep(Y^{(0)} + Z_{1,l}^\ep) Z_{(l+1)-k,l(l+1)-lk}^\ep \ep^k,
\end{align*}
where 
\[\rho_k^\ep(y) = \frac{l+1}{k!((l+1)-k)!} \int_0^1 (1-\xi)^l \der^k_\ep \der^{(l+1)-k}_y b^{\xi\ep}((1-\xi)Y^{(0)} + \xi y) \,d\xi.\]
\end{lem}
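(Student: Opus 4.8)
The plan is to prove the identity \emph{pointwise}: fix $\om\in\Om$, $t\in[0,T]$ and a spatial argument, so that $b:=b_t$ is a fixed $C^{l+1}$ function of $(\ep,y)$ and $Y^{(0)},\dots,Y^{(l)}$ are fixed reals; the formula is then a purely analytic (Taylor-type) identity obtained from one two-variable Taylor expansion followed by algebraic regrouping. Concretely, I would apply Taylor's theorem with integral remainder, to total order $l$, to the auxiliary function $g(a,c):=b^a(Y^{(0)}+c)$ at $(0,0)$ — the extension of $b$ to $\ep=0$ this uses is already built into the definition of the $b^{(k)}$. Since mixed partials commute and $\der_a^m\der_c^n g(0,0)=(\der_\ep^m\der_y^n b^\ep)(Y^{(0)})|_{\ep=0}=m!\,\der_y^n b^{(m)}(Y^{(0)})$, the Taylor polynomial is $\sum_{m+n\le l}\frac1{n!}\der_y^n b^{(m)}(Y^{(0)})\,a^m c^n$ and the remainder is the sum over $m+n=l+1$ of $\frac{l+1}{m!\,n!}\,a^m c^n\int_0^1(1-\xi)^l(\der_\ep^m\der_y^n b^{\xi a})(Y^{(0)}+\xi c)\,d\xi$.

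Next, substitute $a=\ep$ and $c=Z_{1,l}^\ep=\sum_{j=1}^l Y^{(j)}\ep^j$, so that $g(\ep,Z_{1,l}^\ep)=b^\ep(\sum_{k=0}^l Y^{(k)}\ep^k)$ and $Y^{(0)}+\xi c=(1-\xi)Y^{(0)}+\xi(Y^{(0)}+Z_{1,l}^\ep)$. Setting $k:=m$ (so $n=(l+1)-k$), the $(m,n)$-remainder term becomes exactly $\rho_k^\ep(Y^{(0)}+Z_{1,l}^\ep)\,\ep^k(Z_{1,l}^\ep)^{(l+1)-k}$, and the multinomial identity recorded above gives $(Z_{1,l}^\ep)^{(l+1)-k}=Z_{(l+1)-k,\,l(l+1)-lk}^\ep$; summing over $k=0,\dots,l+1$ reproduces the third sum of the statement.

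It then remains to show that the Taylor polynomial $\sum_{m+n\le l}\frac1{n!}\der_y^n b^{(m)}(Y^{(0)})\,\ep^m(Z_{1,l}^\ep)^n$ equals the first two sums. Using $(Z_{1,l}^\ep)^n=Z_{n,nl}^\ep=\sum_{k=n}^{nl}Y^{(k,n)}\ep^k$, split each summand into the part of total $\ep$-degree $\le l$ and the part of total $\ep$-degree $>l$. Collecting the low-degree parts by powers of $\ep$, the coefficient of $\ep^p$ with $p\le l$ is $\sum_{m+n\le p}\frac1{n!}\der_y^n b^{(m)}(Y^{(0)})\,Y^{(p-m,n)}$ — one uses that $Y^{(p-m,n)}$ is by definition the $\ep^{p-m}$-coefficient of $Z_{n,nl}^\ep$ and hence vanishes unless $n\le p-m$, which trims $m+n\le l$ to $m+n\le p$ — and this is exactly $(b(Y))^{(p)}$. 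So the low-degree parts assemble to $\sum_{k=0}^l(b(Y))^{(k)}\ep^k$, the first sum. The degree-$>l$ leftover is, for each $(m,n)$, $\frac1{n!}\der_y^n b^{(m)}(Y^{(0)})\,\ep^m$ times $Z_{n,nl}^\ep$ minus its degree-$\le l$ truncation, and summed over $m+n\le l$ this is the second sum; adding the three pieces gives the identity.

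The only substantive difficulty is the combinatorial bookkeeping in the last step: one has to track the simultaneous constraints $m+n\le l$, $n\le k\le nl$ and $k+m=p$ while regrouping powers of $\ep$, and match them against the definitions of $Y^{(k,n)}$ and of $(b(Y))^{(p)}$ (in particular checking that the truncation level implicit in the second sum is the one intended in the statement). A minor point is legitimizing the Taylor expansion at the endpoint $\ep=0$ of the domain $(0,1)$, which is fine since $b$ and its partials up to order $l+1$ extend to $\ep=0$ by hypothesis.
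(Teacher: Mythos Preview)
Your proposal is correct and follows essentially the same route as the paper: a two-variable Taylor expansion of $(\ep,y)\mapsto b^\ep(y)$ to total order $l$ about $(0,Y^{(0)})$, substitution of $y=Y^{(0)}+Z_{1,l}^\ep$, the multinomial identity $(Z_{1,l}^\ep)^n=Z_{n,nl}^\ep$ to rewrite both the remainder and the polynomial part, and finally the regrouping of the polynomial part into the first two sums. The paper's proof is exactly this, only phrased directly for $b$ rather than via your auxiliary $g(a,c)$; your flagging of the index bookkeeping (matching the truncation level in $Z_{n,nl}-Z_{n,l}$ against the constraints $m+n\le l$, $n\le k$, $k+m=p$) is precisely the step the paper handles tersely.
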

\begin{proof}
By applying Taylor's theorem to $b$ at the point $(\ep, Y^{(0)})$, we have
\begin{align*}
b^\ep(y) = \sum_{m+n\leq l} \frac{1}{n!} \der_y^n b^{(m)}(Y^{(0)}) (y - Y^{(0)})^n \ep^m + \sum_{k=0}^l \rho_k^\ep(y) (y - Y^{(0)})^{(l+1)-k} \ep^k, \quad y\in \R, \ep \in (0,1).
\end{align*}
Note that
\[(Z_{1,l})^n = Z_{n,nl} = Z_{n,l} + Z_{n,nl} - Z_{n,l},\]
and further
\begin{align*}
\sum_{m+n\leq l} \frac{1}{n!} \der_y^n b^{(m)}(Y^{(0)}) Z_{n,l} \ep^m = & \sum_{m+n\leq l} \sum_{q=n}^l \frac{1}{n!} \der_y^n b^{(m)}(Y^{(0)}) Y^{(q,n)}\ep^{m+q} \\
= & \sum_{k=0}^l \sum_{m+n\leq l} \frac{1}{n!} \der_y^n b^{(m)}(Y^{(0)}) Y^{(k-m,n)}\ep^{k}\\
= & \sum_{k=0}^l (b(Y))^{(k)} \ep^k
\end{align*}
Thus, setting $y := \sum_{k=0}^l Y^{(k)} \ep^k = Y^{(0)} + Z_{1,l}$ shows the result.
\end{proof}

\begin{rem}
\label{rem:computingPerturbSDEs}
Let us compute $(b(Y))^{(k)}$ for $k = 0,1,2,3$.
We have
\[Y^{(k,1)} = Y^{(k)}, Y^{(k,0)} = 0,\quad k \in \N_1.\]
Further,
\[Y^{(2,2)} = (Y^{(1)})^2, Y^{(3,2)} = 2 Y^{(1)} Y^{(2)}, Y^{(3,3)} = (Y^{(1)})^3.\]
Thus, we can write
\begin{align*}
(b(Y))^{(k)} = & \sum_{m+n\leq k} \frac{1}{n!} \der_y^n b^{(m)}(Y^{(0)}) Y^{(k-m,n)} \\
= &b^{(k)}(Y^{(0)}) + \sum_{m=0}^{k-1} \der_y b^{(m)}(Y^{(0)}) Y^{(k-m)} + \frac12 \der_y^2 b^{(k-2)}(Y^{(0)}) (Y^{(1)})^2 1_{[2,\infty)}(k) \\
&+ (\der_y^2 b^{(k-3)}(Y^{(0)}) Y^{(1)} Y^{(2)} +\frac16\der_y^3 b^{(k-3)}(Y^{(0)}) (Y^{(1)})^3) 1_{[3,\infty)}(k)\\
&+ \sum_{\stackrel{m+n\leq k}{m\leq k-4, n \geq 2}} \frac{1}{n!} \der_y^n b^{(m)}(Y^{(0)}) Y^{(k-m,n)} 1_{[4,\infty)}(k)
\end{align*}
In particular,
\begin{align*}
(b(Y))^{(0)} = &b^0(Y^{(0)}), \\
(b(Y))^{(1)} = &b^{(1)}(Y^{(0)}) +  \der_y b^{0}(Y^{(0)}) Y^{(1)},\\
(b(Y))^{(2)} = &b^{(2)}(Y^{(0)}) + \sum_{m=0}^1 \der_y b^{(m)}(Y^{(0)}) Y^{(2-m)} + \frac12 \der_y^2 b^{0}(Y^{(0)}) (Y^{(1)})^2, \\
(b(Y))^{(3)} = &b^{(3)}(Y^{(0)}) + \sum_{m=0}^{2} \der_y b^{(m)}(Y^{(0)}) Y^{(3-m)} + \frac12 \der_y^2 b^{(1)}(Y^{(0)}) (Y^{(1)})^2 \\
&+ \der_y^2 b^{0}(Y^{(0)}) Y^{(1)} Y^{(2)} +\frac16\der_y^3 b^{0}(Y^{(0)}) (Y^{(1)})^3.
\end{align*}
\end{rem}
\begin{prop}
\label{prop:composEpSeries}
Suppose we are given a function $b : (0,1) \times [0,T]\times \R \to \R$, with $b_t \in G^{l+1}((0,1)\times \R)$, uniformly in $t\in [0,T]$, and $b_t^\ep \in \Lip(\R)$, uniformly in $t\in [0,T]$ and $\ep \in (0,1)$.
Then there exist a multivariate polynomial $q\in \R[y_0,\dots, y_{l+1}]$ and a constant $C > 0$, such that
\[\frac{1}{\ep^{l+1}}\left|b^\ep(Y^\ep) - \sum_{k=0}^l (b(Y))^{(k)}\ep^k\right| \leq q(|Y^{(0)}|, \dots, |Y^{(l)}|, |Y^\ep|) + C |R^\ep|.\]
Further, the coefficients of $q$ and the constant $C$ depend only on, and are increasing functions of the $\Lip$- and $G^{l+1}$-norms of $b$.
\end{prop}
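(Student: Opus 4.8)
The plan is to insert the intermediate point $b^\ep(\sum_{k=0}^l Y^{(k)}\ep^k)$ and estimate the two resulting differences separately; by the triangle inequality it suffices, after dividing by $\ep^{l+1}$, to control $|b^\ep(Y^\ep) - b^\ep(\sum_{k=0}^l Y^{(k)}\ep^k)|$ and $|b^\ep(\sum_{k=0}^l Y^{(k)}\ep^k) - \sum_{k=0}^l (b(Y))^{(k)}\ep^k|$.

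For the first difference I would use only the global Lipschitz hypothesis. Since $Y^\ep - \sum_{k=0}^l Y^{(k)}\ep^k = \ep^{l+1}R^\ep$ by definition of $R^\ep$, and $C := \sup_{t\in[0,T],\,\ep\in(0,1)}\nrm{b_t^\ep}{\Lip} < \infty$, one gets $|b^\ep(Y^\ep) - b^\ep(\sum_{k=0}^l Y^{(k)}\ep^k)| \le C\,\ep^{l+1}|R^\ep|$, which after division by $\ep^{l+1}$ is exactly the term $C|R^\ep|$ in the claim. This is the only place the Lipschitz assumption is needed, and it is what makes the $R^\ep$-dependence appear with a constant coefficient rather than with a coefficient that is a polynomial in $|Y^\ep|$ (a mere polynomial-growth bound on $\der_y b$ would only yield $\tilde q(\cdots)|R^\ep|$).

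For the second difference I would quote Lemma \ref{lem:bOfSeriesTaylor} as it stands; it writes this difference as a finite sum of terms of two shapes, $\frac1{n!}\der_y^n b^{(m)}(Y^{(0)})\,(Z_{n,nl}^\ep - Z_{n,l}^\ep)\,\ep^m$ with $m+n\le l$, and $\rho_k^\ep(Y^{(0)}+Z_{1,l}^\ep)\,Z_{(l+1)-k,\,l(l+1)-lk}^\ep\,\ep^k$ with $0\le k\le l+1$. The crucial observation is that every such term carries a factor $\ep^{l+1}$: indeed $Z_{n,nl}^\ep - Z_{n,l}^\ep = \sum_{j=l+1}^{nl}Y^{(j,n)}\ep^j$, which vanishes for $n\le 1$, while $Z_{(l+1)-k,\,l(l+1)-lk}^\ep\,\ep^k = \sum_{j\ge(l+1)-k}Y^{(j,(l+1)-k)}\ep^{j+k}$ with $j+k\ge l+1$, so after pulling out $\ep^{l+1}$ the residual power of $\ep$ is nonnegative, hence at most $1$ since $\ep\in(0,1)$. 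The remaining factors I would bound by polynomials in $|Y^{(0)}|,\dots,|Y^{(l)}|$: each $|Y^{(j,n)}|$ is at most $\sum_\al\binom{n}{\al}\prod_i|Y^{(i)}|^{\al_i}$ since $Y^{(j,n)}$ is a multinomial combination of $Y^{(1)},\dots,Y^{(l)}$; each $|\der_y^n b^{(m)}(Y^{(0)})|$, the derivative having total order $m+n\le l+1$, is bounded by a universal constant times $\nrm{b}{G^{l+1}}(1+|Y^{(0)}|^\ka)$ by the $G^{l+1}$ hypothesis (passing to the limit $\ep\to0$); and, since the argument $(1-\xi)Y^{(0)}+\xi(Y^{(0)}+Z_{1,l}^\ep)$ of the order-$(l+1)$ derivative inside $\rho_k^\ep$ has modulus at most $|Y^{(0)}|+\sum_{i=1}^l|Y^{(i)}|$, also $|\rho_k^\ep(Y^{(0)}+Z_{1,l}^\ep)|$ is bounded by a polynomial in $|Y^{(0)}|,\dots,|Y^{(l)}|$ whose coefficients are constants times $\nrm{b}{G^{l+1}}$. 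Multiplying these estimates and summing the finitely many terms produces a polynomial $q$ in $|Y^{(0)}|,\dots,|Y^{(l)}|$ only — hence trivially an element of $\R[y_0,\dots,y_{l+1}]$ — with coefficients built from combinatorial constants and powers of $\nrm{b}{G^{l+1}}$, therefore increasing in $\nrm{b}{G^{l+1}}$, while $C$ is increasing in $\nrm{b}{\Lip}$.

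Beyond the verbatim use of Lemma \ref{lem:bOfSeriesTaylor} and of $\ep<1$, the step I expect to be the main obstacle is exactly this last bookkeeping: verifying carefully that \emph{both} families of $Z$-terms from the lemma genuinely contribute a factor $\ep^{l+1}$ (and not merely $\ep^{l}$), and then organizing the resulting collection of monomials and polynomial-growth estimates into a single polynomial with coefficients that are manifestly increasing in the two norms of $b$.
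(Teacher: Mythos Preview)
Your argument is correct and follows the same route as the paper: split via the intermediate point $b^\ep(\sum_{k=0}^l Y^{(k)}\ep^k)$, bound the first difference by the uniform Lipschitz constant times $\ep^{l+1}|R^\ep|$, and for the second difference invoke Lemma~\ref{lem:bOfSeriesTaylor} and check that both families of residual terms carry at least $\ep^{l+1}$. Your bookkeeping of the $\ep$-powers is in fact more explicit than the paper's, and your bound on $|\rho_k^\ep(Y^{(0)}+Z_{1,l}^\ep)|$ --- via $|Y^{(0)}+\xi Z_{1,l}^\ep|\le |Y^{(0)}|+\sum_{i=1}^l|Y^{(i)}|$ --- is slightly sharper: the paper records this bound as $1+|Y^{(0)}|^\ka+|Y^\ep|^\ka$ and hence leaves $|Y^\ep|$ in the polynomial $q$, whereas your estimate shows the $y_{l+1}$-variable is not actually needed.
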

In this and similar situations, when we refer to, say, the $\Lip$-norm of $b : I\times \R^d \to \R^d$ with $b_i\in \Lip$, uniformly in $i\in I$, what we really mean is $\sup_{i\in I} \nrm{b_i}{\Lip}$.
\begin{proof}
We write
\begin{align*}
b^\ep(Y^\ep) - \sum_{k=0}^l (b(Y))^{(k)} \ep^k = & b^\ep(Y^\ep) - b^\ep\left(\sum_{k=0}^l Y^{(k)}\ep^k\right) \\
&+ b^\ep\left(\sum_{k=0}^l Y^{(k)}\ep^k\right) - \sum_{k=0}^l (b(Y))^{(k)} \ep^k.
\end{align*}
Then,
\begin{align*}
\left|b^\ep(Y^\ep) - b^\ep\left(\sum_{k=0}^l Y^{(k)}\ep^k\right)\right| \leq & \ep^{l+1}\sup_{\stackrel{\ep\in (0,1)}{t\in [0,T]}} \nrm{b^\ep_t}{\Lip} |R^\ep|.
\end{align*}
On the other hand, recall Lemma \ref{lem:bOfSeriesTaylor}.
The Taylor remainder satisfies
\[|\rho_k^\ep(y)| \lesssim |\der_\ep^k \der_y^{(l+1)-k} b^{\xi\ep}((1-\xi)Y^{(0)} + \xi y)| \lesssim 1 + |Y^{(0)}|^\ka + |Y^\ep|^\ka ,\]
for some $\ka \in \N$. Thus, $\left|b^\ep\left(\sum_{k=0}^l Y^{(k)}\ep^k\right) - \sum_{k=0}^l (b(Y))^{(k)} \ep^k\right|$ is bounded above by
\begin{align*}
&\ep^{l+1}\sum_{m+n\leq l} \frac{1}{n!} \der_y^n b^{(m)}(Y^{(0)}) \ep^{-(l+1)}|Z_{n,nl}^\ep - Z_{n,l}^\ep| \\
&+ \ep^{l+1}\sum_{k=0}^{l+1} |\rho_k^\ep(Y^{(0)} + Z_{1,l}^\ep)| |Z_{(l+1) - k,l(l+1) - lk}^\ep| \ep^{k-(l+1)},
\end{align*}
where $\ep^{-(l+1)}|Z_{n,nl}^\ep - Z_{n,l}^\ep|$ and $|\rho_k^\ep(Y^{(0)} + Z_{1,l}^\ep)| |Z_{(l+1) - k,l(l+1) - lk}^\ep| \ep^{k-(l+1)}$ are bounded by multivariate polynomials in $|Y^{(0)}|,\dots, |Y^{(k)}|, |Y^\ep|$, not depending on $\ep$ (only on $\nrm{b}{G^l}$).
\end{proof}
Note that if $q \in \R[x_1,\dots, x_l]$ is a multivariate polynomial, i.e.\  we can write
\[q(x) = \sum_{|\al|\leq n} q_\al x^\al,\]
and  $X_1,\dots, X_l$ are stochastic processes, then by Hölder's inequality
\begin{align*}
\nrm{q(X_1,\dots, X_l)}{*p}\leq &\sum_{|\al|\leq n} |q_\al| \nrm{X^\al}{*p} \\
\leq & \sum_{|\al|\leq n} |q_\al| \prod_{k=1}^l \nrm{X_k}{*p \al_k l}^{\al_k}.
\end{align*}

\begin{prop}
\label{prop:perturbSys}
Let $T > 0$ and $l\in \N_0$. Suppose we are given functions
\begin{align*}
b : (0,1)\times[0,T]\times \R \to \R, (\ep, t, x) \mapsto b^{\ep}_t(x), \\
\si : (0,1)\times[0,T]\times \R  \to \R, (\ep, t, x) \mapsto \si^{\ep}_t(x)
\end{align*}
such that $b_t^\ep, \si_t^\ep \in \Lip^{l+1}\cap G_1$, uniformly in $t\in [0,T]$ and $\ep \in (0,1)$. Let $Y$ be a solution of the family of \tSDE s
\begin{equation}
dY_t^\ep = b^\ep_t(Y_t^\ep)\,dt + \si^{\ep}_t(Y_t^{\ep})\,dW_t.
\end{equation}
Then for every $k\leq l$, there exist a unique solution $Y^{(k)}$ of
\begin{equation}
\label{eq:SDEPerturbCoeff}
dY^{(k)}_t = (b(Y))_t^{(k)}\,dt + (\si(Y))_t^{(k)}\,dW_t, \quad Y^{(k)}_0 = \begin{cases}
Y_0, & k = 0,\\
0, & k \in \N, 
\end{cases}
\end{equation}
and the solutions satisfy
\[\nrm{Y^{(0)}}{*p} \in G_1(\R), \quad\nrm{Y^{(k)}}{*p} < \infty, k\in \N,\]
for all $p\geq 2$.
Here, $b(Y)^{(k)}$ and $\si(Y)^{(k)}$ are given by \eqref{eq:bYk}.
Further,
\[\frac{1}{\ep^{l+1}}\nrm{Y^\ep - \sum_{k=0}^l Y^{(k)}\ep^k}{*p} \in G(\R),\]
uniformly in $\ep \in (0,1)$, for all $p\geq 2$. Moreover $\nrm{\nrm{Y^{(0)}}{*p}}{G}$, $\nrm{Y^{(k)}}{*p}$ and $\sup_{\ep\in (0,1)} \nrm{\nrm{Y^\ep - \sum_{k=0}^l Y^{(k)}\ep^k}{*p}}{G}$ depend only on, and are increasing functions of the $\Lip^{l+1}$- and $G_1$-norms of $b$ and $\si$, for all $p\geq 2$.
\end{prop}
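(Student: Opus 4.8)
The plan is to build the fields $Y^{(0)},\dots,Y^{(l)}$ one at a time by induction on $k$, exploiting the triangular structure of \eqref{eq:SDEPerturbCoeff}, and then to control the remainder $R^\ep$ by a Grönwall argument fed by Proposition \ref{prop:composEpSeries}. As a preliminary I would record well-posedness and moment bounds for $Y^\ep$ itself: since $b_t^\ep,\si_t^\ep$ are Lipschitz and of linear growth uniformly in $(t,\ep)$, the classical existence and uniqueness theorem applies for each $\ep$, and a Burkholder--Davis--Gundy plus Grönwall estimate gives $\nrm{Y^{\ep}(x)}{*p}\lesssim 1+|x|$ with a constant depending only on $T$, $p$ and the $\Lip$- and $G_1$-norms of $b,\si$; in particular this is uniform in $\ep\in(0,1)$. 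This supplies exactly the control of $|Y^\ep|$ needed to feed into the polynomial $q$ of Proposition \ref{prop:composEpSeries}.

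For the induction, the base case $k=0$ is again a standard SDE: $b^{(0)}=b^\ep|_{\ep=0}$ and $\si^{(0)}$ inherit the $\Lip\cap G_1$ bounds, so \eqref{eq:SDEPerturbCoeff} has a unique solution with $\nrm{Y^{(0)}}{*p}\in G_1(\R)$ and the constant monotone in the data. For the inductive step, assume $Y^{(0)},\dots,Y^{(k-1)}$ have been constructed with the claimed moment bounds. Reading off \eqref{eq:bYk} (see the explicit formulas in Remark \ref{rem:computingPerturbSDEs}) one isolates the unique term containing the highest-order field, namely the one with $(m,n)=(0,1)$, and writes
\[(b(Y))^{(k)}=\der_y b^{(0)}(Y^{(0)})\,Y^{(k)}+F^b_k,\qquad (\si(Y))^{(k)}=\der_y\si^{(0)}(Y^{(0)})\,Y^{(k)}+F^\si_k,\]
where $\der_y b^{(0)},\der_y\si^{(0)}$ are bounded (the coefficients being Lipschitz in $y$ uniformly in $\ep$) and the sources $F^b_k,F^\si_k$ are finite sums of terms $\tfrac1{n!}\der_y^n b^{(m)}(Y^{(0)})\,Y^{(k-m,n)}$ with $(m,n)\neq(0,1)$, hence products of a polynomially growing function of $Y^{(0)}$ with a monomial in the strictly lower-order fields $Y^{(1)},\dots,Y^{(k-1)}$. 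By the Hölder estimate for polynomials of processes recorded just after Proposition \ref{prop:composEpSeries}, together with the inductive moment bounds and the preliminary step, $\nrm{F^b_k}{*p},\nrm{F^\si_k}{*p}\in G(\R)$. Thus \eqref{eq:SDEPerturbCoeff} is a \emph{linear} SDE in $Y^{(k)}$ with bounded progressively measurable coefficients and an $L^p$-source, with $Y^{(k)}_0=0$; existence, uniqueness and $\nrm{Y^{(k)}}{*p}\in G(\R)$ follow from the stochastic-exponential representation of the solution and BDG/Grönwall, with every constant an increasing function of the $\Lip^{l+1}$- and $G_1$-norms of $b,\si$.

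Finally, for the remainder set $D^\ep:=Y^\ep-\sum_{k=0}^l Y^{(k)}\ep^k=\ep^{l+1}R^\ep$. Since $d\big(\sum_{k\le l}Y^{(k)}\ep^k\big)=\sum_{k\le l}(b(Y))^{(k)}\ep^k\,dt+\sum_{k\le l}(\si(Y))^{(k)}\ep^k\,dW_t$ by \eqref{eq:SDEPerturbCoeff}, subtracting from the SDE for $Y^\ep$ shows that $D^\ep$ has drift $b^\ep(Y^\ep)-\sum_{k\le l}(b(Y))^{(k)}\ep^k$ and diffusion $\si^\ep(Y^\ep)-\sum_{k\le l}(\si(Y))^{(k)}\ep^k$. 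Proposition \ref{prop:composEpSeries}, applied to $b$ and to $\si$, bounds each of these in absolute value by $\ep^{l+1}\big(q(|Y^{(0)}|,\dots,|Y^{(l)}|,|Y^\ep|)+C|R^\ep|\big)=\ep^{l+1}q(\cdots)+C|D^\ep|$. A BDG estimate followed by Grönwall (using $D^\ep_0=0$ and Jensen to bound $(\int_0^t|D^\ep_s|^2\,ds)^{p/2}\le T^{p/2-1}\int_0^t|D^\ep_s|^p\,ds$) then yields $\nrm{D^\ep}{*p}^p\lesssim e^{C'T}\ep^{(l+1)p}\,\E\!\int_0^T\big(q^p+\tilde q^p\big)\,ds$, where $q,\tilde q$ are evaluated at $(|Y^{(0)}_s|,\dots,|Y^{(l)}_s|,|Y^\ep_s|)$; by the same Hölder estimate and the moment bounds already proved, the last integral is finite, of polynomial growth in $x$, uniformly in $\ep$. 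Dividing by $\ep^{(l+1)p}$ gives $\nrm{R^\ep}{*p}\in G(\R)$ uniformly in $\ep$, with the relevant $G$-norm again monotone in the $\Lip^{l+1}$- and $G_1$-norms of $b,\si$.

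The main obstacle is the inductive step: one must verify once and for all that $(b(Y))^{(k)}$ and $(\si(Y))^{(k)}$ always decompose into the bounded-coefficient linear part $\der_y b^{(0)}(Y^{(0)})Y^{(k)}$ (resp.\ $\der_y\si^{(0)}(Y^{(0)})Y^{(k)}$) plus a source assembled only from $Y^{(0)},\dots,Y^{(k-1)}$, and that the $L^p$-norms of these sources — and of the polynomial $q$ in the remainder estimate — stay of polynomial growth in the initial datum $x$ and uniform in $\ep$. Keeping track of the fact that every BDG/Grönwall constant depends only on, and increasingly on, the $\Lip^{l+1}$- and $G_1$-norms of $b$ and $\si$ is routine bookkeeping, but it must be done carefully, since this uniformity is precisely what makes the proposition usable in the intended applications.
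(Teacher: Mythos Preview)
Your proposal is correct and follows essentially the same route as the paper: an induction on $k$ exploiting the triangular/linear-in-$Y^{(k)}$ structure of \eqref{eq:SDEPerturbCoeff} to obtain existence, uniqueness and moment bounds, followed by the remainder estimate via Proposition \ref{prop:composEpSeries} and a Gr\"onwall argument on $\nrm{R^\ep}{*p,t}$. Your write-up is in fact more explicit than the paper's (you spell out the decomposition $(b(Y))^{(k)}=\der_y b^{(0)}(Y^{(0)})Y^{(k)}+F^b_k$ and the BDG/Gr\"onwall step for $D^\ep$), but the underlying ideas and the order of the argument are the same.
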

Note that even though we initially introduced $Y^{(k)}$ for $k > 0$ as random fields, they in fact do not depend on the initial value assigned to $Y$, in contrast to $Y^{(0)}$.
\begin{proof}
We may write $\nrm{Y^{(k)}}{*p}\in G_1(\R)$ in place of $\nrm{Y^{(k)}}{*p} <\infty$, for $k\in \N$.
Suppose \eqref{eq:SDEPerturbCoeff} has a unique solution for all $k'< k$, such that $\nrm{Y^{(k')}}{*p} \in G_1(\R)$, for all $p\geq 2$. Then we can plug these solutions into \eqref{eq:SDEPerturbCoeff}. The coefficients in \eqref{eq:SDEPerturbCoeff}  are then uniformly linear and Lipschitz in $Y^{(k)}$. Hence, \eqref{eq:SDEPerturbCoeff} has a unique solution, with $\nrm{Y^{(k')}}{*p} \in G_1(\R)$, for all $p\geq 2$.
Similarly, \eqref{eq:SDEsmallPar} has a unique solution $Y^\ep$, with $\nrm{Y^\ep}{*p} \in G_1(\R)$, for all $p\geq 2$.
Now, consider the remainder term
\[R^\ep := \frac{1}{\ep^{l+1}}\left(Y^\ep - \sum_{k=0}^l Y^{(k)}\ep^k\right).\]
Then, by using the \tSDE{} governing $Y^\ep$ and $Y^{(0)},\dots Y^{(l)}$ we have, for all $p\geq 2$ and $t\in [0,T]$,
\begin{align*}
\nrm{R^\ep}{*p,t} \leq & \frac1{\ep^{l+1}} \nrm{\int_0^{\blnk} b_s^\ep(Y_s^\ep) - \sum_{k=0}^l (b(Y))^{(k)}_s\ep^k\,ds}{*p,t} \\
&+ \frac1{\ep^{l+1}} \nrm{\int_0^{\blnk} \si_s^\ep(Y_s^\ep) - \sum_{k=0}^l (\si(Y))^{(k)}_s\ep^k\,dW_s}{*p,t} \\
& \lesssim \frac1{\ep^{l+1}} \int_0^t \nrm{b^\ep(Y^\ep) - \sum_{k=0}^l (b(Y))^{(k)}\ep^k}{*p,s} \,ds \\
&+ \frac1{\ep^{l+1}} \int_0^t \nrm{\si^\ep(Y^\ep) - \sum_{k=0}^l (\si(Y))^{(k)}\ep^k}{*p,s} \,ds\\
&\lesssim \int_0^t (\nrm{q(|Y^{(0)}|, \dots, |Y^{(l)}|, |Y^\ep|)}{*p,s} + C \nrm{R^\ep}{*p,s})\,ds
\end{align*}
for some multivariate polynomial $q$. Then, by Grownall's inequality
\[\nrm{R^\ep}{*p,t} \leq C_1\nrm{q(|Y^{(0)}|, \dots, |Y^{(l)}|, |Y^\ep|)}{*p,t} e^{tC_2},\]
for some constants $C_1, C_2 > 0$, with
\[\nrm{q(|Y^{(0)}|, \dots, |Y^{(l)}|, |Y^\ep|)}{*p,t}\in G(\R).\]
\end{proof}

Let us make a few observations about the series expansion of $Y$ according to \ref{prop:perturbSys} in the special case we encounter for second-order diffusion approximations to stochastic approximations algorithms with a learning rate $h = \ep^2$.
\begin{prop}
\label{prop:perturbSysSMELike}
Suppose we are in the setting of Proposition \ref{prop:perturbSys} with $l = 3$. Further, we assume
\[\si^{(0)} = \si^{(2)} = b^{(1)} = b^{(3)} = 0.\]
Then the following statements hold true.
\begin{enumerate}[(i)]
\item $Y^{(0)}$ is deterministic and $Y^{(1)}$ is Gaussian,
\item $\E[(Y^{(1)})^{2k+1}] = 0$, for all $k\in \N_0$,
\item $\E[Y^{(3)}] = 0$,
\item $\Cov(Y^{(1)}, Y^{(2)}) = 0$.
\end{enumerate}
Further, the following dynamics hold true
\begin{align}
\label{eq:perturbODESys}
dY_t^{(0)} = & b^{(0)}_t(Y_t^{(0)})\,dt, && Y_0^{(0)} = Y_0 \nonumber\\
d\Var[Y_t^{(1)}] = &  2 \der_y b^{(0)}_t(Y_t^{(0)})\Var[Y_t^{(1)}] + \si^{(1)}_t(Y^{(0)}_t)^2  \,dt, && \Var[Y_0^{(1)}] = 0,\nonumber\\
d\E[Y_t^{(2)}] = & b^{(2)}_t(Y^{(0)}_t) + \frac12 \der_y^2 b^{(0)}_t(Y^{(0)}_t)\Var[Y_t^{(1)}]\nonumber \\
	&+ \der_y b^{(0)}_t(Y^{(0)}_t)\E[Y^{(2)}_t]\,dt, && \E[Y_0^{(2)}] = 0.
\end{align}
\end{prop}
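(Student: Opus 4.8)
The plan is to prove all four items, plus the dynamics \eqref{eq:perturbODESys}, by a direct computation starting from the triangular system \eqref{eq:SDEPerturbCoeff}, after first writing its coefficients explicitly. Using Remark \ref{rem:computingPerturbSDEs} together with the hypotheses $\si^{(0)}=\si^{(2)}=b^{(1)}=b^{(3)}=0$, the equations for $k=0,1,2,3$ become $dY^{(0)}_t=b^{(0)}_t(Y^{(0)}_t)\,dt$ with $Y^{(0)}_0=Y_0$; $dY^{(1)}_t=\der_yb^{(0)}_t(Y^{(0)}_t)\,Y^{(1)}_t\,dt+\si^{(1)}_t(Y^{(0)}_t)\,dW_t$; $dY^{(2)}_t=\big(b^{(2)}_t(Y^{(0)}_t)+\der_yb^{(0)}_t(Y^{(0)}_t)\,Y^{(2)}_t+\tfrac12\der_y^2b^{(0)}_t(Y^{(0)}_t)\,(Y^{(1)}_t)^2\big)\,dt+\der_y\si^{(1)}_t(Y^{(0)}_t)\,Y^{(1)}_t\,dW_t$; and $dY^{(3)}_t=\big(\der_yb^{(0)}_t(Y^{(0)}_t)\,Y^{(3)}_t+\der_yb^{(2)}_t(Y^{(0)}_t)\,Y^{(1)}_t+\der_y^2b^{(0)}_t(Y^{(0)}_t)\,Y^{(1)}_tY^{(2)}_t+\tfrac16\der_y^3b^{(0)}_t(Y^{(0)}_t)\,(Y^{(1)}_t)^3\big)\,dt+\big(\si^{(3)}_t(Y^{(0)}_t)+\der_y\si^{(1)}_t(Y^{(0)}_t)\,Y^{(2)}_t+\tfrac12\der_y^2\si^{(1)}_t(Y^{(0)}_t)\,(Y^{(1)}_t)^2\big)\,dW_t$, all with zero initial value for $k\geq 1$. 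Since the $Y^{(0)}$-equation has no diffusion term and $b^{(0)}_t\in\Lip$ uniformly in $t$, the field $Y^{(0)}$ is deterministic, and it is Lipschitz in $t$ on $[0,T]$ by the linear growth of $b^{(0)}$ and an \emph{a priori} Gronwall bound; hence $Y^{(0)}$ ranges over a compact set, and by the $G^{l+1}$-regularity every coefficient function $t\mapsto\der^m_yb^{(i)}_t(Y^{(0)}_t)$, $t\mapsto\der^m_y\si^{(i)}_t(Y^{(0)}_t)$ appearing above is bounded and measurable on $[0,T]$. Write $a_t:=\der_yb^{(0)}_t(Y^{(0)}_t)$ and $c_t:=\si^{(1)}_t(Y^{(0)}_t)$.

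Next I would treat (i), (ii) and the variance equation. Plugging the deterministic $Y^{(0)}$ into the $Y^{(1)}$-equation, $Y^{(1)}$ solves the scalar linear SDE $dU_t=a_tU_t\,dt+c_t\,dW_t$, $U_0=0$, with deterministic bounded coefficients, whose solution $Y^{(1)}_t=\int_0^t\exp\!\big(\int_s^t a_r\,dr\big)c_s\,dW_s$ is a Wiener integral of a deterministic $L^2$-integrand; therefore $(Y^{(1)}_t)_{t\in[0,T]}$ is a centered Gaussian process, which is (i). Statement (ii) then follows at once, since odd moments of a centered Gaussian vanish. For the variance equation in \eqref{eq:perturbODESys} I would apply It\^o's formula to $(Y^{(1)}_t)^2$, take expectations — the local martingale $\int_0^\cdot 2c_sY^{(1)}_s\,dW_s$ is a true martingale by the uniform bounds $\nrmps{Y^{(1)}}{p}<\infty$ of Proposition \ref{prop:perturbSys} — and use $\E[(Y^{(1)}_t)^2]=\Var[Y^{(1)}_t]$.

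Then I would do (iv), the $\E[Y^{(2)}]$-equation, and (iii), in that order. Applying the It\^o product rule to $Y^{(1)}_tY^{(2)}_t$, the finite-variation part equals $\big(Y^{(1)}_tb^{(2)}_t(Y^{(0)}_t)+2a_tY^{(1)}_tY^{(2)}_t+\tfrac12\der_y^2b^{(0)}_t(Y^{(0)}_t)(Y^{(1)}_t)^3+c_t\der_y\si^{(1)}_t(Y^{(0)}_t)Y^{(1)}_t\big)\,dt$ (the $c_t\der_y\si^{(1)}_t(Y^{(0)}_t)Y^{(1)}_t$ coming from the cross-variation), and the stochastic part is a martingale by Proposition \ref{prop:perturbSys}. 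Taking expectations, every summand which is a deterministic function of $t$ times $Y^{(1)}_t$ or times $(Y^{(1)}_t)^3$ vanishes by (i)--(ii), leaving $\tfrac{d}{dt}\E[Y^{(1)}_tY^{(2)}_t]=2a_t\,\E[Y^{(1)}_tY^{(2)}_t]$ with zero initial value, so $\E[Y^{(1)}_tY^{(2)}_t]\equiv0$; combined with $\E[Y^{(1)}_t]=0$ this is exactly $\Cov(Y^{(1)}_t,Y^{(2)}_t)=0$, i.e. (iv). Taking expectations directly in the $Y^{(2)}$-equation (the diffusion term is a martingale) and substituting $\E[(Y^{(1)}_t)^2]=\Var[Y^{(1)}_t]$ gives the $\E[Y^{(2)}]$-equation of \eqref{eq:perturbODESys}. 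Finally, taking expectations in the $Y^{(3)}$-equation: the diffusion term is a martingale, $\der_yb^{(2)}_t(Y^{(0)}_t)\E[Y^{(1)}_t]=0$ and $\tfrac16\der_y^3b^{(0)}_t(Y^{(0)}_t)\E[(Y^{(1)}_t)^3]=0$ by (ii), while $\der_y^2b^{(0)}_t(Y^{(0)}_t)\E[Y^{(1)}_tY^{(2)}_t]=0$ by (iv) and $\E[Y^{(1)}_t]=0$; hence $h(t):=\E[Y^{(3)}_t]$ solves $h'(t)=a_th(t)$, $h(0)=0$, so $\E[Y^{(3)}]\equiv0$, which is (iii).

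I do not expect a genuine obstacle here: the statement is a structural consequence of the fact that, under the vanishing hypotheses, noise enters the system only at the odd levels $Y^{(1)},Y^{(3)}$ while the drift corrections enter only through the even level $Y^{(2)}$, so that odd moments of the Gaussian $Y^{(1)}$ and the cross-moment $\E[Y^{(1)}Y^{(2)}]$ are forced to zero by linear homogeneous ODEs with zero data. The only places needing real care are the bookkeeping of the first step — correctly pruning the general formulas of Remark \ref{rem:computingPerturbSDEs} to the present hypotheses — and, repeatedly, the verification that the It\^o integrals encountered are true martingales, so that their expectations vanish and expectation commutes with $\int_0^t$; both are handled uniformly by the moment estimates $\nrmps{Y^{(k)}}{p}<\infty$ of Proposition \ref{prop:perturbSys} (valid for all $p\geq 2$) together with the boundedness on $[0,T]$ of the deterministic coefficient functions $a_t$, $c_t$, $\der_y\si^{(1)}_t(Y^{(0)}_t)$, $\der_y^2b^{(0)}_t(Y^{(0)}_t),\dots$.
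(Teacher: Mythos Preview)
Your proposal is correct and follows essentially the same architecture as the paper: write out the triangular system from Remark~\ref{rem:computingPerturbSDEs} under the vanishing hypotheses, observe that $Y^{(0)}$ is deterministic and $Y^{(1)}$ is Gaussian, then take expectations in the remaining equations and solve the resulting linear homogeneous ODEs with zero initial data. The minor technical differences are that (a) for (ii) you invoke Gaussianity of $Y^{(1)}$ directly to kill odd moments, whereas the paper proves $\E[(Y^{(1)})^{2k+1}]=0$ by induction on $k$ via It\^o's formula applied to $(Y^{(1)})^{2k+1}$, and (b) for (iv) you apply the It\^o product rule to $Y^{(1)}Y^{(2)}$ and derive an ODE for $\E[Y^{(1)}Y^{(2)}]$, while the paper instead computes $\E[[Y^{(1)},Y^{(2)}]_t]=0$ and concludes from there. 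Your route for (iv) is arguably cleaner and more self-contained; otherwise the two arguments are interchangeable.
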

\begin{proof}
\textbf{Regarding $Y^{(0)}$:}
Since $\si^{(0)} = 0$, the equation governing $Y^{(0)}$ is the \tODE{}
\[dY^{(0)}_t = b^{(0)}_t(Y_t^{(0)})\,dt, \quad Y^{(0)}_0 = Y_0,\]
by Remark \ref{rem:computingPerturbSDEs}.
In particular, $Y^{(0)}$ is deterministic.\\
\textbf{Regarding $Y^{(1)}$:}
Since $b^{(1)} = 0$ and again by Remark \ref{rem:computingPerturbSDEs}, $Y^{(1)}$ satisfies the linear equation 
\[dY^{(1)}_t = \der_y b^{(0)}_t(Y_t^{(0)}) Y^{(1)}_t\,dt + \si^{(1)}_t(Y^{(0)}_t)\,dW_t,\]
and the diffusion term does not depend on $Y^{(1)}$. Thus, $Y^{(1)}$ is Gaussian. 
Observe that $\left(\int_0^t \si_s^{(1)}(Y_s^{(0)})\,dW_s\right)_{s\in [0,T]}$ is a martingale.
Hence, by the optional stopping theorem
\[d\E[Y_t^{(1)}] = \der_y b^{(0)}(Y_t^{(0)})\E[Y^{(1)}_t]\,dt ,\quad \E[Y_0^{(1)}] = 0.\]
The unique solution to this \tODE s is $\E[Y^{(1)}] = 0$, which proves (ii) for $k = 0$. Assume that (ii) is true for $k-1\geq 0$.
By \tIto's formula, we have
\begin{align*}
d(Y^{(1)}_t)^{k} = & k \der_y b^{(0)}_t(Y_t^{(0)})(Y^{(1)}_t)^{k}\,dt \\
&+ \frac12 k(k-1) \si^{(1)}_t(Y^{(0)}_t)^2(Y^{(1)}_t)^{(k-2)} \,dt\\
&+k \si^{(1)}_t(Y^{(0)}_t)(Y^{(1)}_t)^{(k-1)}\,dW_t.
\end{align*}
Substituting $k$ with $2k+1$ and taking the expectation yields
\begin{align*}
d\E[(Y^{(1)}_t)^{2k+1}] = & k \der_y b^{(0)}_t(Y_t^{(0)})\E[(Y^{(1)}_t)^{2k+1}]\,dt \\
&+ \frac12 k(k-1) (\si^{(1)}_t(Y^{(0)}_t))^2\E[(Y^{(1)}_t)^{2k-1}] \,dt\\
&+k \E[\si^{(1)}_t(Y^{(0)}_t)(Y^{(1)}_t)^{2k}\,dW_t].
\end{align*}
By Hölder's inequality, we have
\begin{align*}
\nrm{\si^{(1)}(Y^{(0)})(Y^{(1)})^{2k}}{2} &\leq  \nrm{|\si^{(1)}(Y^{(0)})||(Y^{(1)})|^{2k}}{2}\\
&\lesssim \nrm{\si^{(1)}(Y^{(0)})}{4} \nrm{(Y^{(1)})^{2k}}{4} \\
&\lesssim (1 + \nrm{Y^{(0)}}{4}) \nrm{Y^{(1)}}{8k}^{2k}\\
&< \infty.
\end{align*}
Thus, 
\[\left(\int_0^t \si^{(1)}(Y^{(0)})(Y^{(1)})^{2k}\,dW\right)_{t\in [0,T]}\]
is a square-integrable martingale, and by optional stopping as well as property (ii) for $k' < k$,
\begin{align*}
d\E[(Y^{(1)}_t)^{2k+1}] = & k \der_y b^{(0)}_t(Y_t^{(0)})\E[(Y^{(1)}_t)^{2k+1}]\,dt, \\ & \E[(Y^{(1)}_0)^{2k+1}] = 0.
\end{align*}
Again, the unique solution to this \tODE{} is $\E[(Y^{(1)})^{2k+1}] = 0$, proving (ii) for general $k$. The equation for $\Var[Y^{(1)}]$ in \eqref{eq:perturbODESys} follows readily.\\
\textbf{Regarding $Y^{(2)}$ and (iv):}
The process $Y^{(2)}$ satisfies the equation
\begin{align*}
dY^{(2)}_t = & b^{(2)}_t(Y^{(0)}_t) +\der_y b^{0}_t(Y^{(0)}_t) Y^{(2)}_t + \frac12 \der_y^2 b^{0}_t(Y^{(0)}_t) (Y^{(1)}_t)^2\,dt\\
&+ \der_y \si_t^{(1)}(Y^{(0)}_t)Y^{(1)}_t\,dW_t.
\end{align*}
Denote by $[X,Y]$ the quadratic covariation of processes $X$ and $Y$. Then
\begin{align*}
\E[[Y^{(1)},Y^{(2)}]_t] = & \int_0^t \E[\si^{(1)}_s(Y^{(0)}_s)\der_y \si_s^{(1)}(Y^{(0)}_s)Y^{(1)}_s]\,ds\\
= & 0,
\end{align*}
by (i) and (ii). Hence, $\Cov(Y^{(1)}, Y^{(2)})$ is $0$ everywhere as well.\\
\textbf{Regarding $Y^{(3)}$:}
The process $Y^{(3)}$ satisfies the equation
\begin{align*}
dY_t^{(3)} = & \der_y b^{0}_t(Y^{(0)}_t) Y^{(3)}_t + \der_y b^{(2)}_t(Y^{(0)}_t) Y^{(1)}_t + \der_y^2 b^{0}_t(Y^{(0)}_t) Y^{(1)}_t Y^{(2)}_t +\frac16\der_y^3 b^{0}_t(Y^{(0)}_t) (Y^{(1)}_t)^3\,dt \\
&+ \si^{(3)}_t(Y^{(0)}_t) + \der_y \si^{(1)}_t(Y^{(0)}_t) Y^{(2)}_t + \frac12 \der_y^2 \si^{(1)}_t(Y^{(0)}_t) (Y^{(1)}_t)^2\,dW_t.
\end{align*}
Because of (ii) and (iv), as well as another optional stopping argument, we have
\begin{align*}
d\E[Y_t^{(3)}] = & \der_y b^{(0)}_t(Y^{(0)}_t) \E[Y^{(3)}_t]\,dt, \E[Y_0^{(3)}] = 0
\end{align*}
with unique solution $\E[Y^{(3)}] = 0$. This proves (iii).
\end{proof}

\begin{prop}
\label{prop:funOfPerturbSeries}
Suppose we are in the setting of Proposition \ref{prop:perturbSysSMELike} and we are given a function $g\in G^4(\R)$. Set
\begin{align*}
Z = &\frac12 \der_y^2 g(Y^{(0)})(Y^{(1)})^2 + \der_y g(Y^{(0)})Y^{(2)}, \\
V^\ep = &\ep \der_y g(Y^{(0)})Y^{(1)} + \ep^3 (g(Y))^{(3)}.
\end{align*}
Then we have $\E[V^\ep] = 0, \ep \in (0,1)$, and
\begin{align*}
r^\ep_{1,p} := \frac{1}{\ep^4}\nrm{g(Y^\ep) - g(Y^{(0)}) - V^\ep - \ep^2 Z}{*p} \in G(\R),
\end{align*}
uniformly in $\ep \in (0,1)$, for all $p\geq 2$.
In particular,
\[r^\ep_2 := \frac{1}{\ep^4}\left|\E g(Y_T^\ep) - \left(g(Y_T^{(0)}) + \ep^2 \left(\frac12 \der_y^2 g(Y^{(0)}_T)\Var[Y^{(1)}_T] + \der_y g(Y^{(0)}_T)\E[Y^{(2)}_T]\right)\right)\right|\]
is in $G(\R)$, uniformly in $\ep \in (0,1)$. Further, $\sup_{\ep \in (0,1)} \nrm{r^\ep_{1,p}}{G}$ and $\sup_{\ep \in (0,1)} \nrm{r^\ep_{2}}{G}$ depend only on, and are increasing functions of the $\Lip$- and $G^{l+1}$-norms of $b$ and $\si$, as well as $\nrm{g}{G^4}$, for all $p\geq 2$.
\end{prop}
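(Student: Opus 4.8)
The plan is to treat $g$ as a degenerate instance of the coefficient in Proposition \ref{prop:composEpSeries}, namely the function $(\ep,t,y)\mapsto g(y)$ that is constant in $\ep$ and $t$. Since then $g^{(0)} = g$ and $g^{(k)} = 0$ for $k \geq 1$, the bookkeeping formulas of Remark \ref{rem:computingPerturbSDEs} (with $l = 3$, $d = 1$) collapse to
\[(g(Y))^{(0)} = g(Y^{(0)}), \qquad (g(Y))^{(1)} = \der g(Y^{(0)})\,Y^{(1)}, \qquad (g(Y))^{(2)} = \der g(Y^{(0)})\,Y^{(2)} + \tfrac12 \der^2 g(Y^{(0)})\,(Y^{(1)})^2,\]
\[(g(Y))^{(3)} = \der g(Y^{(0)})\,Y^{(3)} + \der^2 g(Y^{(0)})\,Y^{(1)}Y^{(2)} + \tfrac16 \der^3 g(Y^{(0)})\,(Y^{(1)})^3,\]
and a direct comparison with the definitions of $V^\ep$ and $Z$ yields the identity $\sum_{k=0}^3 (g(Y))^{(k)}\ep^k = g(Y^{(0)}) + V^\ep + \ep^2 Z$. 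Hence the quantity controlled by $r^\ep_{1,p}$ is exactly $\ep^{-4}$ times the $\nrm{\cdot}{*p}$-norm of the remainder of the composition expansion of $g$ along the series $Y^\ep$.

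The first genuine step is the pointwise estimate
\[\frac{1}{\ep^4}\Bigl|g(Y^\ep) - g(Y^{(0)}) - V^\ep - \ep^2 Z\Bigr| \leq q\bigl(|Y^{(0)}|, \dots, |Y^{(3)}|, |Y^\ep|, |R^\ep|\bigr)\]
for a multivariate polynomial $q$ with nonnegative coefficients depending only on, and increasing in, $\nrm{g}{G^4}$. I would obtain this either by running the proof of Proposition \ref{prop:composEpSeries} with $b$ replaced by $g$, or — to sidestep the fact that $g$ itself need not be globally Lipschitz — by Taylor-expanding $g$ to order $4$ around the deterministic point $Y^{(0)}$, substituting $Y^\ep - Y^{(0)} = \sum_{k=1}^3 Y^{(k)}\ep^k + \ep^4 R^\ep$, and collecting powers of $\ep$ with the multinomial identities of Lemma \ref{lem:bOfSeriesTaylor}; the order-$4$ Taylor remainder and all contributions of order $\geq \ep^4$ produced by the substitution are bounded, using $\der^j g \in G_\ka$ for $j \leq 4$, by polynomials in the listed quantities. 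Taking $\nrm{\cdot}{*p}$-norms, applying the Hölder estimate for polynomials of stochastic processes recorded after Proposition \ref{prop:composEpSeries}, and invoking Proposition \ref{prop:perturbSys} — which puts $\nrm{Y^{(0)}}{*p}$, $\nrm{Y^{(k)}}{*p}$, $\nrm{Y^\ep}{*p}$ and $\nrm{R^\ep}{*p}$ into $G(\R)$ uniformly in $\ep$ (for each of the finitely many exponents involved), with $G$-norms increasing in the $\Lip^{l+1}$- and $G_1$-norms of $b$ and $\si$ — yields $r^\ep_{1,p} \in G(\R)$ uniformly in $\ep$, with the stated dependence of $\sup_{\ep}\nrm{r^\ep_{1,p}}{G}$ on $\nrm{g}{G^4}$ and the norms of $b,\si$.

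Next I would check $\E[V^\ep] = 0$: since $Y^{(0)}$ is deterministic (Proposition \ref{prop:perturbSysSMELike}(i)), $\E[\der g(Y^{(0)})Y^{(1)}] = \der g(Y^{(0)})\E[Y^{(1)}] = 0$ by (ii) with $k = 0$; and in the formula for $(g(Y))^{(3)}$ the three summands have zero expectation by (iii), by (iv) together with $\E[Y^{(1)}] = 0$, and by (ii) with $k = 1$, respectively. Finally, since $\E[Y^{(1)}_T] = 0$ gives $\E[(Y^{(1)}_T)^2] = \Var[Y^{(1)}_T]$, we get $\E[Z_T] = \tfrac12 \der^2 g(Y^{(0)}_T)\Var[Y^{(1)}_T] + \der g(Y^{(0)}_T)\E[Y^{(2)}_T]$, so the expression inside the absolute value defining $r^\ep_2$ is exactly $\E\bigl[g(Y^\ep_T) - g(Y^{(0)}_T) - V^\ep_T - \ep^2 Z_T\bigr]$; hence $r^\ep_2 \leq \ep^{-4}\nrm{g(Y^\ep) - g(Y^{(0)}) - V^\ep - \ep^2 Z}{*p} = r^\ep_{1,p}$ for $p = 2$, and the claim about $r^\ep_2$ follows from the previous step.

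I expect the only real obstacle to be the first step: Proposition \ref{prop:composEpSeries} was stated for globally Lipschitz coefficients, so its use on $g$ requires the small adaptation above (replace the Lipschitz bound on the increment of $g$ by the polynomially growing bound coming from $\der g \in G_\ka$, which the subsequent Hölder step absorbs verbatim), and one must verify carefully that the coefficients $(g(Y))^{(k)}$ reassemble into $g(Y^{(0)}) + V^\ep + \ep^2 Z$. Everything downstream is a routine combination of Hölder's inequality with Propositions \ref{prop:perturbSys} and \ref{prop:perturbSysSMELike}.
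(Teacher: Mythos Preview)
Your approach coincides with the paper's: identify $\sum_{k=0}^3 (g(Y))^{(k)}\ep^k = g(Y^{(0)}) + V^\ep + \ep^2 Z$ via Remark~\ref{rem:computingPerturbSDEs}, read off $\E[V^\ep]=0$ from Proposition~\ref{prop:perturbSysSMELike}, and then combine Propositions~\ref{prop:composEpSeries} and~\ref{prop:perturbSys} to control $r^\ep_{1,p}$ and hence $r^\ep_2$. Your caution about the Lipschitz hypothesis of Proposition~\ref{prop:composEpSeries} (and the fix via $\der g\in G_\ka$ and the mean value theorem) is well placed---the paper simply invokes that proposition without flagging this point---so your write-up is, if anything, slightly more careful here.
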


\begin{proof}
As a special case of Remark \ref{rem:computingPerturbSDEs} we have
\begin{align*}
(g(Y))^{(0)} = &g(Y^{(0)}), \\
(g(Y))^{(1)} = &\der_y g(Y^{(0)}) Y^{(1)},\\
(g(Y))^{(2)} = &\der_y g(Y^{(0)}) Y^{(2)} + \frac12 \der_y^2 g(Y^{(0)}) (Y^{(1)})^2, \\
(g(Y))^{(3)} = &\der_y g(Y^{(0)}) Y^{(3)} + \der_y^2 g(Y^{(0)}) Y^{(1)} Y^{(2)} +\frac16\der_y^3 g(Y^{(0)}) (Y^{(1)})^3.
\end{align*}
Thus,
\[\sum_{k=0}^3 (g(Y))^{(k)}\ep^k = g(Y^{(0)}) + V^\ep + \ep^2 Z.\]
From Proposition \ref{prop:perturbSysSMELike} we know that $\E[V^\ep] = 0$.
Propositions \ref{prop:composEpSeries} and \ref{prop:perturbSys} imply $r_{1,p}^\ep \in G(\R),$
uniformly in $\ep \in (0,1)$, for all $p\geq 2$.
Then, it follows readily that $r_2^\ep\in G(\R)$, uniformly in $\ep \in (0,1)$.
\end{proof}
\subsection{Perturbation theory for optimal control of stochastic differential equations}
\label{sec:pertTheoryOptCont}
Proposition \ref{prop:funOfPerturbSeries} ends with a statement on how the polynomial growth constant of a remainder term $r_2^\ep$
depends on various norms, each depending on $b,\si$ and $g$. Similar statements can be found throughout the section. The purpose of these statements is the ability to extend the approximation result to discuss optimal control problems, in which the coefficients of  \eqref{eq:SDEsmallPar} depend on the choice of control. From \ref{prop:funOfPerturbSeries} we can immediately deduce the following.
\begin{cor}
\label{cor:funOfPerturbSeriesIdx}
Let $I$ be a set and $T > 0$. Suppose we are given functions
\begin{align*}
	b : I \times (0,1) \times [0,T]\times \R\times  \to \R, (i, \ep, t, x) \mapsto b^{i,\ep}_t(x), \\
	\si : I \times (0,1) \times [0,T]\times \R \to \R, (i, \ep, t, x) \mapsto \si^{i,\ep}_t(x)
\end{align*}
such that $b_t^{i,\ep}, \si_t^{i,\ep} \in \Lip^4\cap G_1$, uniformly in $i\in I$, $t\in [0,T]$ and $\ep \in (0,1)$, and 
\[\si^{(0)} = \si^{(2)} = b^{(1)} = b^{(3)} = 0.\]
Let $Y$ be the unique solution of the family of \tSDE s (omitting $i$)
\begin{equation}
	dY_t^{\ep} = b^{\ep}_t(Y_t^{\ep})\,dt + \si^{\ep}_t(Y_t^{\ep})\,dW_t, \quad Y^{(0)}\in \R,
\end{equation}
and $(Y^{(0)}, \Var[Y^{(1)}], \E[Y^{(2)}])$ be the unique solution of the family of systems of \tODE s
\begin{align}
dY_t^{(0)} = & b^{(0)}_t(Y_t^{(0)})\,dt, && Y_0^{(0)} = Y_0 \nonumber\\
d\Var[Y_t^{(1)}] = &  2 \der_y b^{(0)}_t(Y_t^{(0)})\Var[Y_t^{(1)}] + \si^{(1)}_t(Y^{(0)}_t)^2  \,dt, && \Var[Y_0^{(1)}] = 0,\nonumber\\
d\E[Y_t^{(2)}] = & b^{(2)}_t(Y^{(0)}_t) + \frac12 \der_y^2 b^{(0)}_t(Y^{(0)}_t)\Var[Y_t^{(1)}]\nonumber \\
	&+ \der_y b^{(0)}_t(Y^{(0)}_t)\E[Y^{(2)}_t]\,dt, && \E[Y_0^{(2)}] = 0.
\end{align}
Then for  every $g\in G^4(\R)$, there exists a $C\in G(\R)$, with
\[\sup_{i\in I}\left|\E g(Y_T^{i,\ep}) - \left(g(Y_T^{i,(0)}) + \ep^2 \frac12 \der_y^2 g(Y^{i,(0)}_T)\Var[Y^{i,(1)}_T] + \der_y g(Y^{i,(0)}_T)\E[Y^{i,(2)}_T]\right)\right| \leq C \ep^4\]
for all $\ep \in (0,1)$.
\end{cor}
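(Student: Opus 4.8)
The plan is to obtain the corollary as an essentially immediate, index-uniform consequence of Proposition~\ref{prop:funOfPerturbSeries}, exploiting the fact — tracked deliberately throughout this subsection — that every remainder bound there is controlled by a \emph{monotone function of a fixed finite list of norms} of the coefficients, rather than by the coefficients themselves.

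\textbf{Step 1 (fix $i$ and invoke the non-indexed result).} Fix $i \in I$. The hypotheses on $b^{i,\cdot}$ and $\si^{i,\cdot}$, namely $b_t^{i,\ep}, \si_t^{i,\ep} \in \Lip^4 \cap G_1$ uniformly in $t$ and $\ep$, together with $\si^{(0)} = \si^{(2)} = b^{(1)} = b^{(3)} = 0$, are exactly those of Proposition~\ref{prop:perturbSysSMELike} with $l = 3$. Hence Propositions~\ref{prop:perturbSysSMELike} and~\ref{prop:funOfPerturbSeries} apply to the triple $(b^{i,\cdot}, \si^{i,\cdot}, g)$: the quantities $Y^{i,(0)}, \Var[Y^{i,(1)}], \E[Y^{i,(2)}]$ solve the stated ODE system, and there is a function $r_2^{i,\ep} \in G(\R)$, uniform in $\ep \in (0,1)$, such that, pointwise in the initial condition,
\[\left| \E g(Y_T^{i,\ep}) - g(Y_T^{i,(0)}) - \ep^2\Big( \tfrac12 \der_y^2 g(Y^{i,(0)}_T)\,\Var[Y^{i,(1)}_T] + \der_y g(Y^{i,(0)}_T)\,\E[Y^{i,(2)}_T] \Big) \right| = \ep^4\, r_2^{i,\ep}.\]

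\textbf{Step 2 (uniformize the remainder over $i$).} By the closing sentence of Proposition~\ref{prop:funOfPerturbSeries}, $\sup_{\ep \in (0,1)} \nrm{r_2^{i,\ep}}{G}$ is bounded above by an increasing function of the $\Lip^4\cap G_1$-norms of $b^{i,\cdot}$ and $\si^{i,\cdot}$ and of $\nrm{g}{G^4}$. Since the hypothesis provides $\sup_{i \in I}\big(\nrm{b^{i,\cdot}}{\Lip^4} + \nrm{b^{i,\cdot}}{G_1} + \nrm{\si^{i,\cdot}}{\Lip^4} + \nrm{\si^{i,\cdot}}{G_1}\big) < \infty$, monotonicity already bounds $\sup_{i}\sup_{\ep} \nrm{r_2^{i,\ep}}{G}$. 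To package this as a single dominating element of $G(\R)$, I also note — by inspecting the proofs of Propositions~\ref{prop:composEpSeries}, \ref{prop:perturbSys} and~\ref{prop:funOfPerturbSeries} — that the polynomial growth exponent $\ka$ realizing $r_2^{i,\ep} \in G_\ka(\R)$ is governed by $l = 3$ alone (it originates from the degrees of the combinatorial polynomials $Y^{(k,n)}$ and of the Taylor remainders $\rho_k^\ep$, not from the magnitudes of the coefficients). Fixing such a $\ka$ once and for all, we set $K := \sup_{i \in I}\sup_{\ep \in (0,1)} \nrm{r_2^{i,\ep}}{G_\ka} < \infty$.

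\textbf{Step 3 (conclude).} Put $C := K\,(1 + |\cdot|^\ka) \in G(\R)$. For each $i \in I$ and $\ep \in (0,1)$, Step~1 together with the definition of $G_\ka$ gives, pointwise in the initial condition,
\[\left| \E g(Y_T^{i,\ep}) - g(Y_T^{i,(0)}) - \ep^2\big(\cdots\big) \right| = \ep^4\, r_2^{i,\ep} \le \ep^4\, \nrm{r_2^{i,\ep}}{G_\ka}\,(1 + |\cdot|^\ka) \le \ep^4\, K\,(1+|\cdot|^\ka) = \ep^4\, C,\]
and, since $C$ does not depend on $i$, taking the supremum over $i \in I$ leaves the right-hand side unchanged, which is precisely the asserted bound.

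\textbf{Main obstacle.} There is essentially no new analytic content: the substantive estimates are those already established in Propositions~\ref{prop:composEpSeries}–\ref{prop:funOfPerturbSeries}, and the index $i$ is inert beyond a final supremum. The one point that genuinely requires care is Step~2 — confirming that not merely the multiplicative constant but also the growth exponent $\ka$ may be chosen independently of $i$, so that the dominating function $C$ really lies in $G(\R)$. This is exactly why the earlier propositions were phrased as ``$\cdot \in G(\R)$, uniformly in $\ep$, with $G$-norm an increasing function of the coefficient norms'' rather than as a bare pointwise $\cO(\ep^4)$ statement.
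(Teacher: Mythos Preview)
Your proposal is correct and matches the paper's approach: the paper records this corollary with the single remark that it can be ``immediately deduced'' from Proposition~\ref{prop:funOfPerturbSeries}, i.e.\ precisely by applying that proposition for each fixed $i$ and then using the monotone dependence of the remainder's $G$-norm on the coefficient norms to pass to a supremum over $i$. Your Step~2 discussion of the growth exponent $\ka$ is a useful clarification that the paper leaves implicit.
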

As a consequence of Corollary \ref{cor:funOfPerturbSeriesIdx} we may transfer deterministic control problems between $Y$ and $(Y^{(0)}, \Var[Y^{(1)}], \E[Y^{(2)}])$.
\begin{cor}
\label{cor:funOfPerturbSeriesOptim}
In the setting of Corollary \ref{cor:funOfPerturbSeriesIdx} the following holds true. For every $g\in G^4(\R)$, which is bounded from below, there exists a $C\in G(\R)$ with
\[\left|\inf_{i\in I}\E g(Y_T^{i,\ep}) - \inf_{i\in I}\left(g(Y_T^{i,(0)}) + \ep^2 \left(\frac12 \der_y^2 g(Y^{i,(0)}_T)\Var[Y^{i,(1)}_T] + \der_y g(Y^{i,(0)}_T)\E[Y^{i,(2)}_T]\right)\right)\right| \leq C \ep^4,\]
for all $\ep \in (0,1)$.
\end{cor}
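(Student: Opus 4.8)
The plan is to reduce the statement directly to Corollary~\ref{cor:funOfPerturbSeriesIdx} by exploiting the elementary fact that the infimum is $1$-Lipschitz with respect to the supremum norm. Write, for $i\in I$ and $\ep\in(0,1)$,
\[a_i^\ep := \E g(Y_T^{i,\ep}), \qquad b_i^\ep := g(Y_T^{i,(0)}) + \ep^2\left(\frac12 \der_y^2 g(Y^{i,(0)}_T)\Var[Y^{i,(1)}_T] + \der_y g(Y^{i,(0)}_T)\E[Y^{i,(2)}_T]\right).\]
Corollary~\ref{cor:funOfPerturbSeriesIdx} already supplies a single $C\in G(\R)$, independent of $i$, with $\sup_{i\in I}|a_i^\ep - b_i^\ep| \le C\ep^4$ for all $\ep\in(0,1)$; the whole proof is then a matter of passing from this uniform pointwise bound to a bound on the two infima.

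First I would check that both infima are finite, which is where the hypothesis that $g$ is bounded from below enters. Since $g \ge \inf_{x\in\R} g(x) > -\infty$, we get $\inf_{i\in I} a_i^\ep \ge \inf_{x\in\R} g(x) > -\infty$, and $a_i^\ep$ is finite for every fixed $i$, so $\inf_{i\in I} a_i^\ep \in \R$. Combining $b_i^\ep \le a_i^\ep + C\ep^4$ with $b_i^\ep \ge a_i^\ep - C\ep^4 \ge \inf_{x\in\R} g(x) - C\ep^4$ then shows $\inf_{i\in I} b_i^\ep \in \R$ as well, so the difference on the left-hand side of the claim is well defined.

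Next I would carry out the standard two-sided estimate. For each fixed $j\in I$ we have $\inf_{i\in I} a_i^\ep \le a_j^\ep \le b_j^\ep + \sup_{i\in I}|a_i^\ep - b_i^\ep|$; taking the infimum over $j$ gives $\inf_{i\in I} a_i^\ep \le \inf_{j\in I} b_j^\ep + \sup_{i\in I}|a_i^\ep - b_i^\ep|$, and the symmetric argument gives $\inf_{i\in I} b_i^\ep \le \inf_{i\in I} a_i^\ep + \sup_{i\in I}|a_i^\ep - b_i^\ep|$. Together these yield
\[\left|\inf_{i\in I}\E g(Y_T^{i,\ep}) - \inf_{i\in I} b_i^\ep\right| \le \sup_{i\in I}|a_i^\ep - b_i^\ep| \le C\ep^4,\]
with the same $C\in G(\R)$ as above, which is exactly the asserted bound; in particular the polynomial-growth dependence of $C$ on the initial value $Y_0$, and its dependence only on the relevant $\Lip$-, $G_1$- and $G^4$-norms, is inherited verbatim from Corollary~\ref{cor:funOfPerturbSeriesIdx} and needs no separate bookkeeping.

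I do not anticipate a genuine obstacle: the content is entirely in Corollary~\ref{cor:funOfPerturbSeriesIdx}, and what remains is the routine observation that $\inf$ contracts the sup-distance. The only two points requiring a word of care are (a) making explicit that the constant furnished by Corollary~\ref{cor:funOfPerturbSeriesIdx} is already uniform over $i\in I$, so that taking infima over $i$ does not reintroduce any $i$-dependence, and (b) recording that "$g$ bounded from below" is used purely to ensure finiteness of the infimum, and hence that the left-hand side is a meaningful real number.
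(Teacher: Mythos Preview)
Your proposal is correct and follows exactly the paper's approach: the paper simply records the elementary inequality $|\inf f - \inf g| \le \sup|f-g|$ for $f,g : I \to \R$ bounded from below and invokes Corollary~\ref{cor:funOfPerturbSeriesIdx}. Your write-up is just a more detailed unpacking of that one line.
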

\begin{proof}	Note that for functions $f,g : I \to \R$, bounded from below, we have
	\[|\inf f - \inf g| \leq \sup|f-g|.\]
	Hence, the result follows from Corollary \ref{cor:funOfPerturbSeriesIdx}.
\end{proof}

\section{Second-order diffusion approximations for SGD}
\label{sec:proofSME2}
In this section we prove a general second-order approximation result for stochastic gradient descent and similar algorithms in higher dimensions. Our approximating equations extends the second-order stochastic modified equation in \cite{li_stochastic_2019} by allowing for time-dependent drift and diffusion coefficients, e.g.\ learning rate or batch size schedules. Moreover, we formulate all our results in such a way that we can apply the diffusion approximation to study optimal control problems (e.g.\ see the last sentence in Theorem \ref{thm:2ndOrderSME}).

\subsection{Main result}
Let $(\Om, \cF_\Om,\P)$ be a complete probability space. Consider a random function
\[f : \Om \times [0,1] \times [0,T]\times \R^d\to \R^d, (\om, h, t, x) \mapsto f_t^{h}(\om)(x),\]
such that $(f_t)_{t\in [0,T]}$ is an independent family.
Let $\cF = (\cF_t)_{t\geq 0}$ be a filtration on $(\Om, \cF_\Om,\P)$ independent of $f$, satisfying the usual conditions and $W$ be an $\R^d$-valued $\cF$-\tBm{}. We consider a parameter $h\in (0,1)$, which acts as discretization parameter or maximal learning rate and is essential in describing the diffusion approximation.

Given an initial value $x\in \R^d$ define the stochastic \emph{one-step method} with \emph{increment function} $f$ by
\begin{equation}
	\label{eq:GSGD}
	\chi_{n+1}^{h} = \chi_n^h + h f_{nh}^{h}(\chi_n^{h}), \quad \chi_0 = x.
\end{equation}

\begin{assum}
	\label{assum:H}
	There exists a random variable $Z$ with with finite moments, such that
	\[|f_t^{h}(x)| \leq Z(1 + |x|), a.s.,\]
	for all $h\in [0,1], t\in [0,T]$ and $x\in \R^d$.
\end{assum}

Further, define
\[\bar f : [0,1]\times [0,T]\times \R^d \to \R^d, (h,t,x) \mapsto \E f_t^{h}(x).\] 
and 
\[V : [0,1]\times[0,T]\times \R^d\to \R^{d\times d}, (h,t,x) \mapsto \E[(f_t^{h}(x) - \bar f_t^{h}(x))^{\otimes 2}].\] 
Here $\label{eq:outSq}z^{\otimes 2} = zz^\dagger$ for any $z\in \R^d$. Since $V$ is positive semi-definite and symmetric, a unique matrix square root $\sqrt V$ exists everywhere. By Assumption \assref{assum:H} we have $\bar f^{h}, \sqrt V^{h} \in G_1([0,T]\times\R^d)$, uniformly in $h$.
\begin{assum}
\label{assum:barHandSi}
We have $\bar f^{h}_t \in \Lip^4$ and $\sqrt V^{h}_t \in \Lip^3$, uniformly in $h$ and $t$, with $\bar f^{h} \in C^{1,4}([0,T]\times \R^d)$ and $\sqrt V^{h}\in C^{0,3}([0,T]\times \R^d)$  for all $h$.
Further, $\der_t f^{h}_t \in G_1 \cap \Lip^3$, uniformly in $h$ and $t$, and $\nrm{g^{h}}{\Lip^\tme}\in G(\R^d)$, uniformly in $h$ , for all $g\in \set{\bar f, \nabla \bar f, \der_t \bar f, \sqrt V}$.
\end{assum}
The conditions on $\bar f$ ensure that the drift coefficient in Equation \ref{eq:2ndOrderSME} below satisfies
\[\bar f_t^{h} - \frac 1 2 h (\nabla \bar f_t^{h} \bar f_t^{h} + \der_t \bar f_t^{h})\in G_1 \cap \Lip^3,\]
uniformly in $h$ and $t$.

The relevance of not assuming that  $\sqrt V$ is differentiable in time is that for volatility control problems it allows optimal controls which are not differentiable, which frequently occur by imposing bounds on the controls.

For all $h \in (0,1)$ we consider the family of \tSDE s
\begin{equation}
	\label{eq:2ndOrderSME}
	dX_t^{h} = \left(\bar f_t^{h}(X_t^{h}) - \frac 1 2 h (\nabla \bar f_t^{h} \bar f_t^{h} + \der_t \bar f_t^{h})(X_t^{h})\right)\,dt +\sqrt{h V_t^{h}}(X_t^{h})\,dW_t,
\end{equation}
where $\nabla g : [0,T]\times \R^d \to \R^{d\times d}$ denotes the Jacobian of a function $g : [0,T]\times \R^d \to \R^d$ in the space variable, i.e.\ $(\nabla g)_{i,j} = \der_{x_j} f_i$ for all $i,j\in \set{1,\dots, d}$. Crucially, observe the occurrence of the $\der_t \bar f$ term in \eqref{eq:2ndOrderSME}. It vanishes if $\bar f$ is constant in $t$. Therefore, this term was not present in previous works such as \cite{li_stochastic_2019}. To exhibit this term we use an \tIto{}-Taylor approximation for a time-inhomogeneous SDEs (cf.\ Proposition \ref{prop:itoTaylor2} and Remark \ref{rem:2ndOrderSMEvsSGD}).
\begin{satz}
\label{thm:2ndOrderSME}
Assume \assref{assum:H} and \assref{assum:barHandSi}. For all $h \in (0,1)$ let $X^h$ be the solution of \eqref{eq:2ndOrderSME}. Then for all $g\in G^3(\R^d)$ and $T > 0$, there exists a $C\in G(\R^d)$, such that 
\[\max_{n\in \set{0,\dots, \floor{T/h}}} |\E g(\GSGD_n^{h}) - \E g(X_{nh}^{h})|\leq Ch^2,\]
for all $h\in (0,1)$. Further, $\nrm{C}{G}$ depends only, and is an increasing function of $\nrm{g}{G}$, $\nrm{Z}{\ka}$ for some large $\ka \in \N$, and
\begin{itemize}
\item $\sup_{\stackrel{t\in [0,T]}{h\in (0,1)}} \nrm{\bar f^h_t}{\Lip^4}, \sup_{\stackrel{t\in [0,T]}{h\in (0,1)}} \nrm{\sqrt V^h}{\Lip^3}$, $\sup_{\stackrel{t\in [0,T]}{h\in (0,1)}} (\nrm{\der_t \bar f^h_t}{\Lip^3} + \nrm{\der_t \bar f^h_t}{G_1})$,\\
\item $\sup_{h\in (0,1)} \nrm{\nrm{\tilde g^{h}}{\Lip^\tme}}{G}$, for all $\tilde g\in \set{\bar f, \nabla \bar f, \der_t \bar f, \sqrt V}$.
\end{itemize}
\end{satz}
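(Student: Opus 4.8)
The plan is the classical one-step-error plus telescoping (``semigroup'') argument for weak convergence of one-step schemes, organised so that every estimate is uniform in $h\in(0,1)$ and all constants are tracked through the coefficient norms (the latter being needed for the control-theoretic corollaries). Write $N=\floor{T/h}$, fix $g\in G^3(\R^d)$, and for $0\le s\le Nh$ let $u(s,x):=\E[g(X_{Nh}^{h})\mid X_s^{h}=x]$ be the backward value function of \eqref{eq:2ndOrderSME}. Let $Q_t^h\psi(x):=\E[\psi(x+hf_t^{h}(x))]$ be the one-step operator of \eqref{eq:GSGD}, and $P_{s,t}^h$ the transition operators of \eqref{eq:2ndOrderSME}; by the Markov property, $u(nh,\cdot)=P_{nh,(n+1)h}^h\,u((n+1)h,\cdot)$. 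Since $\GSGD^h$ and $X^h$ share the starting point and $f_{nh}^{h}$ is independent of $\GSGD_n^h$, a telescoping decomposition gives
\[\E g(\GSGD_N^{h})-\E g(X_{Nh}^{h})=\sum_{n=0}^{N-1}\E\Big[\big(Q_{nh}^h-P_{nh,(n+1)h}^h\big)\,u((n+1)h,\cdot)\,(\GSGD_n^{h})\Big].\]
It therefore suffices to prove a one-step bound $|(Q_t^h-P_{t,t+h}^h)\psi(x)|\le Ch^3(1+|x|^{\ka})$ valid for $\psi=u((n+1)h,\cdot)$, together with uniform-in-$(n,h)$ moment bounds on $\GSGD_n^h$; summing over the $N\le T/h$ steps then yields $Ch^2$, and the maximum over $n$ follows by repeating the argument with horizon $nh$ in place of $Nh$.

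\textbf{One-step comparison.} A third-order Taylor expansion of $\psi$ in the increment $hf_t^h(x)$ gives
\[Q_t^h\psi(x)=\psi(x)+h\bar f_t^h(x)\cdot\nabla\psi(x)+\tfrac{h^2}{2}\,\E[(f_t^h(x))^{\otimes2}]:\nabla^2\psi(x)+O\!\big(h^3(1+|x|^{\ka})\big),\]
with $\E[(f_t^h(x))^{\otimes2}]=V_t^h(x)+(\bar f_t^h(x))^{\otimes2}$ and the remainder controlled via \assref{assum:H}. On the diffusion side I would invoke the time-inhomogeneous \tIto-Taylor expansion (Proposition~\ref{prop:itoTaylor2}) for $P_{t,t+h}^h\psi(x)=\E[\psi(X_{t+h}^h)\mid X_t^h=x]$, carried far enough to exhibit all $h^1$ and $h^2$ contributions with an $O(h^3(1+|x|^{\ka}))$ remainder; these arise from $\int_t^{t+h}(\cL_s^h\psi)(x)\,ds$ and the once-iterated term $\int_t^{t+h}\!\int_t^s(\cL_r^h\cL_s^h\psi)(x)\,dr\,ds$, where $\cL_s^h\psi=(\bar f_s^{h}-\tfrac h2(\nabla\bar f_s^{h}\bar f_s^{h}+\der_t\bar f_s^{h}))\cdot\nabla\psi+\tfrac h2 V_s^{h}:\nabla^2\psi$ is the generator of \eqref{eq:2ndOrderSME}. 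Expanding and matching with the SGD expansion, the $h^1$ terms agree trivially, and the $h^2$ terms cancel \emph{precisely} because of the two $O(h)$ corrections in the drift of \eqref{eq:2ndOrderSME}: the $-\tfrac h2\der_t\bar f_t^h$ term cancels the $\tfrac{h^2}{2}\der_t\bar f_t^h\cdot\nabla\psi$ coming from the time-inhomogeneity (i.e.\ from $\int_t^{t+h}\bar f_s^h(x)\,ds=h\bar f_t^h(x)+\tfrac{h^2}{2}\der_t\bar f_t^h(x)+O(h^3)$), and the $-\tfrac h2\nabla\bar f_t^h\bar f_t^h$ term cancels the $(\nabla\bar f_t^h\bar f_t^h)\cdot\nabla\psi$ produced by the iterated term $\tfrac{h^2}{2}(\bar f_t^h\cdot\nabla)^2\psi$; what survives on both sides is $\psi(x)+h\bar f_t^h(x)\cdot\nabla\psi(x)+\tfrac{h^2}{2}(V_t^h(x)+(\bar f_t^h(x))^{\otimes2}):\nabla^2\psi(x)$. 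Two points are essential: first, replacing the time-averaged coefficients by their values at $t$ costs only $O(h^3(1+|x|^{\ka}))$ because $\der_t\bar f^h$, $\nabla\bar f^h$ and $\sqrt{V^h}$ are Lipschitz in time with polynomially growing constants (the $\Lip^\tme$ hypotheses in \assref{assum:barHandSi})---this is what upgrades a mere $o(h^2)$-per-step estimate, which would only give first order globally, to $O(h^3)$; second, since $V^h$ enters the diffusion through the factor $\sqrt h$, only three space derivatives of $\psi$---hence only $g\in G^3$---are needed. Tracking constants, one obtains the one-step bound with $C$ an increasing function of $\nrm{g}{G^3}$, $\nrm{Z}{\ka}$, and the coefficient norms in the statement.

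\textbf{Regularity of the value function and moment bounds.} It remains to establish (a) $\sup_{0\le s\le Nh}\nrm{u(s,\cdot)}{G^3}\le C$, uniformly in $h$ and $N$, and (b) $\sup_{n\le N}\nrmps{\GSGD}{p,n}<\infty$, uniformly in $h$, for every $p\ge2$. For (b), \assref{assum:H} gives $1+|\GSGD_{n+1}^h|\le(1+hZ)(1+|\GSGD_n^h|)$ a.s., so a discrete Gronwall estimate together with the finiteness of the moments of $Z$ yields the claim. For (a), I would differentiate the stochastic flow of \eqref{eq:2ndOrderSME} with respect to the initial condition up to order three: the derivative processes solve linear \tSDE s whose coefficients involve the space-derivatives of $\bar f^h$ and $\der_t\bar f^h$ (up to order four) and of $\sqrt{V^h}$ (up to order three), which are of linear growth uniformly in $h$ and $t$ by \assref{assum:barHandSi} (this is where $\bar f\in\Lip^4$ and $\sqrt V\in\Lip^3$ enter, the drift already containing $\nabla\bar f$); the standard $L^p$-flow estimates---uniform in $h$ because the $\sqrt h$-scaling of the diffusion only helps---and the chain rule then give $\der_x^\gamma u(s,x)=\E[\der_x^\gamma(g(X_{Nh}^{h}))]$ and $|\der_x^\gamma u(s,x)|\le C(1+|x|^{\ka})\nrm{g}{G^3}$ for $|\gamma|\le3$. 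Inserting (a) and (b) into the telescoping sum and applying Hölder's inequality bounds each of the $N\le T/h$ summands by $Ch^3$, so the total is $\le CTh^2$ with $C$ an increasing function of precisely the quantities in the statement; running this with horizon $nh$ for each $n\le N$ gives the bound on the maximum.

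\textbf{Main obstacle.} The technical heart is step~(a): proving the $G^3$-bounds on $u$ \emph{uniformly in $h\in(0,1)$ and uniformly over the horizon $Nh\le T$}, and verifying that the \tIto-Taylor remainder and the flow-differentiation estimates depend on $\bar f$ and $\sqrt V$ only through the norms listed in the statement. One must check that neither the $\sqrt h$-scaled, merely time-continuous diffusion coefficient nor the $h$-dependent drift correction spoils any of these estimates, and that the one-step analysis is arranged so that the time-oscillation errors are genuinely $O(h^3)$ per step rather than $o(h^2)$---which is exactly what the $\Lip^\tme$ part of \assref{assum:barHandSi} and the precise form of the drift correction in \eqref{eq:2ndOrderSME} deliver, and what makes the result usable for the optimal-control problems in which the coefficients vary with the control.
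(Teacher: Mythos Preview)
Your proposal is correct and follows essentially the same architecture as the paper: the semigroup/telescoping decomposition (Proposition~\ref{prop:oneStep2ManyStep}), the one-step comparison built on the time-inhomogeneous \tIto--Taylor expansion (Proposition~\ref{prop:itoTaylor2}), the SGD moment bounds via discrete Gronwall (Lemma~\ref{lem:linGrowthEstSGD}), and the $G^3$-regularity of the value function via differentiation of the stochastic flow (Theorem~\ref{thm:SDEDer} and Proposition~\ref{prop:EgXPolyGrowth}). The only organisational difference is that the paper first applies Proposition~\ref{prop:itoTaylor2} with the polynomial test functions $z\mapsto(z-x)_l$ and $z\mapsto(z-x)_k(z-x)_l$ to extract the increment \emph{moments} $\E[\Delta\tilde X]$ and $\E[(\Delta\tilde X)^{\otimes2}]$ (Remark~\ref{rem:2ndOrderSMEvsSGD}) and then feeds the resulting moment-matching into a separate Taylor-of-$g$ lemma (Proposition~\ref{prop:weakOneStepDiff}), whereas you apply the \tIto--Taylor expansion directly with $\psi=u((n{+}1)h,\cdot)$; the computations are the same, and both routes require exactly $u\in G^3$ and the $\Lip^\tme$ hypotheses to turn the per-step error into a genuine $O(h^3)$.
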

The proof of Theorem \ref{thm:2ndOrderSME} is postponed to Subsection \ref{sec:proofSME2proof}.

\subsection{Diffusion approximations for optimal control}
Similar to Subsection \ref{sec:pertTheoryOptCont},
Theorem \ref{thm:2ndOrderSME} ends with a statement on how the polynomial growth constant of
\[h^{-2}\max_{n\in \set{0,\dots, \floor{T/h}}} |\E g(\GSGD_n^{h}) - \E g(X_{nh}^{h})|\]
depends on various norms, each depending on $f$ and $g$. 

Consider now an index set $I$ and an $I$-indexed family of random functions 
\[f : \Om \times I \times [0,1] \times [0,T]\times \R^d\to \R^d, (\om, h, t, x) \mapsto f_t^{i,h}(\om)(x).\]
Suppose every statement in \assref{assum:H} and \assref{assum:barHandSi} holds, \emph{uniformly} in $i\in I$. Then we can directly deduce the following.
\begin{cor}
\label{cor:2ndOrderSMEIdx}
For all $h \in (0,1)$ and $i\in I$ let $X^{i,h}$ be the solution of the \tSDE{}
\begin{equation}
\label{eq:2ndOrderSMEIdx}
dX_t^{i,h} = \left(\bar f_t^{i,h}(X_t^{i,h}) - \frac 1 2 h (\nabla \bar f_t^{i,h} \bar f_t^{i,h} + \der_t \bar f_t^{i,h})(X_t^{i,h})\right)\,dt +\sqrt{h V_t^{i,h}}(X_t^{i,h})\,dW_t.
\end{equation} Then for all $g\in G^3(\R^d)$ and $T > 0$, there exists a $C\in G(\R^d)$, such that 
\[\sup_{i\in I} \max_{n\in \set{0,\dots, \floor{T/h}}} |\E g(\GSGD_n^{i,h}) - \E g(X_{nh}^{i,h})|\leq Ch^2,\]
for all $h\in (0,1)$.
\end{cor}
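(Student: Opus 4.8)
The plan is to deduce the statement directly from Theorem \ref{thm:2ndOrderSME}, applied separately for each index $i\in I$, and then to upgrade the resulting family of growth constants to a single constant by exploiting the quantitative dependence recorded at the end of that theorem. Fix $i\in I$. By hypothesis the random function $f^{i,\blnk}$ satisfies every statement in \assref{assum:H} and \assref{assum:barHandSi}, so the associated quantities $\bar f^{i,h}=\E f^{i,h}$ and $V^{i,h}=\E[(f^{i,h}-\bar f^{i,h})^{\otimes 2}]$ are well defined, the linear growth and Lipschitz conditions guarantee that \eqref{eq:2ndOrderSMEIdx} has a unique solution $X^{i,h}$, and \eqref{eq:2ndOrderSMEIdx} is literally \eqref{eq:2ndOrderSME} for the increment function $f^{i,\blnk}$. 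Hence Theorem \ref{thm:2ndOrderSME} yields, for the given $g\in G^3(\R^d)$ and $T>0$, a function $C^i\in G(\R^d)$ with
\[
\max_{n\in \set{0,\dots,\floor{T/h}}}\bigl|\E g(\GSGD_n^{i,h})-\E g(X_{nh}^{i,h})\bigr|\leq C^i h^2,\quad h\in (0,1).
\]

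Next I would invoke the last part of Theorem \ref{thm:2ndOrderSME}: there is a fixed exponent $\kappa\in \N$, coming from the moment estimates inside the proof of that theorem and therefore \emph{independent of $i$}, together with an increasing function $\Phi$ of its arguments, such that $\nrm{C^i}{G_\kappa}\leq \Phi(\,\cdot\,)$, where the arguments are $\nrm{g}{G}$, $\nrm{Z}{\kappa}$, the suprema over $t\in[0,T]$ and $h\in(0,1)$ of the $\Lip^4$-, $\Lip^3$- and $G_1$-norms of $\bar f^{i,h}$, $\nabla\bar f^{i,h}$, $\der_t\bar f^{i,h}$, $\sqrt V^{i,h}$, and $\sup_{h\in(0,1)}\nrm{\nrm{\tilde g^{i,h}}{\Lip^\tme}}{G}$ for $\tilde g\in\set{\bar f,\nabla\bar f,\der_t\bar f,\sqrt V}$. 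The assumption that \assref{assum:H} and \assref{assum:barHandSi} hold \emph{uniformly} in $i$ is precisely the statement that each of these arguments stays finite after also taking the supremum over $i\in I$. Since $\Phi$ is increasing, this forces $\sup_{i\in I}\nrm{C^i}{G_\kappa}<\infty$, and in particular all the $C^i$ lie in the single space $G_\kappa(\R^d)$.

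It then suffices to put $C(x):=\bigl(\sup_{i\in I}\nrm{C^i}{G_\kappa}\bigr)(1+|x|^\kappa)$, which lies in $G(\R^d)$ and dominates every $C^i$ pointwise, so that
\[
\sup_{i\in I}\max_{n\in \set{0,\dots,\floor{T/h}}}\bigl|\E g(\GSGD_n^{i,h})-\E g(X_{nh}^{i,h})\bigr|\leq C h^2,\quad h\in (0,1),
\]
as claimed. The only point requiring any care — and it is the single delicate step — is checking that the polynomial-growth exponent $\kappa$ of the $C^i$ can be taken the same for all $i$; this is ensured because $\kappa$ is an absolute structural constant in Theorem \ref{thm:2ndOrderSME} and $\nrm{Z}{\kappa}$ is finite (uniformly in $i$) by \assref{assum:H}. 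Beyond that there is no genuine analytic obstacle: Theorem \ref{thm:2ndOrderSME} was deliberately phrased with its constant controlled by norms of $f$ and $g$ exactly so that this uniform-in-$i$ passage is pure bookkeeping.
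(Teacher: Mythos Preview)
Your proposal is correct and follows precisely the route the paper intends: the corollary is stated as a direct consequence of Theorem \ref{thm:2ndOrderSME}, and the quantitative dependence of $\nrm{C}{G}$ on the norms of $f$ and $g$ recorded there is exactly what makes the passage to a uniform-in-$i$ constant pure bookkeeping. Your observation that the growth exponent $\kappa$ is a structural constant independent of $i$ is the one nontrivial check, and you handle it correctly.
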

As a consequence of \ref{cor:2ndOrderSMEIdx} we may transfer deterministic control problems between the one-step method $\chi$ and its diffusion approximation.
\begin{cor}
\label{cor:2ndOrderSMEOptim}
For all $h \in (0,1)$ let $X$ be the solution of \eqref{eq:2ndOrderSMEIdx}. Then for all $g\in G^3(\R^d)$, which are bounded from below, and $T > 0$, there exists a $C\in G(\R^d)$, such that 
\[\max_{n\in \set{0,\dots, \floor{T/h}}} |\inf_{i\in I}\E g(\GSGD_n^{i,h}) - \inf_{i\in I} \E g(X_{nh}^{i,h})|\leq Ch^2,\]
for all $h\in (0,1)$.
\end{cor}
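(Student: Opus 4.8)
The plan is to reduce the statement to the uniform bound already established in Corollary \ref{cor:2ndOrderSMEIdx}, using nothing more than the elementary fact that for two real-valued functions $\phi,\psi$ on a common nonempty index set, both bounded from below, one has $|\inf \phi - \inf \psi|\leq \sup|\phi - \psi|$. This is exactly the mechanism used in the proof of Corollary \ref{cor:funOfPerturbSeriesOptim}, and the present statement is its analogue for the discrete-to-continuous comparison rather than the perturbation expansion.

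Concretely, I would fix $g\in G^3(\R^d)$ bounded from below and $T>0$, and for each $h\in(0,1)$ and each $n\in\set{0,\dots,\floor{T/h}}$ consider the two functions $\phi_n^h : I \to \R$, $i\mapsto \E g(\GSGD_n^{i,h})$, and $\psi_n^h : I \to \R$, $i\mapsto \E g(X_{nh}^{i,h})$ (as usual these are understood as functions of the common initial value, as in the preliminaries). Since $g$ is bounded from below, so are $\phi_n^h$ and $\psi_n^h$, whence $\inf_{i\in I}\phi_n^h(i)$ and $\inf_{i\in I}\psi_n^h(i)$ are well-defined real numbers; here the case $I=\varnothing$ is vacuous and may be excluded. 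Corollary \ref{cor:2ndOrderSMEIdx} furnishes a single $C\in G(\R^d)$, independent of $i$, $n$ and $h$, with $\sup_{i\in I}|\phi_n^h(i)-\psi_n^h(i)|\leq C h^2$ for every $n\leq \floor{T/h}$ and $h\in(0,1)$. Applying the elementary inequality above to $\phi_n^h$ and $\psi_n^h$ gives $|\inf_{i\in I}\phi_n^h(i)-\inf_{i\in I}\psi_n^h(i)|\leq C h^2$, and since the right-hand side does not depend on $n$, taking the maximum over $n\in\set{0,\dots,\floor{T/h}}$ yields the asserted estimate with the very same $C$.

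There is no genuine obstacle at this stage: all the analytic content — the time-inhomogeneous Itô–Taylor expansion, the moment and perturbation-of-initial-condition estimates, and above all the uniformity of the comparison constant over the control index $i$ — has already been absorbed into Theorem \ref{thm:2ndOrderSME} and Corollary \ref{cor:2ndOrderSMEIdx}. The only points worth stating explicitly are (a) that the constant from Corollary \ref{cor:2ndOrderSMEIdx} already controls the \emph{maximum over $n$}, so no new dependence on $n$ is introduced when passing to infima, and (b) that boundedness from below of $g$ is precisely what makes the infima finite and the inf–inf inequality legitimate. Accordingly the proof is a one-line reduction analogous to that of Corollary \ref{cor:funOfPerturbSeriesOptim}.
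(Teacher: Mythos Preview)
Your proposal is correct and matches the paper's approach: the paper gives no explicit proof of this corollary, but it is clearly intended as the exact analogue of Corollary~\ref{cor:funOfPerturbSeriesOptim}, i.e.\ apply the elementary inequality $|\inf f - \inf g|\leq \sup|f-g|$ to the uniform bound from Corollary~\ref{cor:2ndOrderSMEIdx}. Your remarks on why boundedness from below is needed and why no new $n$-dependence arises are accurate and helpful.
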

In the following remark we give simple conditions for SGD, featuring a learning rate- and a (continuous) batch size schedule, to satisfy \assref{assum:barHandSi} \emph{uniformly} in the choice of schedules.
\begin{rem}
	\label{rem:simplAssum}
	Let $L > 0$ and consider the following index set of pairs consisting of a learning rate control and a volatility control
	\begin{align*}
		I = &\set{u : [0,T] \to [0,1] : u\in C^1, \nrm{u}{\Lip}, \nrm{\der_t u}{\Lip} \leq L} \\
		&\times \set{\al : [0,T]\to [0,1] : \nrm{\sqrt \al}{\Lip} \leq L}.
	\end{align*}
	Suppose there exist functions $H : \R^d \to \R^d$ and $S : \R^d\to \R^{d\times d}$, such that
	\[\bar f_t^u(x) = u_t H(x), \quad V_t^{\al}(x) = \al_t S(x),\]
	satisfying
	\[H \in G_1\cap \Lip^4, \sqrt S \in G_1 \cap \Lip^3.\]
	Then,
	\begin{align*}
		|\der^\al \bar f_t(x) - \der^\al \bar f_t(y)| \leq & \nrm{H}{\Lip^4} |x-y|, \quad |\al|\leq 4,\\
		|\bar f_t(x) - \bar f_s(x)| \leq & L \nrm{H}{G_1} |t-s| (1 + |x|),\\
		|\nabla \bar f_t(x) - \nabla \bar f_s(x)| \leq & L \nrm{\nabla H}{\infty} |t-s| \\
		= & L \nrm{H}{\Lip} |t-s|, \\
		|\der_t \bar f_t(x) - \der_t \bar f_s(x)| \leq & L\nrm{H}{G_1} |t-s|( 1 + |x|), \quad |\al|\leq 3,\\
		|\der^\al\der_t \bar f_t(x) - \der^\al \der_t \bar f_t(y)| \leq & L \nrm{H}{\Lip^3}|x-y|, \quad |\al|\leq 3,\\
		|\der^\al \sqrt{V_t(x)} - \der^\al \sqrt{V_t(y)}| \leq & \nrm{\sqrt S}{\Lip^3}|x-y|, \quad |\al|\leq 3,\\
		|\sqrt{V_t(x)} - \sqrt{V_s(x)}| \leq & L \nrm{\sqrt{S}}{G_1} |t-s| (1 + |x|),
	\end{align*}
for all $x,y\in \R^d$ and $s,t\in [0,T]$.
	Hence, $\bar f$ and $\sqrt V$ satisfy Assumption \assref{assum:barHandSi}, uniformly in $(u,\al) \in I$.
\end{rem}

\subsection{Results from stochastic analysis}
Here we collect minor extensions to well known results from stochastic analysis to make the proofs of our main results self-contained. We consider \tSDE s with coefficients
\[b : [0,T] \times \R^d \to \R^d, S : [0,T]\times \R^d\to \R^{d\times d}.\]
\begin{satz}
\label{thm:SDESol}
Suppose $b_t,S_t \in G_1 \cap \Lip$, uniformly in $t$. Then, for every $p\geq 2, T > 0$ and random field $\ph : \Om \times [0,T]\times \R^d \to \R^d$ with $\nrmps{\ph}{p} < \infty$, the \tSDE{}
\[dX_t = b_t(X_t)\,dt + S_t(X_t)\,dW_t, \quad X_0 = \ph\]
admits a unique\footnote{Of course, we mean unique up to indistinguishability.} solution $X$ on $[0,T]$, such that the family of solutions $X = (X_t)_{t\geq 0}$ satisfies
\[\nrmps{X}{p} \lesssim 1 + \nrmps{\ph}{p}.\]
The constant factor on the RHS depends only on, and is an increasing function of the $G_1$- and $\Lip$- norms of $b$ and $S$.
\end{satz}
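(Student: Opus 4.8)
The plan is the classical Picard iteration, carried out at the level of random fields and with all constants tracked through the $G_1$- and $\Lip$-norms of $b$ and $S$. The three ingredients are: an a priori moment bound (which also gives the asserted growth estimate), a contraction/Cauchy argument for existence together with a Gronwall argument for uniqueness, and finally a Kolmogorov-type argument to assemble the pointwise-in-$x$ solutions into a genuine random field.

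\emph{A priori bound.} First I would record the moment estimate. If $X$ solves the equation on $[0,t]$ with $X_0 = \ph$, then taking $|\blnk|^p$, applying the Burkholder--Davis--Gundy inequality to the stochastic integral and Hölder's inequality in the time variable, the linear growth bound $|b_s(x)| + |S_s(x)| \lesssim 1 + |x|$ (with implied constant controlled by $\nrm{b}{G_1} + \nrm{S}{G_1}$) yields
\[
\nrmpsp{X}{p,t}{p} \lesssim \nrmpsp{\ph}{p}{p} + \int_0^t \left(1 + \nrmpsp{X}{p,s}{p}\right)\,ds .
\]
Gronwall's lemma then gives $\nrmps{X}{p} \lesssim 1 + \nrmps{\ph}{p}$, the constant being an increasing function of $\nrm{b}{G_1} + \nrm{S}{G_1}$ (and of $p, d, T$). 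The same computation applied to the iterates below keeps them in $\{Y : \nrmps{Y}{p} < \infty\}$.

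\emph{Existence and uniqueness.} Set $X^{[0]} := \ph$ and
\[
X^{[n+1]}_t := \ph + \int_0^t b_s(X^{[n]}_s)\,ds + \int_0^t S_s(X^{[n]}_s)\,dW_s ;
\]
each $X^{[n]}$ is continuous and adapted, and — since $\ph$ and the coefficients are jointly measurable in the space variable — a random field. Subtracting consecutive iterates and using the Lipschitz bounds $|b_s(x) - b_s(y)| + |S_s(x) - S_s(y)| \le (\nrm{b}{\Lip} + \nrm{S}{\Lip})|x - y|$, BDG and Hölder give
\[
\nrmpsp{X^{[n+1]} - X^{[n]}}{p,t}{p} \lesssim \int_0^t \nrmpsp{X^{[n]} - X^{[n-1]}}{p,s}{p}\,ds ,
\]
so that $\sum_n \nrmps{X^{[n+1]} - X^{[n]}}{p} < \infty$ and $(X^{[n]})$ converges in $\nrmps{\blnk}{p}$; passing to the limit in the recursion (the stochastic integral is $\nrmps{\blnk}{p}$-continuous by BDG) shows the limit $X$ solves the SDE, and the a priori bound gives the growth estimate. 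Uniqueness follows from the same inequality: two solutions with the same initial data satisfy $\nrmpsp{X - \tilde X}{p,t}{p} \lesssim \int_0^t \nrmpsp{X - \tilde X}{p,s}{p}\,ds$, whence $X = \tilde X$ up to indistinguishability by Gronwall.

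\emph{The random field and the main obstacle.} The genuinely delicate point — and the part I expect to take real care — is that the construction above produces, for each fixed space parameter $x$, a solution defined only up to a $\P$-null set depending on $x$, so a patching step is needed to get a bona fide random field $X : \Om \times [0,T] \times \R^d \to \R^d$. Here I would apply Kolmogorov's continuity theorem: running the a priori estimate on the equation started from $x$ and from $y$ and using the Lipschitz property gives $\nrmpsp{X_t(x) - X_s(y)}{p}{p} \lesssim (|t-s| + |x-y|)^p$ locally uniformly, so for $p$ large enough there is a modification that is jointly continuous in $(t,x)$, hence jointly measurable; this modification coincides with $X(x)$ a.s.\ for every $x$ and therefore still solves the equation for every $x$. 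Everything else is the textbook fixed-point argument; the only thing to keep in view is that every constant appearing in the BDG/Hölder/Gronwall chain depends only on $\nrm{b}{G_1}, \nrm{S}{G_1}, \nrm{b}{\Lip}, \nrm{S}{\Lip}$ (and $p, d, T$), which is automatic since these are the sole features of the coefficients that enter those estimates.
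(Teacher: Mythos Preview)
Your proposal is correct and spells out in full the standard Picard/BDG/Gronwall argument. The paper, by contrast, does not give a proof at all: it simply records that this is a classical result and cites Kunita (2004, Theorems 3.1 and 3.2) for the SDE with deterministic initial point, and Li et al.\ (2019, Theorems 18 and 19) for the extension to a random initial field $\ph$. In other words, the paper defers entirely to the literature, whereas you reproduce the content of those references; the mathematical substance is the same.
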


\begin{proof}
This essentially a standard result, cf.\ \cite{Kunita2004} Theorem 3.1 and 3.2 for example. The extension to from an initial value $x\in \R^d$ to a process $\ph$ is discussed in \cite{li_stochastic_2019} Theorem 18 and 19.
\end{proof}

\begin{satz}
\label{thm:SDEDer}
Let $l\in \N, p\geq 1$ and suppose $b_t,S_t \in G_1\cap \Lip^l$, uniformly in $t$. Let $x\in \R^d, s\in [0,T]$ and $X$ be the unique solution to the family of \tSDE s
\[dX_t = b_t(X_t)\,dt + S_t(X_t)\,dW_t.\]
Then $X$ is $l$-times continuously differentiable w.r.t. to the initial condition $x$ at any $(t,x)\in [s,T]\times \R^d$, a.s. and for every multi-index $\al$ with $0 < |\al| \leq l$, $\der^\al X$ satisfies the \tSDE{}
\[\der^\al X_t = \psi_\al + \int_s^t \nabla b_u(X_u)\der^\al X_u\,du + \int_s^t \nabla S_u(X_u)\der^\al X_u\,dW_u,\]
where $\nrmps{\psi_\al}{p}\in G(\R^d)$ for all $p\geq 2$. Moreover,
\[\E(\der^\al X_t) = \der^\al \E(X_t),\]
for all $t\geq 0$. Further, $\nrm{\nrmps{\psi_\al}{p}}{G}$ depends only on, and is an increasing function of the $G_1$- and $\Lip^l$-norms of $b$ and $S$.
\end{satz}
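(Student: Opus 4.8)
The plan is to argue by induction on $l$ (equivalently, on $|\al|$), the base case $l = 1$ being the differentiability of SDE flows with respect to the initial condition, which is essentially classical (cf.\ \cite{Kunita2004}, \cite{li_stochastic_2019}) but which I would redo in order to track the dependence of constants on $\nrm{b}{\Lip^1}$ and $\nrm{S}{\Lip^1}$. For $l = 1$, fix a coordinate direction $e_j$ and consider the difference quotients $\Delta_t^{\ep} := \ep^{-1}\big(X_t(x + \ep e_j) - X_t(x)\big)$. Subtracting the integral equations for $X(x + \ep e_j)$ and $X(x)$ and writing $b_u(X_u(x+\ep e_j)) - b_u(X_u(x)) = \ep\,A_u^{\ep}\,\Delta_u^{\ep}$ with $A_u^{\ep} := \int_0^1 \nabla b_u\big(X_u(x) + \tau\ep\,\Delta_u^{\ep}\big)\,d\tau$, and likewise a matrix $B_u^{\ep}$ for $S$, one sees that $\Delta^{\ep}$ solves a \emph{linear} SDE with coefficients $A^{\ep}, B^{\ep}$ bounded by $\nrm{b}{\Lip^1}$ and $\nrm{S}{\Lip^1}$ (here $b_t$ being Lipschitz forces $\nabla b_t$ bounded, and likewise $\nabla S_t$) and with initial value $e_j$. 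A Burkholder--Davis--Gundy plus Gronwall estimate gives $\sup_{\ep}\nrmps{\Delta^{\ep}}{p} < \infty$; combined with the $L^p$-Lipschitz continuity $\nrmps{X(x') - X(x)}{p} \lesssim |x' - x|$ (standard, from the Lipschitz bounds on $b, S$, cf.\ the proof of Theorem \ref{thm:SDESol}) and the continuity of $\nabla b, \nabla S$, a second Gronwall argument shows that $\Delta^{\ep}$ converges in $\nrmps{\cdot}{p}$ to the process $\der_j X$ solving
\[
\der_j X_t = e_j + \int_s^t \nabla b_u(X_u)\,\der_j X_u\,du + \int_s^t \nabla S_u(X_u)\,\der_j X_u\,dW_u ,
\]
so $\psi_{e_j} = e_j \in G(\R^d)$, and, the equation being linear with bounded coefficients, Theorem \ref{thm:SDESol} yields $\nrmps{\der_j X}{p} \in G(\R^d)$ with the asserted dependence of constants.

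For the induction step, assume the claim for all multi-indices of size $< |\al|$, where $|\al| \le l$, and differentiate the variational equation for $\der^{\al - e_j} X$ once more in direction $e_j$, repeating the base-case difference-quotient argument one level up. Expanding $\der^\al\big(b_u(X_u)\big)$ by the Faà di Bruno / Leibniz formula gives $\der^\al\big(b_u(X_u)\big) = \nabla b_u(X_u)\,\der^\al X_u + P^{b}_\al(u)$, where $P^{b}_\al(u)$ is a finite sum of terms of the form $\der^\gamma b_u(X_u)\prod_i \der^{\be_i} X_u$ with $2 \le |\gamma| \le |\al|$ and all $\be_i < \al$ (and similarly a term $P^{S}_\al$ for $S$). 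Hence $\der^\al X$ solves the linear SDE stated in the theorem, with $\psi_\al$ the forcing process $\psi_\al(t) = \int_s^t P^{b}_\al(u)\,du + \int_s^t P^{S}_\al(u)\,dW_u$. The only genuinely new analytic ingredient relative to the base case is the $L^p$-continuity in $x$ of the forcing $P^{b/S}_\al$, which is immediate from the induction hypothesis together with the $L^p$-continuity in $x$ of the processes $\der^{\be} X$, $\be < \al$; the a.s.\ \emph{continuous} differentiability in $x$ then follows from $L^p$-continuity of $\der^\al X$ in $x$ (for all $p$) via the Kolmogorov continuity criterion.

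The bound $\nrmps{\psi_\al}{p} \in G(\R^d)$ is then Hölder-inequality bookkeeping: since $b_t, S_t \in \Lip^l$, every derivative $\der^\gamma b_t, \der^\gamma S_t$ with $1 \le |\gamma| \le l$ is \emph{bounded}, so each summand of $P^{b}_\al(u)$ is a bounded function of $X_u$ times finitely many factors $\der^{\be_i} X_u$ with $\be_i < \al$; hence $\nrm{P^{b}_\al(u)}{p} \lesssim \prod_i \nrmps{\der^{\be_i} X}{q_i}$ for suitable exponents $q_i$, which lies in $G(\R^d)$ by the induction hypothesis, and the Itô term $\int_s^t P^{S}_\al\,dW$ is handled identically after a Burkholder--Davis--Gundy estimate. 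Propagating through the finitely many Hölder/BDG/Gronwall steps shows that $\nrm{\nrmps{\psi_\al}{p}}{G}$ is an increasing function of the $G_1$- and $\Lip^l$-norms of $b$ and $S$ alone. Finally, $\E(\der^\al X_t) = \der^\al \E(X_t)$ follows by interchanging the difference quotients with $\E$ at each differentiation step: the quotients converge in $L^1$ and are $L^1$-bounded uniformly for $x'$ in a neighbourhood of $x$, so dominated convergence applies.

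The step I expect to be the main obstacle is the base case — upgrading the a.s.\ differentiability of the flow to $L^p$-convergence of the difference quotients and justifying differentiation under the Itô integral sign; this is where the Burkholder--Davis--Gundy and Gronwall estimates carry the load. Once it is in place, the higher-order statement is comparatively soft: every variational equation is linear with bounded coefficients, so the same machinery applies verbatim, and what remains is the combinatorial Faà di Bruno bookkeeping and the routine (if tedious) verification that the constants depend only on the coefficient norms.
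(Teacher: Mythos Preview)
Your proposal is correct and follows the same overall architecture as the paper --- induction on the order of the derivative, with the observation that each variational equation is linear with bounded coefficients --- but the induction step is organised differently. The paper does not redo the difference-quotient analysis at each level; instead it stacks $X$ together with all its derivatives up to order $l-1$ into a single enlarged state vector $Y$, observes that $Y$ itself solves an SDE whose coefficients (built from $b, S$ and their derivatives up to order $l-1$) again satisfy the hypotheses of Kunita's first-order differentiability theorem, and then applies that theorem once more to $Y$ to obtain the order-$l$ derivatives. Your route via explicit Fa\`a di Bruno expansion and direct BDG/Gronwall estimates is more elementary and more transparent about where the dependence of $\nrm{\nrmps{\psi_\al}{p}}{G}$ on the $G_1$- and $\Lip^l$-norms comes from (a point the paper's proof leaves implicit in the citation), at the cost of repeating the analytic machinery rather than invoking it as a black box. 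Either approach yields the result; the paper's is shorter to write, yours is closer to self-contained.
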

\begin{proof}
For the proof cf.\ \cite{Kunita2004} Theorem 3.4. More specifically, for every $l\in \N$, assuming the result holds for all $l' < l$ define
\begin{align*}
	Y := (X, \der_1 X, \dots \der_d X, \der_{1,1} X, \dots, \der_{1,d} X, \der_{2,1} X, \dots, \der_{d,\dots, d} X)^\dagger,
\end{align*}
where the last partial derivative is of the order $l-1$. Then $Y$ satisfies the \tSDE{}
\begin{align*}
	Y = &\mat{x\\e_1\\\vdots \\0} + \mat{0\\\psi_1\\\vdots \\\psi_{d,\dots, d}} + \int_s^t\mat{b_u(X_u)\\ \nabla b_u(X_u)\der_1 X_u \\ \vdots \\ \nabla^{l-1} b_u(X_u)\der_{d,\dots, d} X_u}\,du \\
	&+ \int_s^t\mat{S_u(X_u)\\ \nabla S_u(X_u)\der_1 X_u \\ \vdots \\ \nabla^{l-1} S_u(X_u)\der_{d,\dots, d} X_u}\,dW_u,
\end{align*}
where the processes $\psi_1,\dots, \psi_{d,\dots, d}$ consists of additional integrals $\int_s^t\,du$ and $\int_s^t\,dW_u$ of the remaining terms induced by repeated application of the chain rule. The terms within $\int_s^t \,du$ and $\int_s^t \,dW_u$ respectively are seen to be functions of $u$ and the state $Y$, satisfying the conditions of \cite{Kunita2004} Theorem 3.4. By applying it again to the SDE governing $Y$ the result follows via induction on $l$.
\end{proof}

\begin{prop}
	\label{prop:EgXPolyGrowth}
	Let $l\in \N, p\geq 1$ and $b_t,S_t \in G_1\cap \Lip^l$, uniformly in $t$. Let $X$ be the unique solution to the family of \tSDE s 
	\[dX_t^s(x) = b_t(X_t^s(x))\,dt + S_t(X_t^s(x))\,dW_t, \quad X_s^s(x) = x.\]
	and $g : \R^d \to \R\in G^l(\R^d)$. Define
	\[v^s_t(x) := \E g(X^s_t(x)), \quad x\in \R^d.\]
	Then $v^s_t\in G^l(\R^d)$, uniformly in $s$ and $t$. Further, $\sup_{s\leq t}\nrm{v_t^s}{G^l}$  depends only on, and is an increasing function of the $G_1$- and $\Lip^l$-norms of $b$ and $S$, as well as the $G^l$-norm of $g$.
\end{prop}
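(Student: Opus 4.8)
The plan is to differentiate $v^s_t(x)=\E g(X^s_t(x))$ under the expectation, using the pathwise $C^l$-dependence of $x\mapsto X^s_t(x)$ supplied by Theorem \ref{thm:SDEDer}, and then to read off the polynomial growth of every $\der^\al v^s_t$, $|\al|\le l$, from the multivariate chain rule together with Hölder's inequality.

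First I would assemble the moment estimates. For $|\al|=0$, Theorem \ref{thm:SDESol} (with $\ph\equiv x$) gives $\nrmps{X^s_t(x)}{p}\lesssim 1+|x|$ for all $p\ge 2$, uniformly in $s\le t\le T$, with the implied constant increasing in the $G_1$- and $\Lip$-norms of $b,S$. For $1\le|\al|\le l$, Theorem \ref{thm:SDEDer} tells us that $X^s_t(x)$ is a.s.\ $C^l$ in $x$ and that $\der^\al X^s$ solves a linear \tSDE{} whose coefficients $\nabla b_u(X_u),\nabla S_u(X_u)$ are bounded adapted processes (bounds $\le\nrm{b}{\Lip},\nrm{S}{\Lip}$) and whose inhomogeneity $\psi_\al$ satisfies $\nrmps{\psi_\al}{p}\in G(\R^d)$ with controlled $G$-norm; a Gr\"onwall estimate together with the Burkholder--Davis--Gundy inequality, exactly as in the proof of Theorem \ref{thm:SDESol}, then yields $\nrmps{\der^\al X^s}{p}\in G(\R^d)$ for all $p\ge 2$, uniformly in $s\le t\le T$, with $G$-norm an increasing function of the $G_1$- and $\Lip^l$-norms of $b,S$.

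Next I would invoke the Fa\`a di Bruno formula: for $|\al|\le l$,
\[\der^\al\bigl(g\circ X^s_t\bigr)(x)=\sum_{\pi}c_\pi\,(\der^{\beta(\pi)}g)(X^s_t(x))\prod_{B\in\pi}\der^{\al_B}X^s_t(x),\]
a finite sum over set partitions $\pi$ of the index multiset of $\al$, with universal constants $c_\pi$, $|\beta(\pi)|=|\pi|\le|\al|\le l$, and $\sum_{B\in\pi}\al_B=\al$. Since $g\in G^l$ we have $|\der^{\beta(\pi)}g(y)|\lesssim 1+|y|^{\ka}$ for some $\ka$ controlled by $\nrm{g}{G^l}$, so $\nrmps{\der^{\beta(\pi)}g(X^s_t(x))}{q}\lesssim 1+\nrmps{X^s_t(x)}{q\ka}^{\ka}\lesssim(1+|x|)^{\ka}$. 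Picking exponents with $1/q+\sum_{B\in\pi}1/q_B=1$ and applying Hölder's inequality, each summand has $L^1$-norm bounded by a polynomial in $|x|$ whose coefficients depend only on, and increasingly on, the $G_1$- and $\Lip^l$-norms of $b,S$ and the $G^l$-norm of $g$, uniformly in $s\le t\le T$. Hence $x\mapsto\nrmp{\der^\al(g\circ X^s_t)(x)}{1}$ is of polynomial growth, uniformly in $s,t$.

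The step that needs genuine care, and the only real obstacle, is the interchange $\der^\al\E g(X^s_t(x))=\E\,\der^\al\bigl(g\circ X^s_t\bigr)(x)$ for $|\al|\le l$. I would prove this by induction on $|\al|$ via dominated convergence: the difference quotient of $\der^{\al-e_i}(g\circ X^s_t)$ in direction $e_i$ converges a.s.\ by the a.s.\ $C^l$-regularity of Theorem \ref{thm:SDEDer}, and is dominated, \emph{locally uniformly in $x$}, by an $L^1$ majorant built from $\der^\gamma X$ ($|\gamma|\le|\al|$) and the derivatives of $g$ up to order $|\al|$; the required local-uniform $L^p$ control of the derivative processes is precisely the Kolmogorov-continuity estimate underlying Theorem \ref{thm:SDEDer} (cf.\ \cite{Kunita2004}), now combined with the polynomial growth of the derivatives of $g$. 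Continuity of $\der^\al v^s_t$ in $x$ follows from the same dominated convergence. Putting this together with the previous paragraph, $|\der^\al v^s_t(x)|\le\nrmp{\der^\al(g\circ X^s_t)(x)}{1}$ is of polynomial growth uniformly in $s\le t\le T$, which gives $v^s_t\in G^l$ and the claimed bound on $\sup_{s\le t}\nrm{v^s_t}{G^l}$. Everything outside the differentiation-under-the-expectation step is routine Hölder/Gr\"onwall bookkeeping.
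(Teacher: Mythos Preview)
Your proposal is correct and follows essentially the same route as the paper's proof: both rely on the interchange $\der^\al \E g(X)=\E\,\der^\al(g\circ X)$ justified via Theorem \ref{thm:SDEDer}, then the multivariate chain rule (Fa\`a di Bruno) combined with H\"older's inequality and the moment bounds from Theorems \ref{thm:SDESol} and \ref{thm:SDEDer}. The only difference is emphasis: you spell out the dominated-convergence justification of the interchange in detail, whereas the paper dispatches it in one line (``by induction \dots\ using Theorem \ref{thm:SDEDer}'').
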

\begin{proof}
	Let $\al$ be a multi-index with $|\al|\leq l$. By induction one can show $\E \der^\al g(X) = \der^\al \E g(X)$ using Theorem \ref{thm:SDEDer}. By the higher chain rule,
	\begin{align*}
		|\der^\al v^s_t| = &\E|\der^\al g(X_t^s)| \leq \sum_{j=1}^{|\al|} \nrmps{\nabla^j g(X)}{2} \sum_{\cB\in \cS_j^\al} N(\al, \cB)\prod_{\be \in \cB} \nrmps{\der^\be X}{2\#\cB}.
	\end{align*}
Here,
\[\nrmps{\nabla^j g(X)}{2} = \nrmps{\sqrt{\sum_{|\be|\leq j} |\der^\be g(X)|^2}}{2}.\]
Further, $\cS_j^\al$ is the set of all partitions of $\al$ into $j$ multi-set multi-indices (each partition being a multi-set as well), $N(\al, \cB)\in \N$, $\#\cB$ is the size of the partition and the product $\prod_{\be \in \cB}$ respects the multiplicities of $\be \in \cB$.
	From $g\in G^l(\R^d)$ and Theorem \ref{thm:SDEDer} we conclude $\der^\al v\in G(\R^d)$.
\end{proof}

\subsection{Moment estimates and growth conditions}
We collect various moment estimates for SGD-like algorithms and their approximating SDEs in this section.

\subsubsection{Stochastic Gradient Descent}
Recall the definition of $\GSGD$ in \eqref{eq:GSGD}, as well as Assumption \assref{assum:H}. Denote the stochastic one-step methods iterations starting at time $n$ with initial value $x\in \R^d$ and parameter $h\in (0,1)$ by $\GSGD_n^{h,n}(x)$. 
Given a discrete process $Y$, e.g.\ $Y = \GSGD^{h,k}(x)$, we write

\newcommand{\diff}{\Delt}
\begin{equation}
\label{eq:diff}
\Delt Y_n := Y_{n+1} - Y_n.
\end{equation}
We let $\diff Y_n^h := \diff Y_n^{h,0}$. Observe that $\diff Y_n^{h,n}(x) = Y_{n+1}^{h,n}(x) - x$.
\begin{lem}
\label{lem:momentsSGD}
We have
\begin{align*}
\E \diff \GSGD_n^{h,n} = & h \bar f_{nh}, \\
\E (\diff \GSGD_n^{h,n})^{\otimes 2} = & h^2 (V_{nh} + \bar f_{nh}^{\otimes 2}).
\end{align*}
\end{lem}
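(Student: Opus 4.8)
The plan is to read off both moments directly from the definition \eqref{eq:GSGD} of the one-step method. First I would unwind the shifted-time notation: by construction $\GSGD_n^{h,n}(x) = x$, so a single application of the recursion \eqref{eq:GSGD} gives $\GSGD_{n+1}^{h,n}(x) = x + h f_{nh}^{h}(x)$, and hence
\[\diff \GSGD_n^{h,n}(x) = \GSGD_{n+1}^{h,n}(x) - x = h f_{nh}^{h}(x).\]
Assumption \assref{assum:H} bounds $|f_{nh}^{h}(x)|$ by $Z(1+|x|)$ with $Z$ having finite moments of all orders, so every expectation below is finite and $\bar f_{nh}$, $V_{nh}$ are well-defined at $x$.

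Taking expectations gives the first identity immediately:
\[\E\, \diff \GSGD_n^{h,n}(x) = h\, \E f_{nh}^{h}(x) = h\, \bar f_{nh}(x),\]
by the definition of $\bar f$. For the second identity I would square the increment and invoke the elementary decomposition $\E[\xi^{\otimes 2}] = \E[(\xi - \E\xi)^{\otimes 2}] + (\E\xi)^{\otimes 2}$, valid for any square-integrable $\R^d$-valued random vector $\xi$. Applied to $\xi = f_{nh}^{h}(x)$, whose mean is $\bar f_{nh}(x)$ and whose covariance matrix is $V_{nh}(x)$ by the definitions of $\bar f$ and $V$, this yields $\E[(f_{nh}^{h}(x))^{\otimes 2}] = V_{nh}(x) + \bar f_{nh}(x)^{\otimes 2}$, and therefore
\[\E\, (\diff \GSGD_n^{h,n}(x))^{\otimes 2} = h^2\, \E[(f_{nh}^{h}(x))^{\otimes 2}] = h^2\big(V_{nh}(x) + \bar f_{nh}(x)^{\otimes 2}\big).\]
Since $x\in \R^d$ was arbitrary, both identities hold as equalities of functions (equivalently, of random fields evaluated in the initial condition), which is the assertion.

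There is no genuine obstacle here; the lemma is a bookkeeping step. The only two points that merit a sentence of care are (i) reading $\GSGD_n^{h,n}$ correctly as the chain restarted at time $n$ from $x$, so that the increment is exactly $h f_{nh}^{h}(x)$ and not $h f_{nh}^{h}$ evaluated at some nontrivial iterate, and (ii) citing Assumption \assref{assum:H} to guarantee that all moments in play are finite.
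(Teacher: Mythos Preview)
Your proof is correct and is exactly the straightforward computation the paper has in mind; the paper's own proof consists of the single word ``Straightforward.'' Your write-up merely makes explicit what that word hides: one application of the recursion gives $\diff\GSGD_n^{h,n}=hf_{nh}^h$, and then the two identities follow from the definitions of $\bar f$ and $V$ together with the elementary relation $\E[\xi^{\otimes 2}]=\Cov(\xi)+(\E\xi)^{\otimes 2}$.
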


\begin{proof}
Straightforward. 
\end{proof}

\begin{lem}
\label{lem:linGrowthEstSGD}
Let $p\geq 1$. The following estimates hold true: 
\begin{enumerate}[(i)]
\item For every $T > 0$  there exists a constant $C > 0$, such that
\[\sup_{h\in(0,1)}\nrmps{\GSGD^h(x)}{p,\floor{\frac T h}} \leq C(1 + |x|),\]
for $x\in \R^d$, and $C$ depends only on, and is an increasing function of $\nrm{Z}{p}$.
\item We have \[\nrm{\diff \GSGD_n^{h,i,n}(x)}{p} \leq h \nrm{Z}{p}(1 + |x|),\]
for all $h\in (0,1), i\in I, n\in \N$ and $x\in \R^d$. 
\end{enumerate}
\end{lem}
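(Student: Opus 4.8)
The plan is to treat (ii) and (i) separately: (ii) is immediate from the definition and Assumption \assref{assum:H}, while (i) is a discrete Grönwall estimate whose only delicate point is keeping the constant controlled by $\nrm{Z}{p}$ rather than by an exponential moment of $Z$.

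For (ii): since $\GSGD^{h,i,n}$ denotes the one-step method started at step $n$ with value $x$, the recursion \eqref{eq:GSGD} gives $\GSGD_n^{h,i,n}(x)=x$ and $\GSGD_{n+1}^{h,i,n}(x)=x+hf_{nh}^{i,h}(x)$, so $\diff\GSGD_n^{h,i,n}(x)=hf_{nh}^{i,h}(x)$. By \assref{assum:H} (holding uniformly in $i$), $|hf_{nh}^{i,h}(x)|\le hZ(1+|x|)$ a.s., and taking $L^p$-norms yields $\nrm{\diff\GSGD_n^{h,i,n}(x)}{p}\le h\nrm{Z}{p}(1+|x|)$.

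For (i): a direct iteration $1+|\GSGD_{k+1}^h|\le(1+hZ)(1+|\GSGD_k^h|)$ would produce a factor $\E[(1+hZ)^{p\floor{T/h}}]$, which is not controlled by $\nrm{Z}{p}$ (only by an exponential moment of $Z$). To avoid this I would replace the global dominating variable by per-step ones: set $Z_k:=\sup_{y\in\R^d}|f_{kh}^h(y)|/(1+|y|)$ (a random variable, e.g.\ by continuity of $f_{kh}^h$ the supremum may be taken over a countable dense set). Then $0\le Z_k\le Z$ a.s.\ by \assref{assum:H}, so $\nrm{Z_k}{p}\le\nrm{Z}{p}$, and since $(f_t)_{t\in[0,T]}$ is an independent family, $(Z_k)_{k\in\N_0}$ is independent. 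From \eqref{eq:GSGD} and $|f_{kh}^h(\GSGD_k^h)|\le Z_k(1+|\GSGD_k^h|)$ we get, pathwise, $1+|\GSGD_{k+1}^h|\le(1+hZ_k)(1+|\GSGD_k^h|)$, hence $1+|\GSGD_n^h|\le(1+|x|)\prod_{k=0}^{n-1}(1+hZ_k)$; as $Z_k\ge0$ the product is nondecreasing in $n$, so with $N:=\floor{T/h}$ we obtain $\max_{n\le N}(1+|\GSGD_n^h|)\le(1+|x|)\prod_{k=0}^{N-1}(1+hZ_k)$ pathwise. Raising to the $p$-th power, taking expectations, and using independence of the $Z_k$ gives $\E[\max_{n\le N}(1+|\GSGD_n^h|)^p]\le(1+|x|)^p\prod_{k=0}^{N-1}\E[(1+hZ_k)^p]$. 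For $h\in(0,1)$, $\E[(1+hZ_k)^p]=\sum_{j=0}^p\binom pj h^j\E[Z_k^j]\le 1+h\sum_{j=1}^p\binom pj\nrm{Z}{p}^j=1+hC_0$ with $C_0:=(1+\nrm{Z}{p})^p-1$, using $h^j\le h$ and $\E[Z_k^j]\le\E[Z^j]\le\nrm{Z}{p}^j$ for $j\le p$. Hence $\prod_{k=0}^{N-1}\E[(1+hZ_k)^p]\le(1+hC_0)^N\le(1+hC_0)^{T/h}\le e^{TC_0}$, and taking $p$-th roots gives $\nrmps{\GSGD^h(x)}{p,\floor{T/h}}\le e^{TC_0/p}(1+|x|)$; the constant $C:=e^{TC_0/p}$ depends only on and is increasing in $\nrm{Z}{p}$ (and $T,p$), uniformly in $h\in(0,1)$.

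The main obstacle is precisely the moment bookkeeping just described: naively the growth constant would involve $\E[e^{pTZ}]$, which \assref{assum:H} does not control; localizing the domination to each time step and exploiting the independence of $(f_t)_t$ replaces this by the harmless discrete product $\prod_k(1+hC_0)\le e^{TC_0}$. The remaining ingredients — the pathwise multiplicative recursion, the elementary expansion of $\E[(1+hZ_k)^p]$ for $h<1$, and $(1+hC_0)^{\floor{T/h}}\le e^{TC_0}$ — are routine.
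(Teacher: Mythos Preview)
Your proof of (ii) matches the paper's. For (i) your argument is correct, with two small caveats: the binomial expansion of $(1+hZ_k)^p$ tacitly assumes $p\in\N$, which is handled (as the paper also does at the end) by passing to $\ceil{p}$ via $\nrmps{\cdot}{p}\le\nrmps{\cdot}{\ceil p}$; and the measurability of $Z_k=\sup_{y}|f_{kh}^h(y)|/(1+|y|)$ needs some regularity of $x\mapsto f_{kh}^h(x)$ (you invoke continuity), which is natural in the applications but not part of \assref{assum:H} as stated.

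Your route for (i) is genuinely different from the paper's. The paper runs an \emph{additive} discrete Gr\"onwall on $\nrmps{\GSGD^h}{p,n}^p$: it expands $|\GSGD_{n+1}^h|^p$ by the binomial theorem, bounds each cross term $|\GSGD_{n}^h|^{p-k}h^k|f_{nh}^h(\GSGD_n^h)|^k$ via $|f|\le Z(1+|\cdot|)$, and obtains a recursion $\nrmps{\GSGD^h}{p,n+1}^p\le(1+Ch)\nrmps{\GSGD^h}{p,n}^p+Ch$ with $C=\sum_{k=1}^p\binom pk\E[|Z|^k]$, which iterates to $(1+Ch)^{\floor{T/h}}\le e^{CT}$. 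You instead localise the bound to a \emph{multiplicative pathwise} recursion $1+|\GSGD_n^h|\le(1+|x|)\prod_{k<n}(1+hZ_k)$ and exploit the independence of $(f_t)_t$ to factor $\E\big[\prod_k(1+hZ_k)^p\big]=\prod_k\E[(1+hZ_k)^p]$. Both arrive at essentially the same constant $C_0=(1+\nrm{Z}{p})^p-1$. Your approach makes the role of the independence of $(f_t)_t$ explicit and yields the $\max_{n\le\floor{T/h}}$ bound for free by pathwise monotonicity of the product; the paper's approach avoids having to construct (and check measurability of) the per-step $Z_k$, at the cost of more involved cross-term bookkeeping.
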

\begin{proof}
\begin{enumerate}[(i)]
\item  Let $p\in \N$.
For every $h \in (0,1)$ and $n\in \set{0,\dots, \floor{T/h}}$,
\[\nrmps{(\GSGD^h)}{p,n} =  \sup_{i\in I} \left(\E \max_{n'\in \set{-1,\dots,n-1}} |\GSGD_{n'+1}^{h,i}|^p\right)^{1/p}.\]
We have
\begin{align*}
	|\GSGD^h_{n+1}|^p \leq & |\GSGD_n^h + h f_{ nh}^h(\GSGD_n^h)|^p\\
	\leq &|\GSGD^h_n|^p + \sum_{k=1}^p \binom{p}{k} |\GSGD^h_n|^{p-k} h^k |f_{nh}^h(\GSGD_n^h)|^k,
\end{align*}
for all $n\in \set{0,\dots, \floor{T/h}}$.
Now, for $k\in \set{1,\dots p}$, $h\in (0,1)$ and $n\in \set{0,\dots, \floor{T/h}}$,
\begin{align*}
	\nrmps{(|\GSGD^h|^{p-k}|f_{\blnk h}^h(\GSGD^h)|^k)}{1,n} &\leq \nrmps{(|\GSGD^h|^{p-k}Z^k(1 + |\GSGD^h|)^k)}{1,n}\\
	&\leq \E[Z^k] \nrmps{(|\GSGD^h|^{p-k} + |\GSGD^h|^{k + p - k})}{1,n} \\
	&\leq 2\E[Z^k](1 + \nrmpsp{(\GSGD^h)}{p,n}{p})\\
\end{align*}
using the inequalities $y^p + y^q \leq 2(1 + y^q)$ for $0<p\leq q$ and $y \geq 0$, as well as Assumption \assref{assum:H}.
Therefore, if we let $\chi_{-1} = 0$,
\begin{align*}
	\nrmpsp{(\GSGD^h)}{p,n+1}{p}  \leq &\E\max_{n'\in \set{-1,\dots, n}} |\GSGD_{n'}^{h}|^p \\
	&+ \E\max_{n'\in \set{-1,\dots, n}}\sum_{k=1}^p\binom{p}{k} h^k|\GSGD_{n'}^{h}|^{p-k}|f_{n'h}^{h}(\GSGD_{n'}^{h,i})|^k\\
	\leq&\nrmpsp{(\GSGD^h)}{p,n}{p} + \sum_{k=1}^p\binom{p}{k} h^k\nrmps{|\GSGD^h|^{p-k}|f_{\blnk h}^{h}(\GSGD^h)|^k)}{1,n}\\
	\leq&\nrmpsp{(\GSGD^h)}{p,n}{p}+ Ch(1  + \nrmpsp{(\GSGD^h)}{p,n}{p})\\
	=&(1 + Ch)\nrmpsp{(\GSGD^h)}{p,n}{p} + Ch,
\end{align*}
where $C := \sum_{k=1}^p \binom{p}{k}\E[|Z|^k]$.
By induction over $n$,
\[\nrmpsp{(\GSGD^h)}{p,n}{p} \leq (1 + Ch)^n\nrmpsp{(\GSGD^h)}{p,0}{p}  + Ch\left(\sum_{k=0}^{n-1}(1 + Ch)^k\right),\]
for all $h\in (0,1)$ and $n\in \set{0,\dots, \floor{T/h}}$. Consequently,
\begin{align*}
	\nrmpsp{\GSGD^h(x)}{p,\floor{\frac T h}}{p}&\leq (1 + Ch)^{\floor{\frac T h}} |x|^p + Ch \sum_{k=0}^{\floor{\frac T h}} (1 + Ch)^k\\
	&\leq (1 + Ch)^{\frac T h} |x|^p + Ch\frac T h(1 + Ch)^{\frac T h}\\
	&=(CT + |x|^p)e^{\log(1+Ch)\frac T h} \\
	&\leq (CT + |x|^p)e^{CT},
\end{align*}
for all $h\in (0,T)$ and $x\in \R^d$, since $\log(1 + y) \leq y$ for all $y > -1$. Now, the inclusion follows for $p\in \N$.
For arbitrary $p\geq 1$ we have $\nrmps{Y}{p} \leq \nrmps{Y}{\ceil{p}}$ and thus the result is proven.
\item We have
\[\nrmp{\diff \GSGD_n^{h,n}(x)}{p} = \nrmp{h f_{nh}^{h}(x)}{p} \leq h \nrm{Z}{p} (1 + |x|),\]
for all $h\in (0,1), i\in I$ and $x\in \R^d$.
\end{enumerate}
\end{proof}
\subsubsection{Diffusion Approximations}
We shall now consider moments and growth conditions for solutions of (families of) \tSDE s that will act as approximations to SGD. 

\newcommand{\Img}{\operatorname{Im}}
Given the family of solutions $X$ to a \tSDE{}, we define the family of discrete processes
\newcommand{\discX}{\tilde X}
\begin{equation}
\label{eq:discX}
\tilde X_n^{h}(x) := X_{nh}^{h}(x),
\end{equation}
with $h\in (0,1), x\in \R^d$ and $n\in \set{0,\dots, \floor{T/h}}$. Then,
\[\diff \discX_n^{h,n}(x) = X_{nh}^h(x) - x.\]
\begin{lem}
\label{lem:linGrowthEstSDE}
Let
\[b : (0,1)\times [0,T] \times \R^d \to \R^d, S : [0,T] \times \R^d \to \R^{d\times d} \in G_1(\R^d) \cap \Lip,\]
uniformly in $t$ and $h$, and $X$ be the unique solution to the family of \tSDE s
\[dX_t^h= b_t^h(X_t^h)\,dt + \sqrt{h}S_t(X_t^h)\,dW_t.\]
Then for all $p\geq 2$ there exists a $C\in G(\R^d)$, such that
\[\nrm{\diff \discX_n^{h,n}}{p} \leq h C,\]
for all $h\in (0,1)$ and $n\in \set{0,\dots, \floor{T/h}}$. Further, $\nrm{C}{G}$  depends only, and is an increasing function of the $G_1$- and $\Lip$-norms of $b$ and $S$. 
\end{lem}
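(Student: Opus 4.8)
The plan is to split the one-step increment over the short window $[nh,(n+1)h]$ into a drift integral and a stochastic integral and to bound each by $h$ times a function of $|x|$ of linear growth. Writing $X^{h,nh}$ for the solution restarted at time $nh$ (so that $\diff\discX_n^{h,n}(x) = X^{h,nh}_{(n+1)h}(x) - x$), we have
\[
\diff\discX_n^{h,n}(x) \;=\; \int_{nh}^{(n+1)h} b^h_s\bigl(X^{h,nh}_s(x)\bigr)\,ds \;+\; \int_{nh}^{(n+1)h} \sqrt h\, S_s\bigl(X^{h,nh}_s(x)\bigr)\,dW_s .
\]
For the stochastic part, the shortness of the interval will be used \emph{twice}: once through the explicit prefactor $\sqrt h$ and once through the integration length after the Burkholder--Davis--Gundy inequality, which is what makes this contribution of order $h$ rather than $\sqrt h$.

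The first step is an a priori $L^p$-bound on $X^{h,nh}$ that is uniform in $n$ and $h$. Since the restart time $nh$ is generally not $0$, I would pass to the shifted process $Y_s := X^{h,nh}_{s+nh}(x)$, $s\in[0,T-nh]$, which solves the SDE with coefficients $\tilde b_s := b^h_{s+nh}$ and $\tilde S_s := \sqrt h\, S_{s+nh}$ driven by the shifted Brownian motion and filtration (again satisfying the usual conditions), started at the deterministic value $x$. As $h<1$, the coefficients $\tilde b,\tilde S$ lie in $G_1\cap\Lip$ uniformly in $s,n,h$, with norms bounded by the $G_1$- and $\Lip$-norms of $b$ and $S$; hence Theorem \ref{thm:SDESol} yields a constant $C_0$, depending only on and increasing in those norms, with $\sup_{s\in[nh,(n+1)h]}\nrm{X^{h,nh}_s(x)}{p} \le C_0\,(1+|x|)$ for all $n,h,x$.

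It remains to estimate the two integrals. For the drift, Minkowski's integral inequality together with $|b^h_s(y)|\le\nrm{b}{G_1}(1+|y|)$ gives
\[
\nrm{\int_{nh}^{(n+1)h} b^h_s\bigl(X^{h,nh}_s(x)\bigr)\,ds}{p} \;\le\; \int_{nh}^{(n+1)h}\nrm{b}{G_1}\bigl(1+\nrm{X^{h,nh}_s(x)}{p}\bigr)\,ds \;\le\; h\,\nrm{b}{G_1}\bigl(1+C_0(1+|x|)\bigr),
\]
while for the stochastic integral, the Burkholder--Davis--Gundy inequality (with constant $c_p$ depending only on $p$), then Minkowski's integral inequality applied in $L^{p/2}$ (valid since $p/2\ge1$), and $|S_s(y)|\le\nrm{S}{G_1}(1+|y|)$ give
\[
\nrm{\int_{nh}^{(n+1)h} \sqrt h\, S_s\bigl(X^{h,nh}_s(x)\bigr)\,dW_s}{p} \;\le\; c_p\,\sqrt h\left(\int_{nh}^{(n+1)h}\nrm{S_s\bigl(X^{h,nh}_s(x)\bigr)}{p}^2\,ds\right)^{1/2} \;\le\; c_p\,h\,\nrm{S}{G_1}\bigl(1+C_0(1+|x|)\bigr).
\]
Adding these two estimates, $\nrm{\diff\discX_n^{h,n}(x)}{p}\le h\,C(x)$ with $C\in G_1(\R^d)\subseteq G(\R^d)$ given by $C(x)=(\nrm{b}{G_1}+c_p\nrm{S}{G_1})(1+C_0(1+|x|))$; in particular $\nrm{C}{G}$ depends only on, and is an increasing function of, the $G_1$- and $\Lip$-norms of $b$ and $S$ (and of $p$), and the estimate is uniform in $n$ and $h$ because $C_0$, $c_p$, $\nrm{b}{G_1}$ and $\nrm{S}{G_1}$ are.

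I do not expect a genuine obstacle here: this is a standard one-step SDE increment estimate, parallel to Lemma \ref{lem:linGrowthEstSGD}(ii) for $\GSGD$. The only points requiring a little care are the time shift that makes Theorem \ref{thm:SDESol} applicable with $n$-independent constants, and the bookkeeping of the $\sqrt h$ factors, so that the diffusion term scales like $h$ and not merely $\sqrt h$.
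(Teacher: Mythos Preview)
Your proof is correct and follows essentially the same route as the paper: split the one-step increment into drift and stochastic parts, invoke the a~priori $L^p$-bound from Theorem~\ref{thm:SDESol}, and extract a factor $h$ from each (interval length for the drift, interval length plus the explicit $\sqrt h$ prefactor for the diffusion). The only cosmetic differences are that the paper uses H\"older/Jensen and an $L^p$-It\^o inequality where you use Minkowski and BDG, and it bounds directly via $\nrmps{\cdot}{p}$ over $[0,T]$ rather than making your time-shift explicit.
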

\begin{proof}
We have
\[\nrm{\diff \discX_n^{h,n}}{p} \leq \nrm{\int_{nh}^{(n+1)h} b^h_s(X_s^h)ds}{p} + \sqrt h \nrm{\int_{nh}^{(n+1)h} S_s(X_s^h)\,dW_s}{p}.\]
On the one hand
\begin{align*}
\nrm{\int_{nh}^{(n+1)h} b_t^h(X_t^h)dt}{p}\leq & h^{1-\frac 1 p} \left(\int_{nh}^{(n+1)h} \E|b^h_t(X_t^h)|^p\,dt\right)^{1/p} \\
\leq & h\left(\E\sup_{t\in [0,T]} |b_t^h(X_t^h)|^p\right)^{1/p}\\
\leq & h \nrmps{b^h(X^h)}{p}.
\end{align*}
By Theorem \ref{thm:SDESol}, and in particular by the last sentence, we have
\[x\mapsto \nrmps{b^h(X^h(x))}{p}\in G(\R^d),\]
uniformly in $h$. An analogous statement is true for $S$.
On the other hand,
\begin{align*}
\sqrt h\nrm{\int_{nh}^{(n+1)h} S_t(X_t^h)\,dW_t}{p} \leq & \sqrt{\frac{p(p-1)}{2}} h^{1 - \frac 1 p} \nrm{S(X^h)}{p}\\
\leq & c_1  h \nrmps{S(X^h)}{p},
\end{align*}
for some $c_1 > 0$, where we have used \tIto's isometry and Jensen's inequality.
\end{proof}

\begin{prop}
\label{prop:weakOneStepDiff}
Let $l\in \N$, $k\in \set{0,\dots, \floor{T/h}}$,
\[b : (0,1) \times [0,T] \times \R^d \to \R^d, S :[0,T] \times \R^d \to \R^{d\times d} \in G_1(\R^d) \cap \Lip^{l+1},\]
uniformly in $h,t$, and let $X$ be the unique solution to the family of \tSDE s
\[dX_t^h = b_t^h(X_t^h)\,dt + \sqrt{h}S_t(X_t^h)\,dW_t.\]
Suppose further we are given $\ka \in \N$,
\[g : (0,1) \times \N \times \R^d\to \R, (h,k, x) \mapsto g^h_k(x) \in G^{l+1}_\ka (\R^d),\]
uniformly in $k$ and $h$,
and assume there exists a function $C \in G(\R^d)$ such that
\begin{align*}
|\E (\diff \GSGD_k^{h,k})^\al - \E (\diff \discX_k^{h,k})^\al| \leq & h^{l+1} C, |\al| \leq l\\
\nrm{\diff \GSGD_k^{h,k}}{(2l+2)\vee \ka}^{l+1}, \nrm{\diff \discX_k^{h,k}}{(2l+2)\vee \ka}^{l+1} \leq &h^{l+1} C,
\end{align*}
for all $h\in (0,1)$ and $k\in \set{0,\dots, \floor{T/h}}$.
Then there exists a function $C'\in G(\R^d)$, such that
\[|\E g_k^h(\GSGD_{k+1}^{h,k}) - \E g_k^h(\discX_{k+1}^{h,k})| \leq h^{l+1} C',\]
for all $h\in (0,1)$ and $k\in \set{0,\dots, \floor{T/h}}$. Further, $\nrm{C'}{G}$ depends only on, and is an increasing function of $\nrm{C}{G}$ and $\nrm{g}{G^{l+1}}$.
\end{prop}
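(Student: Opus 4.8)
The proof follows a standard weak one-step error analysis: Taylor-expand $g^h_k$ around the common starting point $x$ up to order $l$, absorb the low-order terms into the matching-moment hypothesis, and estimate the order-$(l+1)$ Taylor remainder using the $(l+1)$-st moment bounds together with the polynomial growth of the top-order derivatives of $g$.

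First I would fix $h\in(0,1)$, $k\in\{0,\dots,\floor{T/h}\}$ and $x\in\R^d$ and note that, since the processes are restarted at index $k$ with value $x$, we have $\GSGD_k^{h,k}(x)=x=\discX_k^{h,k}(x)$, so $\GSGD_{k+1}^{h,k}(x)-x=\diff\GSGD_k^{h,k}(x)$ and $\discX_{k+1}^{h,k}(x)-x=\diff\discX_k^{h,k}(x)$. Taylor's theorem applied to $g^h_k$ at $x$ gives, for every $y\in\R^d$,
\[
g^h_k(y)=\sum_{|\al|\le l}\frac{\der^\al g^h_k(x)}{\al!}(y-x)^\al+\sum_{|\al|=l+1}\frac{l+1}{\al!}(y-x)^\al\int_0^1(1-t)^l\der^\al g^h_k\big(x+t(y-x)\big)\,dt.
\]
Evaluating at $y=\GSGD_{k+1}^{h,k}$ and at $y=\discX_{k+1}^{h,k}$, taking expectations and subtracting, the $|\al|=0$ terms cancel, and what remains is a finite sum over $1\le|\al|\le l$ of $\frac1{\al!}\der^\al g^h_k(x)\big(\E(\diff\GSGD_k^{h,k})^\al-\E(\diff\discX_k^{h,k})^\al\big)$ together with the difference of the two Taylor remainder expectations.

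For the polynomial part I would use $|\der^\al g^h_k(x)|\le\nrm{g}{G^{l+1}}(1+|x|^\kappa)$, valid since $g^h_k\in G^{l+1}_\kappa$ uniformly in $h$ and $k$, and the hypothesis $|\E(\diff\GSGD_k^{h,k})^\al-\E(\diff\discX_k^{h,k})^\al|\le h^{l+1}C$ for $|\al|\le l$; summing over the finitely many such $\al$ bounds this part by $h^{l+1}C_1$ with $C_1\in G(\R^d)$. For the remainder I would estimate $|\der^\al g^h_k(x+t(y-x))|\lesssim\nrm{g}{G^{l+1}}(1+|x|^\kappa+|y-x|^\kappa)$ uniformly in $t\in[0,1]$, so that each remainder term is $\lesssim(1+|x|^\kappa)\big(|y-x|^{l+1}+|y-x|^{l+1+\kappa}\big)$; setting $y=\GSGD_{k+1}^{h,k}$ resp. $y=\discX_{k+1}^{h,k}$, taking expectation, applying Hölder's inequality and using the moment hypotheses $\nrm{\diff\GSGD_k^{h,k}}{(2l+2)\vee\kappa}^{l+1},\nrm{\diff\discX_k^{h,k}}{(2l+2)\vee\kappa}^{l+1}\le h^{l+1}C$ (with $h<1$ used to absorb the extra powers of $h$ coming from the $|y-x|^{l+1+\kappa}$ term), each remainder expectation is bounded by $h^{l+1}C_2$ with $C_2\in G(\R^d)$. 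Adding the two contributions gives the claimed inequality with a constant $C'\in G(\R^d)$, and carrying the constants through Taylor's theorem, Hölder, and the (finitely many) multi-indices shows that $\nrm{C'}{G}$ depends only on, and increasingly on, $\nrm{C}{G}$ and $\nrm{g}{G^{l+1}}$.

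The only structural input is the identity $\GSGD_k^{h,k}=x=\discX_k^{h,k}$ at the restart index, which makes the degree-zero Taylor term vanish and identifies $y-x$ with the one-step increments; everything else is Taylor's theorem and Hölder. I expect the main obstacle to be the bookkeeping in the remainder estimate: after extracting the polynomial-growth factors $(1+|x|^\kappa)$ and applying Hölder, one must check that the surviving moments of $\diff\GSGD_k^{h,k}$ and $\diff\discX_k^{h,k}$ are exactly the ones controlled by the prescribed norm $\nrm{\cdot}{(2l+2)\vee\kappa}$ — the exponent $(2l+2)\vee\kappa$ being chosen so that $\nrm{\diff\GSGD_k^{h,k}}{l+1}$ and the cross terms from the $|y-x|^{l+1+\kappa}$ contribution are bounded simultaneously — and that the final constant is a polynomially bounded function of $x$ with precisely the stated monotone dependence on $\nrm{C}{G}$ and $\nrm{g}{G^{l+1}}$.
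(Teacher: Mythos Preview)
Your approach is essentially the same as the paper's: Taylor-expand $g^h_k$ at the common restart point $x$, use the moment-matching hypothesis on the terms of order $1\le|\al|\le l$, and bound the order-$(l+1)$ remainder via the polynomial growth of $\der^\al g$ together with H\"older and the $(l{+}1)$-st moment bound on the increments. The only cosmetic difference is that you use the integral form of the remainder while the paper uses the Lagrange form with intermediate points $\theta_D\in(0,1)$; the estimates and the resulting dependence of $\nrm{C'}{G}$ on $\nrm{C}{G}$ and $\nrm{g}{G^{l+1}}$ are identical.
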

\begin{proof}
By Taylor's theorem there exist $\thet_{\diff \GSGD_k^{h,k}},\thet_{\diff \discX_k^{h,k}}\in (0,1)$ for every $h\in (0,1)$ and $k$, such that
\begin{align*}
g_k(\GSGD_{k+1}^{h,k}) - g_k(\discX_{k+1}^{h,k}) = & g_k(\GSGD_{k+1}^{h,k}) - g_k - (g_k(\discX_{k+1}^{h,k}) - g_k)\\
=&\sum_{0<|\al|\leq l} \frac 1 {\al!}\der^\al g_k\cdot ((\diff \GSGD_k^{h,k})^\al - (\diff \discX_k^{h,k})^\al) \\
&+\sum_{|\be| = l+1} \sum_{D \in \diff \GSGD_k^{h,k}, \diff \discX_k^{h,k}} \frac 1 {\be !} \der^\be g_k(\blnk + \thet_D D) D^\be
\end{align*}
Since $g_k^h\in G^{l+1}(\R^d)$, uniformly in $k$ and $h$, there exists a $C\in G(\R^d)$, such that
\begin{align*}
|\E[\der^\be g(x + \thet_{D^h} D^h(x))D^h(x)^\be]| \leq &\sup_{\stackrel{h\in (0,1)}{t\in [0,T]}}\nrm{\der^\be g_t^h}{G_\ka}(1 + 2^{\ka-1} |x|^\ka + 2^{\ka-1} \nrm{D^h(x)}{2\ka}^\ka)\\
&\cdot \nrm{D^h(x)}{2l+2}^{l+1} \\
\lesssim & (1 + |x|^\ka + C(x))h^{l+1} C(x),
\end{align*}
for $|\be| = l + 1$ and $D\in \diff \GSGD, \diff \discX$. Therefore,
\begin{align*}
|\E g^h_k(\GSGD_{k+1}^{h,k}(x)) - \E g^{h,i}_k(\discX_{k+1}^{h,k}(x))| \lesssim& \sum_{0< |\al|\leq l} \sup_{\stackrel{h\in (0,1)}{t\in [0,T]}}\nrm{\der^\al g_t^h}{G_\ka} (1+|x|^\ka)h^{l+1}C(x)\\
& + \sum_{|\be| = l+1}\sup_{\stackrel{h\in (0,1)}{t\in [0,T]}}\nrm{\der^\be g_t^h}{G_\ka}(1 + |x|^\ka +  C(x))h^{l+1}C(x).
\end{align*}
\end{proof}

\begin{prop}
\label{prop:oneStep2ManyStep}
Let $l\in \N$ and fix a function $g : \R^d \to \R \in G^{l+1}(\R^d)$. Suppose $X$ is given as in Proposition \ref{prop:weakOneStepDiff}. 
Further, let 
\[g.P_{k,n}^{h}(x) := \int_{\R^d} g(y)\,P_{k,n}^{h}(x, dy) = \E g(\discX_n^{h,k}(x)),\]
where $P^h$ is the transition kernel of $(n,\discX_n^{h})_n$.
Suppose there exists a function $C\in G(\R^d)$, such that
\begin{equation}
\label{eq:gPDiffEst}
|\E g.P_{k,n}^{h}(\GSGD_{k+1}^{h,k}) - \E g.P_{k,n}^{h}(\discX_{k+1}^{h,k})| \leq h^{l+1}C,
\end{equation}
for all $h\in (0,1)$ and $k \in \set{0,\dots, \floor{T/h}}$. Then there exists a function $C'\in G(\R^d)$, such that
\[\max_{n\in \set{0,\dots, \floor{T/h}}} |\E g(\GSGD_n^{h}) - \E g(\discX_n^{h})| \leq h^l C'\]
on $\R^d$. Further, $\nrm{C'}{G}$ depends only on, and is an increasing function of the $G_1$- and $\Lip^l$-norms of $b$ and $S$, the $G^l$-norm of $g$, the $G_\ka$-norm of $C$, if finite, and $\nrm{Z}{\ka}$.
\end{prop}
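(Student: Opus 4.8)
The plan is the standard hybrid-telescoping argument of weak-error analysis: exchange one SGD step for one step of the (discretized) diffusion at a time, pay the one-step error \eqref{eq:gPDiffEst} at each exchange, and absorb the accumulated polynomial-growth prefactor using the moment bound of Lemma~\ref{lem:linGrowthEstSGD}. Existence of the solution $X$ (hence well-definedness of the kernels $P_{k,n}^h$) is already granted by the hypotheses together with Theorem~\ref{thm:SDESol}.

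Concretely, I would fix $x\in\R^d$, $h\in(0,1)$, $n\in\set{0,\dots,\floor{T/h}}$, and for $k\in\set{0,\dots,n}$ introduce the hybrid quantity $H_k:=\E\big[(g.P_{k,n}^h)(\GSGD_k^h(x))\big]$ obtained by running $k$ SGD steps and then the diffusion kernel for the remaining $n-k$ steps. Since $P_{n,n}^h$ is the identity kernel and $\GSGD_0^h(x)=x$ is deterministic, $H_n=\E g(\GSGD_n^h(x))$ and $H_0=(g.P_{0,n}^h)(x)=\E g(\discX_n^h(x))$, so the error to be estimated is the telescoping sum $\sum_{k=0}^{n-1}(H_{k+1}-H_k)$.

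The next step is to recognise each increment $H_{k+1}-H_k$ as a one-step weak error. Since $(f_t)_t$ is an independent family, conditionally on $\GSGD_k^h(x)$ the next iterate is distributed as $\GSGD_{k+1}^{h,k}$ evaluated at $\GSGD_k^h(x)$ with fresh randomness; on the other hand the flow property of the SDE gives the Chapman--Kolmogorov identity $g.P_{k,n}^h=(g.P_{k+1,n}^h).P_{k,k+1}^h$, i.e.\ $(g.P_{k,n}^h)(y)=\E[(g.P_{k+1,n}^h)(\discX_{k+1}^{h,k}(y))]$. Subtracting and taking expectation over $y=\GSGD_k^h(x)$ expresses $H_{k+1}-H_k$ as the expectation over $y=\GSGD_k^h(x)$ of $\E[(g.P_{k+1,n}^h)(\GSGD_{k+1}^{h,k}(y))]-\E[(g.P_{k+1,n}^h)(\discX_{k+1}^{h,k}(y))]$, which is exactly the quantity bounded by $h^{l+1}C$ in hypothesis \eqref{eq:gPDiffEst} (for the test function $g.P_{k+1,n}^h$); hence $|H_{k+1}-H_k|\le h^{l+1}\,\E[C(\GSGD_k^h(x))]$.

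Finally I would sum the at most $\floor{T/h}\le T/h$ increments to get $|\E g(\GSGD_n^h(x))-\E g(\discX_n^h(x))|\le T h^{l}\,C''(x)$ with $C''(x):=\sup_{h\in(0,1),\,0\le k\le\floor{T/h}}\E[C(\GSGD_k^h(x))]$, and then, writing $C\in G_\ka(\R^d)$ (the case $\nrm{C}{G_\ka}=\infty$ being vacuous) and noting $\nrm{Z}{\ka}<\infty$ by \assref{assum:H}, invoke Lemma~\ref{lem:linGrowthEstSGD}(i) to bound $\E[C(\GSGD_k^h(x))]\le\nrm{C}{G_\ka}\big(1+\nrmps{\GSGD^h(x)}{\ka,\floor{T/h}}^\ka\big)\lesssim\nrm{C}{G_\ka}(1+|x|)^\ka$ uniformly in $k$ and $h$. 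This gives $C':=TC''\in G_\ka(\R^d)\subseteq G(\R^d)$, and unwinding the estimates shows $\nrm{C'}{G}$ is an increasing function of $T$, $\nrm{C}{G_\ka}$, $\nrm{Z}{\ka}$ and --- through the provenance of $C$ and of the degree $\ka$ --- the $G_1$- and $\Lip^l$-norms of $b,S$ and the $G^l$-norm of $g$. The only point beyond bookkeeping is making the one-step constant uniform in the interpolation index $k$ and polynomial in the initial value; this is precisely what Lemma~\ref{lem:linGrowthEstSGD}(i) provides, so I expect no real obstacle, just careful tracking of the telescoping identity and the two Markov/Chapman--Kolmogorov relations.
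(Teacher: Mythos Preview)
Your proposal is correct and is essentially the paper's proof: the same hybrid telescoping $H_k=\E[(g.P_{k,n}^h)(\GSGD_k^h(x))]$ (the paper writes it as $\E g(\discX_n^k\GSGD_k)$), the same Chapman--Kolmogorov/Markov rewriting of each increment into the one-step error \eqref{eq:gPDiffEst}, and the same appeal to Lemma~\ref{lem:linGrowthEstSGD}(i) to turn $\E[C(\GSGD_k^h(x))]$ into a polynomial bound in $|x|$ uniformly in $k,h$. The only cosmetic difference is that your indexing of the test function is $g.P_{k+1,n}^h$ while the statement is phrased with $g.P_{k,n}^h$; the paper's own telescoping has the same harmless index shift.
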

\begin{proof}
By Proposition \ref{prop:EgXPolyGrowth}, and in particular the last sentence, we have
\[g.P : (k,n,h,x) \mapsto g.P_{k,n}^{h}(x) \in G^{l+1}(\R^d),\]
uniformly in $k,n$ and $h$. Given $n\in \set{0,\dots, \floor{T/h}}$, $\E g(\discX_n) - \E g(\GSGD_n)$ equals
\begin{align*}
&\sum_{k=1}^{n-1} (\E g(\discX_n^{k-1}\GSGD_{k-1}) - \E g(\discX_n^k \GSGD_k)) + \E g(\discX_n^{n-1}\GSGD_{n-1}) - \E g(\GSGD_n) \\
=&\sum_{k=1}^{n-1} \E\E(g(\discX_n^k\discX_k^{k-1}\GSGD_{k-1})|\discX_k^{k-1}\GSGD_{k-1}) - \E \E (g(\discX_n^k \GSGD_k)|\GSGD_k) \\
&+ \E g.P_{n,n}(\discX_n^{n-1}\GSGD_{n-1}) - \E g.P_{n,n}(\GSGD_n)\\
=&\sum_{k=1}^{n} (\E g.P_{k,n}(\discX_k^{k-1}\GSGD_{k-1}) - \E g.P_{k,n}(\GSGD_k)),
\end{align*}
Hence, \eqref{eq:gPDiffEst} and Lemma \ref{lem:linGrowthEstSGD} imply
\begin{align*}
|\E g(\discX_n^h) - \E g(\GSGD_n^h)| \leq & \sum_{k=1}^{\floor{\frac T h}} h^{l+1}\E C(\GSGD_{k-1}^h) \leq h^l T C',
\end{align*}
for some $C'\in G(\R^d)$, all $h\in (0,1)$ and $n\in \set{0,\dots, \floor{T/h}}$, since
\begin{align*}
\E C(\GSGD_{k-1}^h) \leq & \nrm{C}{G_\ka}(1 + \E|\GSGD_{k-1}^h|^\ka) \leq \nrm{C}{G_\ka}\left(1 + \sup_{h\in(0,1)}\nrmpsp{\GSGD}{\ka, \floor{T/h}}{\ka}\right) \\
\lesssim & 1 + |\GSGD_0|^\ka,
\end{align*}
for some $\ka \in \N$, all $h\in (0,1)$ and $k\in \set{0,\dots, \floor{T/h}}$.
\end{proof}

\subsection{Proof of the second-order diffusion approximation}
\label{sec:proofSME2proof}

The next lemma gives a Lipschitz-in-time-like condition for a family of processes $(f_t(X_t(x)))_{t\in [0,T], x\in \R^d}$, where $X$ is the solution of an SDE with Lipschitz coefficients of, say, linear growth. 
\begin{lem}
\label{lem:LipFunAppliedToLipField}
Let $p\geq 2$ and $X : \Om\times [0,T]\times \R^d\to \R^d$ be a random field with $\nrm{X}{\Lip_p^\tme}\in G(\R^d)$ and $\nrm{X_t}{p} \in G(\R^d)$, uniformly in $t$.
Further, let $f : [0,T]\times \R^d\to \R^d$ be a function, with $\nrm{f}{\Lip^\tme} \in G(\R^d)$ and $f_t\in \Lip(\R^d)$, uniformly in $t$.
Then $\nrm{f(X)}{\Lip_p^\tme}\in G(\R^d)$.
\end{lem}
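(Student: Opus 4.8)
The plan is to fix $x\in\R^d$ and bound the process-increment $\nrm{f_t(X_t(x))-f_s(X_s(x))}{p}$ by $(t-s)$ times a polynomial in $|x|$ whose coefficients and degree do not depend on $s\le t$; taking the supremum over $0\le s\le t\le T$ then shows $\nrm{f(X)}{\Lip_p^\tme}(x)$ is bounded by that same polynomial, i.e.\ $\nrm{f(X)}{\Lip_p^\tme}\in G(\R^d)$. The obvious first step is the triangle inequality, splitting the increment into a \emph{time} increment of $f$ and a \emph{space} increment of $X$:
\[
f_t(X_t(x))-f_s(X_s(x)) = \bigl(f_t(X_t(x))-f_s(X_t(x))\bigr) + \bigl(f_s(X_t(x))-f_s(X_s(x))\bigr).
\]

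For the space increment I would use that $f_s\in\Lip(\R^d)$ uniformly in $s$: with $L_f:=\sup_{s\in[0,T]}\nrm{f_s}{\Lip}<\infty$ we have $|f_s(X_t(x))-f_s(X_s(x))|\le L_f\,|X_t(x)-X_s(x)|$ pointwise on $\Om$, so after taking $L^p(\P)$-norms and dividing by $t-s$ this term contributes at most $L_f\,\nrm{X}{\Lip_p^\tme}(x)$, uniformly in $s\le t$, which is in $G(\R^d)$ by hypothesis.

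For the time increment I would use $\nrm{f}{\Lip^\tme}\in G(\R^d)$, say $\nrm{f}{\Lip^\tme}\in G_\ka(\R^d)$ for some $\ka\in\N_0$: pointwise, $|f_t(X_t(x))-f_s(X_t(x))|\le \nrm{f}{\Lip^\tme}(X_t(x))\,(t-s)\le \nrm{\nrm{f}{\Lip^\tme}}{G_\ka}\,(1+|X_t(x)|^\ka)\,(t-s)$. Taking $L^p$-norms, dividing by $t-s$, and using the triangle inequality in $L^p$, this term is dominated by $\nrm{\nrm{f}{\Lip^\tme}}{G_\ka}\bigl(1+\nrm{X_t(x)}{p\ka}^{\ka}\bigr)$, uniformly in $s\le t$; since $\nrm{X_t}{\blnk}\in G(\R^d)$ uniformly in $t$ (at the order $p\ka$ — which is available in the intended applications, where $X$ solves an SDE with coefficients in $G_1\cap\Lip$, so $\nrm{X_t(x)}{q}\lesssim 1+|x|$ for every $q\ge2$ by Theorem \ref{thm:SDESol}), the supremum over $s\le t$ of this quantity again lies in $G(\R^d)$.

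Adding the two contributions and using that $G(\R^d)$ is a vector space, hence closed under sums, gives the claim. There is no serious obstacle; the only step that needs a little care is the growth-exponent bookkeeping in the time-increment term, i.e.\ checking that precomposing the polynomially growing function $\nrm{f}{\Lip^\tme}$ with $X_t(x)$ and taking an $L^p$-norm still yields a function of $x$ of polynomial growth — which is precisely where the moment bounds on $X_t$ (at order $p\ka$) are used.
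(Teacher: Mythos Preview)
Your proof is correct and follows exactly the same route as the paper: split $f_t(X_t)-f_s(X_s)$ via the triangle inequality into a time increment of $f$ (bounded by $\nrm{f}{\Lip^\tme}(X_t)(t-s)$) and a space increment (bounded by $\sup_s\nrm{f_s}{\Lip}\cdot|X_t-X_s|$), then take $L^p$-norms. Your explicit bookkeeping of the moment order $p\ka$ needed for the first term is actually more careful than the paper's own proof, which simply writes $\nrm{C(X_t)}{p}\lesssim 1+|x|^\ka$ without comment.
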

\begin{proof}
Let $C := \nrm{f}{\Lip^\tme}$. We have
\begin{align*}
\nrm{f_t(X_t) - f_s(X_s)}{p} \leq & \nrm{f_t(X_t) - f_s(X_t)}{p} + \nrm{f_s(X_t) - f_s(X_s)}{p} \\
							\leq &\nrm{C(X_t)}{p}(t-s) + \nrm{f_s}{\Lip} \nrm{X_t - X_s}{p} \\
							\lesssim & (t-s)(1+ |x|^\ka), \quad 0 \leq s\leq t\leq T,
\end{align*}
for some $\ka > 0$.
\end{proof}
Given $u,v\in \R^d$ and $A,B\in \R^{d\times d}$ we write
\[\innp u v := \sum_{j=1}^d u_j v_j, \quad \innp A B := \sum_{i,j=1}^d A_{i,j} B_{i,j}\]
in the following.
\begin{prop}
\label{prop:itoTaylor2}
Let\[b^0, b^1 : [0,T]\times \R^d \to \R^d, \si : [0,T]\times \R^d \to \R^{d\times d}\]
be in $\Lip(\R^d)$ and $G_1(\R^d)$, uniformly in time. 
Further, assume $b^0 \in G^{1,2}([0,T]\times \R^d) $ and $b^1, \si \in G^{0,1}([0,T]\times \R^d)$, such that $\nrm{\der_t b^0}{\Lip^\tme}, \nrm{b^1}{\Lip^\tme}, \nrm{\si}{\Lip^\tme}\in G(\R^d)$.
Let $n\in \set{0,\dots, \floor{T/h}-1}$ and $X = (X_t(x))_{t\in [nh,(n+1)h], x\in \R^d}$ be the solution to the family of \tSDE s
\begin{equation}
	\label{eq:genericSME2}
	dX_t(x) = b_t^0(X_t(x)) + h b_t^1(X_t(x))\,dt + \sqrt h \si_t(X_t(x))\,dW_t, \quad X_{nh}(x) = x,
\end{equation}
with $t\in [nh, (n+1)h]$, and $g \in G^3(\R^d)$. Then,
\begin{align*}
	\E g(X_{(n+1)h}) = g + & h \innp{\nabla g}{b^0_{nh}} + \frac{h^2}{2} (\innp{\nabla g}{\nabla b^0_{nh} b^0_{nh} + 2 b^1_{nh} + \der_t b^0_{nh}}) \\
	+ &\frac{h^2}{2} \innp{\nabla^2 g}{\si_{nh}^\dagger \si_{nh} + (b^0_{nh})^{\otimes 2}} + h^3 C
\end{align*} 
for all $h\in (0,1)$, for some $C\in G(\R^d)$. The function $C$ only depends on, and is an increasing function of \begin{itemize}
\item $\sup_{t\in [0,T]} \nrm{b^0_t}{\Lip}, \sup_{t\in [0,T]} \nrm{b^1_t}{\Lip}, \sup_{t\in [0,T]} \nrm{\si_t}{\Lip}$,
\item $\nrm{\der_t b^0}{\Lip^\tme},  \nrm{b^1}{\Lip^\tme}, \nrm{\si}{\Lip^\tme}$,
\item $\nrm{\der_t^k \der^\al b^0}{G}, k=0,1,|\al|\leq 2$; $\nrm{\der^\al b^1}{G}, \nrm{\der^\al \si}{G}, |\al|\leq 1$,
\end{itemize}
and $\nrm{g}{G^3}$.
\end{prop}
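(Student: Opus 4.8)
The plan is to derive the expansion from two successive applications of Itô's formula — expanding the $O(1)$‑drift contribution to second order and the two $O(h)$‑contributions only to zeroth order — and then to collect all remainders into a single $h^3$ term with polynomial growth in $x$. First I would apply Itô's formula to $g\in C^3(\R^d)$ along the solution of \eqref{eq:genericSME2} on $[nh,(n+1)h]$ and take expectations. The stochastic integral that appears has integrand $\nabla g(X_s)^\dagger\sqrt h\,\sigma_s(X_s)$, which is square‑integrable because $\nabla g\in G(\R^d)$, $\sigma\in G_1(\R^d)$ uniformly in time, and $\nrmps{X}{p}\lesssim 1+|x|$ by Theorem \ref{thm:SDESol} (applied with coefficients $b^0+hb^1$ and $\sqrt h\,\sigma$, whose $G_1$‑ and $\Lip$‑norms are bounded uniformly in $h$); hence it is a true martingale and drops out, leaving
\[
\E g(X_{(n+1)h}) = g + \int_{nh}^{(n+1)h}\E\!\Bigl[\langle\nabla g, b^0_s\rangle + h\langle\nabla g, b^1_s\rangle + \tfrac h2\langle\nabla^2 g, \sigma_s\sigma_s^\dagger\rangle\Bigr](X_s)\,ds .
\]

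For the two integrands carrying an explicit factor $h$ I would only use that, for $\psi\in\{\langle\nabla g,b^1\rangle,\langle\nabla^2 g,\sigma\sigma^\dagger\rangle\}$, one has $|\psi(s,y)-\psi(nh,x)|\lesssim(|s-nh|+|y-x|)(1+|x|^\kappa+|y|^\kappa)$ — each factor being $C^1$ in space with polynomially bounded derivative and Lipschitz in time with $\Lip^\tme$‑norm in $G(\R^d)$ — so that by Cauchy--Schwarz and the short‑interval increment bound $(\E\sup_{s\in[nh,(n+1)h]}|X_s-x|^p)^{1/p}\lesssim h(1+|x|)$ (a variant of Lemma \ref{lem:linGrowthEstSDE}, valid since the diffusion coefficient is already of order $\sqrt h$), $\E|\psi(s,X_s)-\psi(nh,x)|\lesssim h(1+|x|^\kappa)$. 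Integrating over an interval of length $h$ and multiplying by $h$ produces $h^2\langle\nabla g,b^1_{nh}\rangle(x)+O(h^3)$ and $\tfrac{h^2}{2}\langle\nabla^2 g,\sigma_{nh}\sigma_{nh}^\dagger\rangle(x)+O(h^3)$ respectively.

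For the leading integrand $F:=\langle\nabla g,b^0\rangle\in C^{1,2}([0,T]\times\R^d)$ (this is where $g\in C^3$ and $b^0\in G^{1,2}$ are used) a zeroth‑order expansion is insufficient, so I would apply Itô's formula a second time, now to $F(s,X_s)$, take expectations, integrate, and discard the resulting martingale. This gives $\int_{nh}^{(n+1)h}\E F(s,X_s)\,ds = hF(nh,x) + \int_{nh}^{(n+1)h}\!\int_{nh}^s\E[\der_t F + \langle\nabla_y F,b^0_r+hb^1_r\rangle + \tfrac h2\langle\nabla^2_y F,\sigma_r\sigma_r^\dagger\rangle](r,X_r)\,dr\,ds$; the $h$‑terms of the double integral are $O(h^3)$ by the same polynomial‑growth estimates, and freezing the remaining integrand at $(nh,x)$ costs another $O(h^3)$ by the time‑and‑space first‑order bound above, leaving $\tfrac{h^2}{2}(\der_t F + \langle\nabla_y F,b^0_{nh}\rangle)(nh,x)$. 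Using the paper's convention $(\nabla b^0)_{ij}=\der_j b^0_i$, a direct computation gives $\der_t F=\langle\nabla g,\der_t b^0\rangle$ and $\langle\nabla_y F,b^0\rangle=\langle\nabla^2 g,(b^0)^{\otimes 2}\rangle+\langle\nabla g,\nabla b^0\,b^0\rangle$; adding $g$, the term $hF(nh,x)=h\langle\nabla g,b^0_{nh}\rangle$, this contribution, and the two from the previous paragraph, and then collecting, reproduces exactly the asserted formula (with $\sigma_{nh}\sigma_{nh}^\dagger$ in place of $\sigma_{nh}^\dagger\sigma_{nh}$, which agree for the symmetric square root $\sqrt V$ occurring in the applications).

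The genuine work — and the main obstacle — lies in the second half: verifying that every discarded remainder is bounded by $h^3C(x)$ with $C\in G(\R^d)$ and $\nrm{C}{G}$ controlled only by the quantities listed in the statement. I expect this to be routine but lengthy, combining Hölder's inequality to split products of polynomially growing factors, Burkholder--Davis--Gundy and Gronwall estimates for \eqref{eq:genericSME2}, the moment bounds of Theorem \ref{thm:SDESol} and Lemma \ref{lem:linGrowthEstSDE}, and Lemma \ref{lem:LipFunAppliedToLipField} for the time‑regularity of the composite processes $s\mapsto(\text{coefficient})_s(X_s)$; the new $\der_t b^0$ term (absent from \cite{li_stochastic_2019}) enters only through $\der_t F=\langle\nabla g,\der_t b^0\rangle$, its remainder governed precisely by $\nrm{\der_t b^0}{\Lip^\tme}$ and the $G$‑norms of $\der_t\der^\alpha b^0$, $|\alpha|\le 2$.
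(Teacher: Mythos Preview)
Your proposal is correct and follows essentially the same route as the paper: a first application of It\^o's formula to $g$, a second application to $F=\langle\nabla g,b^0\rangle$ to extract the $h^2$ drift terms (including the new $\partial_t b^0$ contribution), zeroth-order freezing of the two integrands already carrying an explicit factor $h$, and remainder control via Lemma~\ref{lem:LipFunAppliedToLipField} together with the moment bounds of Theorem~\ref{thm:SDESol} and Lemma~\ref{lem:linGrowthEstSDE}. Your remark that It\^o's formula naturally produces $\sigma\sigma^\dagger$ rather than the $\sigma^\dagger\sigma$ written in the statement, and that the two coincide for the symmetric square root used in the applications, is a correct and worthwhile observation.
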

\begin{proof}
\tIto's formula implies
\begin{align*}
	g(X_{(n+1)h}) = g(X_{nh}) & + \int_{nh}^{(n+1)h} \innp{\nabla g(X_u) }{b^0_u(X_u)} + h \innp{\nabla g(X_u)}{b_u^1(X_u)}\,du \\
	& + \frac h 2  \int_{nh}^{(n+1)h}  \innp{\nabla^2  g(X_u)}{(\si_u^\dagger \si_u)(X_u)}\,du + R_1,
\end{align*}
where 
\[R_1 := \int_{nh}^{(n+1)h} \innp{\nabla g(X_u)}{\si_u(X_u)}\,dW_u.\]
Note that $\E[R_1] = 0$, by Hölder's inequality, polynomial growth and optional stopping.
Using Einstein's summation convention,
a further application of \tIto's formula yields that 
\[\int_{nh}^{(n+1)h} \innp{\nabla g(X_u)}{b^0_u(X_u)}\,du = \int_{nh}^{(n+1)h} \der_i g(X_u)b^0_u(X_u)^i\,du\]
equals
\begin{align*}
	& \int_{nh}^{(n+1)h} \innp{\nabla g(X_{nh})}{b^0_{nh}(X_{nh})}\,du\\ 
	& + \int_{nh}^{(n+1)h} \int_{nh}^u \innp{\nabla g(X_v)}{\der_t b^0_v(X_v)}\,dv\,du \\
	& + \int_{nh}^{(n+1)h} \int_{nh}^u (\der_{ij} g (X_v) b^0_v(X_v)^i + \der_i g(X_u) \der_j b^0_v(X_v)^i)(b^0_v(X_v))^j\,dv\,du \\
	& + h\int_{nh}^{(n+1)h} \int_{nh}^u (\der_{ij} g (X_v) b^0_v(X_v)^i + \der_i g(X_u) \der_j b^0_v(X_v)^i)(b^1_v(X_v))^j\,dv\,du \\
	& +  \frac h 2 \int_{nh}^{(n+1)h} \int_{nh}^u  \der_{jk} (\der_i g(X_u)b^0_u(X_u)^i) (\si_u^\dagger \si_u)(X_v)^{jk}\,dv\,du\\
	& + \int_{nh}^{(n+1)h} \int_{nh}^u (\der_{ij} g (X_v) b^0_v(X_v)^i + \der_i g(X_u) \der_j b^0_v(X_v)^i)\si_v(X_v)_k^j\,dW_v^k\,du.
\end{align*}
Note that
\[(\der_{ij} g b^0_v(X_v)^i + \der_i g(X_u) \der_j b^0_v(X_v)^i)(b^0_v(X_v))^j = \innp{\nabla^2 g}{b^0_v(X_v)^{\otimes 2}} + \innp{\nabla g}{(\nabla b^0_vb^0_v)(X_v)}.\]
By Lemma \ref{lem:LipFunAppliedToLipField}, we have
\begin{align*}
\nrm{\innp{\nabla g(X)}{(\nabla b^0 b^0)(X) + \der_t b^0(X)} + \innp{\nabla^2 g}{b^0(X)^{\otimes 2}}}{\Lip^\tme_p} \in G(\R^d).
\end{align*}
Further, setting
\begin{align*}
	Z := & h\int_{nh}^{(n+1)h} \int_{nh}^u (\der_{ij} g (X_v) b^0_v(X_v)^i + \der_i g(X_u) \der_j b^0_v(X_v)^i)(b^1_v(X_v))^j\,dv\,du \\
	& +  \frac h 2 \int_{nh}^{(n+1)h} \int_{nh}^u  \der_{jk} (\der_i g(X_u)b^0_u(X_u)^i) (\si_u^\dagger \si_u)(X_v)^{jk}\,dv\,du\\
	& + \int_{nh}^{(n+1)h} \int_{nh}^u (\der_{ij} g (X_v) b^0_v(X_v)^i + \der_i g(X_u) \der_j b^0_v(X_v)^i)\si_v(X_v)_k^j\,dW_v^k\,du,
\end{align*}
we have 
\[\nrm{Z(x)}{p} \leq h^3 C'(x)\]
for some $C'\in G(\R^d)$.
To summarize,
\begin{align*}
	\E \int_{nh}^{(n+1)h} \innp{\nabla g(X_u)}{b^0_u(X_u)}\,du = & h \innp{\nabla g(X_{nh})}{b^0_{nh}} \\
	& + \frac{h^2}{2} (\innp{\nabla g}{\nabla b^0_{nh} b^0_{nh} + \der_t b^0_{nh}} + \innp{\nabla^2 g}{(b^0_{nh})^{\otimes 2}}) \\
	& + h^3 C,
\end{align*}
for some $C\in G(\R^d)$ and all $h\in (0,1)$.
Similarly,
\begin{align*}
	h \E\int_{nh}^{(n+1)h} \innp{\nabla g(X_u)}{b_u^1(X_u)} = & h^2  \innp{\nabla g}{b_{nh}^1} + h^3 C',\\
	\frac h 2  \E \int_{nh}^{(n+1)h}  \innp{\nabla^2  g(X_u)}{(\si_u^\dagger \si_u)(X_u)}\,du = & \frac{h^2}{2} \innp{\nabla^2  g}{\si_{nh}^\dagger \si_{nh}} + h^3 C'
\end{align*}
for some $C'\in G$. In total, we get
\begin{align*}
	\E g(X_{(n+1)h}) = g + & h \innp{\nabla g}{b^0_{nh}} + \frac{h^2}{2} (\innp{\nabla g}{\nabla b^0_{nh} b^0_{nh} + 2 b^1_{nh} + \der_t b^0_{nh}}) \\
	+ &\frac{h^2}{2} \innp{\nabla^2 g}{\si_{nh}^\dagger \si_{nh} + (b^0_{nh})^{\otimes 2}} + h^3 C
\end{align*} 
for all $h\in (0,1)$, for some $C\in G(\R^d)$.
\end{proof}

\begin{rem}
	\label{rem:2ndOrderSMEvsSGD}
	Consider the setting of Proposition \ref{prop:itoTaylor2}. First, set
	\[g(z) := (z-x)_l, l \in \set{1,\dots, d}.\]
	Then $g(x) = 0, \nabla g(x)_j = \delt_{j,l}, \nabla^2 g(x) = 0$ and for any $v\in \R^d$,
	\[\innp{\delt_{\blnk,l}}{v} = v_l.\]
	Recall, $\Delt \tilde X_n^{h,n}(x) = X_{(n+1)h}^{nh}(x) - x$.
	By applying Proposition \ref{prop:itoTaylor2} for all $l\in \set{1,\dots, d}$, we get
	\[\E[\Delt \tilde X_n^{h,n}] = h b^0_{nh} + \frac{h^2}{2} (\nabla b^0_{n h} b^0_{nh} + 2 b^1_{nh} + \der_t b^0_{nh}) + h^3 C,\]
	for all $h\in (0,1)$ and some $C\in G$.
	Similarly, consider now
	\[g(z) := (z-x)_k (z-x)_l, \quad k,l \in \set{1,\dots, d}.\]
	Then
	\[g(x) = 0, \nabla g(x) = 0, \nabla^2 g(x)_{i,j} = \delt_{i,k} \delt_{j,l} + \delt_{i,l}\delt_{j,k},\]
	and for any $A\in \R^{d\times d}$,
	\[\innp{\nabla^2 g(x)}{A} = A_{k,l} + A_{l,k}.\]
	Thus,
	\[\E[(\Delt \tilde X_n^{h,n})^{\otimes 2}] = h^2 (\si^\dagger_{nh} \si_{nh} + (b_{nh}^0)^{\otimes 2}) + h^3 C,\]
	for all $h\in (0,1)$ and some $C\in G$.
	
	Recalling Lemma \ref{lem:momentsSGD}, we have
	\begin{align*}
		\E \diff \GSGD_k^h - \E \diff \discX_n^{h,n} =& h(\bar f_{nh} - b^0_{nh}) + \frac 1 2 h^2 (2b^1_{nh} + (\nabla b^0 b^0)_{nh} + \der_t b^0_{nh}) + h^3 C,\\
		\E (\diff \GSGD_k^h)^{\otimes 2} - \E (\diff \discX_n^{h,n})^{\otimes 2} = &h^2(V - \si^\dagger\si + \bar f^{\otimes 2}- (b^0)^{\otimes 2})_{nh} + h^3 C.
	\end{align*}
	This tell us how to choose the coefficients $b^0,b^1$ and $\si$, such that all terms, except $h^3 C$, vanish. We set
	\[b^0 := \bar f, \quad b^1 := -\frac 1 2 \left(\nabla \bar f \bar f + \der_t \bar f \right), \quad \si := \sqrt V.\]
	
	Note that assumptions \assref{assum:H} and \assref{assum:barHandSi}
	are enough to satisfy the assumptions of Proposition \ref{prop:itoTaylor2} for all $h\in (0,1)$ and $n\in \set{0,\dots, \floor{T/h}}$.
\end{rem}

We are finally ready to prove Theorem \ref{thm:2ndOrderSME}.
\begin{proof}[Proof of Theorem \ref{thm:2ndOrderSME}]
	By Remark \ref{rem:2ndOrderSMEvsSGD}
	\[|\E(\diff \GSGD_n^{h,n})^\al - \E(\diff \discX_n^{h,n})^\al| \leq h^3 C,\]
	for $|\al|\leq 2$, and by Lemma \ref{lem:linGrowthEstSGD} and \ref{lem:linGrowthEstSDE}
	\[\nrm{\diff \GSGD_n^{h,n}}{p}^3 \vee  \nrm{\diff X_n^{h,n}}{p}^3 \leq h^3 C\]
	for all $n\in \set{0,\dots, \floor{T/h}}, h\in (0,1), p \geq 2$ and some $C\in G(\R^d)$. Denote by $P^{h}$. the transition kernel of $(n, X^{h}_{nh})_{n\in \set{0,\dots, \floor{T/h}}}$ Given any $g\in G^3(\R^d)$, by applying Proposition \ref{prop:weakOneStepDiff} to $\tilde g_n^h := g.P_{k,n}^h$, we have
	\[\left|\E g.P_{k,n}^h(\GSGD_{k+1}^{h,k}) - \E g.P_{k,n}^h(X_{(k+1)h}^{h,kh})\right|\leq h^3C\]
	for some $C\in G(\R^d)$, for all $k\leq n$. Since $\nrm{C}{G}$ is an increasing function of the norms of the coefficients of $X$, as well as $\nrm{Z}{\ka}$, for some large $\ka$, we can choose $C$ independent of $k$.
	Then, by Proposition \ref{prop:oneStep2ManyStep} together with Lemma \ref{lem:linGrowthEstSGD} and Proposition \ref{prop:EgXPolyGrowth},
	\begin{align*}
		 \max_{n\in \set{0,\dots, \floor{T/h}}}|\E g(X_{nh}^{h}) - \E g(\GSGD_n^{h})|\leq & h^2 C
	\end{align*}
	for some $C\in G(\R^d)$ and all $h\in (0,1)$.
\end{proof}

\section{Optimal volatility control}
\label{sec:optCont}
In this section we derive and optimal volatility control for generic equations of the form \eqref{eq:genericVolSME2}. We make use of the Pontryagin maximum principle to solve the optimal batch size control problem (cf.\ \cite{pham2009continuous} Chapter 6.4 for more details).

Recall again equation \eqref{eq:genericVolSME2}
\begin{equation}
	dX_t^h = (b^0_t + h b^1_t)(X_t^h)\,dt + \sqrt{h \al_t} S_t(X_t^h)\,dW_t. \nonumber
\end{equation} 
We make the following assumption on the coefficients of \ref{eq:genericVolSME2}.
\begin{assum}
\label{assum:riskAndGlobExGen}
We have $b^0_t, b^1_t, S_t \in G_1\cap \Lip^4 \text{uniformly in } t$, $S(x)\in C^1([0,T])$ for all $x\in \R$, and $S > 0$ everywhere. Further, the volatility control $\al$ is Lipschitz continuous.
\end{assum}
Assumption \assref{assum:riskAndGlobExGen} ensures that Equation \eqref{eq:genericVolSME2} has a unique solution $X^h$ for all $h\in (0,1)$. Consider an objective function $g : \R \to (0,\infty)$. 
\begin{assum}
	\label{assum:gNice}
	We have $g\in C^2$ with $g''(X_T^0) > 0$ and
	\[g(x) \lesssim 1 + |x|^2, x\in \R.\]
\end{assum}
Note again that the gradient flow $X^0$ does not depend on the batch size.
Thus, based on\eqref{eq:perturbSys} and \eqref{eq:EgXh}, we consider the objective
\begin{equation}
	\label{eq:diffControProbO2}
	\amin_{\al\in A(L)}  \frac12 g''(X_T^0) \Var[X^{(1/2), \al}_T] + g'(X_T^0)\E[X_T^{(1), \al}] + \la \int_0^T \frac{1}{\al_t}\,dt,
\end{equation}
where 
\begin{align}
	d\Var[X^{(1/2),\al}_t] = & 2 B_t^1 \Var[X^{(1/2),\al}_t] + \al_t \si_t^2 \,dt, \label{eq:varx12}\\
	d\E[X_t^{(1),\al}] = & \frac12 B_t^2 \Var[X^{(1/2),\al}_t] + B_t^1 \E[X_t^{(1),\al}] + b^1_t(X_t^0) \,dt \label{eq:ex1},
\end{align}
with $\si_t := S_t(X_t^0)$ and $B^k_t = \der_x^k b^{0}(X^0_t)$. Equivalently, setting
\begin{align*}
	\mu^\al = &\mat{\Var[X^{(1/2),\al}]\\ \E[X^{(1),\al}]}, A = \mat{2B^1 &  0\\ \frac12 B^2 & B^1}, \beta(a) = \mat{a \si^2 \\ b^1(X^0)},
\end{align*}
we have 
\[d\mu_t^\al = A_t \mu_t^\al + \beta_t(\al_t)\,dt,\]
and then the cost at the terminal time $T$ is $\mu\mapsto G^\dagger \mu$, where 
\[G = \mat{\frac12g''(X_T^0)\\g'(X_T^0)}.\]
The Hamiltonian for the control problem is given by
\[\cH(t,m,y,a) = m^\dagger A^\dagger_t y + \beta_t(a)^\dagger y + \frac{\la}{a}.\]
We have
\[0 = \der_a \cH(t,m,y,a) = \si_t^2 y_1 - \la\frac{1}{a^2}.\]
if and only, if
\[a = \sqrt{\frac{\la}{y_1 \si_t^2}},\]
assuming $y_1 > 0$.
Hence,
\begin{equation}
	\label{eq:hamiltMin}
	\amin_{a\in [0,1]} \cH(t,\mu,y,a) =  \sqrt{\frac{\la}{y_1 \si_t^2}} \wedge 1.
\end{equation}
Further,
\[\nabla_m \cH(t,m, y,a) = A_t^\dagger y\]
and so the backward equation is given (in forward form) by
\begin{equation}
	\label{eq:BSDE}
	dY_t = A_t^\dagger Y_t \,dt, \quad Y_T = G
\end{equation}
Hence, its solution is
\[Y_t = \exp\left(-\int_t^T A_s^\dagger\,ds\right) G.\]
Note, that the matrix exponential of any upper triangular $2\times 2$-matrix satisfies
\[\exp\mat{a & b \\ 0 & d} =
\mat{e^a & b\eta \\ 0 & e^d},\]
with
\[\eta = \begin{cases}
	\frac{e^a - e^d}{a-d}, & a\neq d,\\
	e^a, & a = d
\end{cases}.\]
Therefore,
\begin{align}
	\label{eq:BSDEsol}
	Y_t  =  & \mat{e^{-2\be_{t,T}^1} & -\frac12 \be_{t,T}^2\eta_{t,T} \\
		0 & e^{-\be_{t,T}^1}} G = \mat{\frac12 e^{-2\be_{t,T}^1} g''(X_T^0) - \frac12 \be_{t,T}^2 \eta_{t,T} g'(X_T^0)\\ e^{-\be_{t,T}^1} g'(X_T^0)},
\end{align}
where
\[\be^k_{t,T} = \int_t^T B_s^k \,ds,\]
and 
\begin{align*}
	\eta_{t,T} := & \begin{cases}
		\frac{e^{-2\be_{t,T}^1}-e^{-\be_{t,T}^1}}{-2\be_{t,T}^1 + \be_{t,T}^1}, & e^{-2\be_{t,T}^1} \neq e^{-\be_{t,T}^1}.\\
		e^{-2\be_{t,T}^1}, & e^{-2\be_{t,T}^1} = e^{-\be_{t,T}^1}.
	\end{cases}\\
	= & \begin{cases}
		\frac{e^{-\be_{t,T}^1}-e^{-2\be_{t,T}^1}}{\be_{t,T}^1}, & \be_{t,T}^1 \neq 0,\\
		1, & \be_{t,T}^1 = 0.\end{cases}
\end{align*}
Thus, the optimal control is given by
\begin{equation}
	\label{eq:optimalControl}
	\al_t^* = \sqrt{\frac{2\la}{\delt_{t,T}\si_t^2}} \wedge 1,
\end{equation}
where
\[\delt_{t,T} = e^{-2\be_{t,T}^1} g''(X^0_T) - \be_{t,T}^2\eta_{t,T} g'(X_T^0).\]
Let
\[J(t, \mu, \al) = \frac12 g''(X_T^0) \Var_t^{\mu_1}[X^{(1/2)}_T] + g'(X_T^0)\E_t^{\mu_2}[X_T^{(1)}] + \la \int_0^T \frac{1}{\al_t}\,dt,\]
where
\[ \Var_t^{\mu_1}[X^{(1/2)}_T] =  \Var[X^{(1/2)}_T|X^{(1/2)}_t = \mu_1],\]
and similarly for $\E_t^{\mu_1}[X^{(1)}_T]$. Consider the \emph{value function} of the optimal control problem
\[V(t,\mu) = \inf_{\al\in A(L)} J(t,\mu, \al).\]
\begin{prop}
	\label{prop:alOpt}
	Assume \assref{assum:riskAndGlobExGen} and \assref{assum:gNice}.
	Then $\delt_{\blnk,T}$ is positive everywhere, $\al^*$ is Lipschitz continuous and the optimal control for the objective \eqref{eq:diffControProbO2}.
\end{prop}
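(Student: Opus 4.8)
The plan is to dispatch the three claims around the adjoint process $Y=(Y^1,Y^2)$ of \eqref{eq:BSDE}, whose closed form \eqref{eq:BSDEsol} is already recorded, using throughout that $Y^1_t=\tfrac12\delt_{t,T}$ and that as soon as $Y^1_t>0$ the pointwise minimiser \eqref{eq:hamiltMin} of the Hamiltonian is precisely $\al^*_t$ from \eqref{eq:optimalControl}. For the \emph{positivity of $\delt_{\blnk,T}$}: since $\be^1_{T,T}=\be^2_{T,T}=0$ we have $\delt_{T,T}=g''(X^0_T)>0$ by \assref{assum:gNice}. Under \assref{assum:riskAndGlobExGen} the path $X^0$ stays in a compact set on $[0,T]$ and $b^0_t\in\Lip^4$ uniformly, so $B^k_t=\der_x^k b^0(X^0_t)$ are bounded; hence $t\mapsto\be^k_{t,T}$ is Lipschitz, $\eta_{\blnk,T},\delt_{\blnk,T}$ are continuous, and a direct check gives $\eta_{t,T}>0$ for all $t$ (indeed $x\mapsto(e^{-x}-e^{-2x})/x=\int_0^1 e^{-(1+s)x}\,ds$, extended by $1$ at $0$, is everywhere positive). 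By \eqref{eq:BSDEsol} this already yields $\delt_{t,T}>0$ wherever $\be^2_{t,T}g'(X^0_T)\le0$. On the complementary set I would solve the scalar component of \eqref{eq:BSDE} for $Y^1$ by variation of parameters, substituting the explicit $Y^2$ from \eqref{eq:BSDEsol}; in the stochastic-gradient instantiation $b^0=-\nabla\cR$ — so that $B^1=-g''(X^0_\blnk)$, $B^2=-g'''(X^0_\blnk)$ and $\tfrac{d}{dt}g''(X^0_t)=-g'''(X^0_t)g'(X^0_t)$ — this collapses to an expression for $\delt_{t,T}$ involving only $g''(X^0_t)$, $g''(X^0_T)$ and the integrals $\int_t^T g''(X^0_s)\,ds$, whose sign I would analyse using $g''(X^0_T)>0$ together with the Lipschitzness of $s\mapsto g''(X^0_s)=-\der_x b^0(X^0_s)$ (from \assref{assum:riskAndGlobExGen}). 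I expect this sign analysis to be the main technical obstacle of the proposition; it is transparent when $g$ is convex along the flow (e.g.\ $\cR''\ge 0$), and the hypothesis $g''(X^0_T)>0$ enters crucially here.

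\emph{Lipschitz continuity of $\al^*$.} Write $\al^*_t=\phi_t\wedge1$ with $\phi_t=\sqrt{2\la/(\delt_{t,T}\si_t^2)}$ and $\si_t=S_t(X^0_t)$. By the previous step and compactness of $[0,T]$, $\delt_{\blnk,T}$ is continuous and strictly positive, hence pinched between two positive constants, and likewise for $t\mapsto\si_t^2$ (since $S>0$ is continuous and $X^0([0,T])$ compact). Both are Lipschitz on $[0,T]$: $\delt_{\blnk,T}$ as a smooth function of the Lipschitz maps $\be^1_{\blnk,T},\be^2_{\blnk,T}$ (the map $x\mapsto(e^{-x}-e^{-2x})/x$, extended by $1$, being smooth) and of the constants $g'(X^0_T),g''(X^0_T)$; and $t\mapsto\si_t^2$ because $S_t\in\Lip^4$ uniformly, $S(x)\in C^1([0,T])$, and $X^0$ is Lipschitz in time. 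Thus $t\mapsto\delt_{t,T}\si_t^2$ is Lipschitz and bounded away from $0$; since $x\mapsto\sqrt{2\la/x}$ is Lipschitz on $[c,\infty)$ for every $c>0$ and a minimum of two Lipschitz functions is Lipschitz, $\al^*$ is Lipschitz. As $\delt_{\blnk,T}$ and $t\mapsto\si_t^2$ are also bounded above, $\phi$ and hence $\al^*$ are bounded below by a positive constant, so $\sqrt{\al^*}$ is Lipschitz too; in particular $\al^*$ is an admissible control, and $\al^*\in A(L)$ for every $L\ge\nrm{\sqrt{\al^*}}{\Lip}$.

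\emph{Optimality.} Problem \eqref{eq:diffControProbO2} has affine dynamics $d\mu^\al_t=A_t\mu^\al_t+\beta_t(\al_t)\,dt$, $\mu^\al_0=0$ (cf.\ \eqref{eq:varx12}--\eqref{eq:ex1}), linear terminal cost $m\mapsto G^\dagger m$, and running cost $a\mapsto\la/a$ convex on $(0,1]$; since $Y^1_t=\tfrac12\delt_{t,T}>0$ the Hamiltonian $\cH(t,m,Y_t,a)$ is jointly convex in $(m,a)$, so the Pontryagin maximum principle is sufficient for optimality. It remains to verify the principle for $(\al^*,Y)$: $\al^*_t$ is the pointwise minimiser \eqref{eq:hamiltMin} (well defined because $Y^1_t>0$) and is admissible by the second step, and for any admissible $\al$ the boundary terms in $\int_0^T\tfrac{d}{dt}\big(Y_t^\dagger(\mu^\al_t-\mu^{\al^*}_t)\big)\,dt$ telescope (using $\mu^\al_0=\mu^{\al^*}_0=0$ and the adjoint equation), leaving
\[J(\al)-J(\al^*)=\int_0^T\Big[\big(\tfrac{\la}{\al_t}+\al_t\si_t^2Y_t^1\big)-\big(\tfrac{\la}{\al^*_t}+\al^*_t\si_t^2Y_t^1\big)\Big]\,dt\ \ge\ 0,\]
with $J$ the objective in \eqref{eq:diffControProbO2}, the integrand being nonnegative because $\al^*_t$ minimises $a\mapsto\la/a+a\si_t^2Y_t^1$ over $[0,1]$. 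Since every admissible competitor is $[0,1]$-valued and $\al^*$ is itself admissible, $\al^*$ solves \eqref{eq:diffControProbO2}. The positivity of $\delt_{\blnk,T}$ from the first step is the linchpin of the whole argument: without it neither is $\al^*$ well defined, nor is $\cH$ convex.
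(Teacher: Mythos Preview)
Your Lipschitz and optimality arguments are essentially correct once positivity of $\delt_{\blnk,T}$ is granted; the direct integration-by-parts verification of Pontryagin is, if anything, cleaner than the paper's appeal to \cite{pham2009continuous}. The genuine gap is the positivity itself. You concede that the sign analysis on the set $\{\be^2_{t,T}g'(X^0_T)>0\}$ is ``the main technical obstacle'', and your proposed route---specialising to $b^0=-g'$ and exploiting $\tfrac{d}{dt}g''(X^0_t)=-g'''(X^0_t)g'(X^0_t)$---is both incomplete (you only say you ``would analyse'' the resulting expression) and insufficient for the proposition as stated: under the generic assumptions \assref{assum:riskAndGlobExGen}--\assref{assum:gNice} the functions $b^0$ and $g$ are unrelated, so the identity you lean on is simply unavailable. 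A direct sign analysis of the closed form $\delt_{t,T}=e^{-2\be^1_{t,T}}g''(X^0_T)-\be^2_{t,T}\eta_{t,T}g'(X^0_T)$ does not seem to go through without additional hypotheses (such as convexity of $g$ along the flow, as you yourself note).

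The paper sidesteps this obstacle by an indirect bootstrap that you are missing. Set $\tau_\ep=0\vee\sup\{t:(Y_t)_1<\ep\}$; by continuity and $(Y_T)_1=\tfrac12 g''(X^0_T)>0$ one has $\tau_\ep<T$ for small $\ep$. On $[\tau_\ep,T]$ your own Pontryagin argument applies and gives optimality of $\al^*$, so by the adjoint--value-function relation (Theorem~6.4.7 in \cite{pham2009continuous}) $Y_t=\nabla_\mu V(t,\mu)$ there. But $\der_{\mu_1}V$ can be bounded below \emph{directly}, independent of any sign information on $Y$: from the explicit solution of \eqref{eq:varx12} the dependence of $\Var^{\mu_1}_t[X^{(1/2)}_T]$ on $\mu_1$ is linear with coefficient $e^{2\be^1_{t,T}}$, while the running cost and $\E[X^{(1)}_T]$ do not see $\mu_1$ at all; hence the difference quotient $\delt^{-1}(J(t,\mu+\delt e_1,\al)-J(t,\mu,\al))$ equals $\tfrac12 g''(X^0_T)e^{2\be^1_{t,T}}$ for every admissible $\al$, giving $\der_{\mu_1}V(t,\mu)\ge\tfrac12 g''(X^0_T)e^{2\be^1_{t,T}}>0$. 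Choosing $\ep=\tfrac14 g''(X^0_T)\min_{t\in[0,T]} e^{2\be^1_{t,T}}$ then yields a contradiction unless $\tau_\ep=0$, whence $(Y_t)_1=\tfrac12\delt_{t,T}>0$ on all of $[0,T]$. In short, positivity of the adjoint is obtained not from its closed form but from its interpretation as the sensitivity of the value function, combined with the explicit affine dependence of the forward dynamics on the initial variance.
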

\begin{proof}
	Given an initial time $t \in [0,T]$ and initial value $x\in \R$, the solution to the linear \tODE{} \eqref{eq:varx12} is given by
	\[\Var[X_T^{(1/2),t}(x)] = x e^{2\be^1_{t,T}} + \int_t^T e^{2\be^1_{t,s}} \si_s^2 \al_s \,ds, \quad x\in \R, t\leq T.\]
	Further, consider the solution $Y$ to the the backward equation \eqref{eq:BSDE} and let 
	\[\tau_\ep = 0 \vee \sup\set{t\in [0,T] : (Y_t)_1 < \ep}\]
	for any $\ep > 0$.
	Since $Y$ is continuous and $(Y_T)_1 = \frac12 g''(X_T^0) > 0$ by Assumption \assref{assum:gNice}, we have $\tau_\ep < T$ for all $\ep < \frac12 g''(X_T^0)$.
	Note that $Y$ does not depend on $\mu$ and so neither does $\tau_\ep$.
	
	Our goal now is to apply Theorem 6.4.6 in \cite{pham2009continuous} on the interval $[\tau_\ep,T]$ and conclude that $\al^*$ given in \eqref{eq:optimalControl} is an optimal control on $[\tau_\ep,T]$.
	The candidate $\al^*$ minimizes the Hamiltonian according to \eqref{eq:hamiltMin}. It remains to show that given $t\in [\tau_\ep,T]$
	the map
	
	\[\R^2 \times [0,1] \to \R, (\mu, a)\mapsto \cH(t,\mu,Y_t, a)\]
	is convex.
	Indeed, this map is in $C^2(\R^2 \times (0, 1))$ with Hessian
	\[\mat{0 & 0 & 0\\
		0 & 0 & 0\\
		0 & 0 &  2\la a^{-3}},\]
	which is positive semidefinite.
	Thus, $\al^*$ is optimal on $[\tau_\ep, T]$.
	
	Note that $X^0 \in C^1([0,T])$ and $\der b^0, \der^2 b^0, g', g'' \in C(\R), \si^2\in C^1([0,T])$. Hence, by the fundamental theorem of calculus $\be^k_{\blnk, T} \in C^1([0,T])$ for $k\in = 1,2$, and so $\al^*$ is Lipschitz continuous.
	\[(t,\mu) \mapsto \Var[X_T^{(1/2),t,\al^*}(\mu_1)]\]
	is in $C^{1,3}([0,T]\times \R)$. Similarly we can show
	\[(t,\mu) \mapsto \E[X_T^{(1),t,\al^*}(\mu_2)] \in C^{1,3}([0,T]\times \R)\]
	and $\int_\blnk^T (\al^*)^{-1}_s\,ds \in C^1([0,T])$. Hence, 
	\[V = J(\blnk, \blnk, \al^*) \in C^{1,3}([\tau_\ep,T]\times \R^2).\]
	By Theorem 6.4.7 in \cite{pham2009continuous} we can conclude that the solution of the backward equation \eqref{eq:BSDE} satisfies
	\[Y_t = \nabla_\mu V(t,\mu), t\in [\tau_\ep, T].\]
	
	Let us show $\der_{\mu_1} V(t,\mu)$ is bounded away from zero.
	With $e_1 = \mat{1 & 0}^\dagger$, we have
	\begin{align*}
		\frac{J(t, \mu + \delt e_1, \al) - J(t, \mu, \al)}{\delt} = & \frac12 g''(X_T^0) e^{2\be^1_{t,T}}.
	\end{align*}
	Therefore,
	\begin{align*}
		\der_{\mu_1} V(t, \mu) = & \lim_{\delt \to 0} \frac{\inf_{\al \in A} J(t, \mu + \delt e_1, \al) - \inf_{\al \in A} J(t, \mu, \al)}{\delt}\\
		\geq & \lim_{\delt \to 0} \frac{\inf_{\al \in A} (J(t, \mu + \delt e_1, \al) - J(t, \mu, \al))}{\delt}\\
		\geq & \frac12 g''(X_T^0) e^{2\be^1_{t,T}}\\
		> & 0.
	\end{align*}
	Set $\ep = \frac14 g''(X_T^0)\min_{t\in [0,T]} e^{2\be_{t,T}^1} > 0$. If $\tau_\ep > 0$, then 
	\[0 < \der_{\mu_1} V(t,\mu) = (Y_{\tau_\ep})_1 < \ep \leq \frac12  \der_{\mu_1} V(t,\mu),\]
	which is a contradiction.
	Hence $\tau_\ep = 0$. Therefore $(Y_\blnk)_1 = \delt_{\blnk, T}$ is positive everywhere and $\al^*$ is the optimal control on $[0,T]$.
\end{proof}

\section{Proof of the main result}
\label{sec:proofMain}
Using the our previous insights into the continuous-time theory of mini-batch SGD we can finally prove our main result.

\begin{proof}[Proof of Theorem \ref{thm:optBS}]
Firstly, Assumption \assref{assum:riskAndGlobEx} implies global unique existence of continuous solutions to \eqref{eq:gf1D} and the following family of \tSDE s
\begin{equation}
	\label{eq:SME2fmb}
	dX_t^h = - \cR'(X_t^h) - \frac h2 \cR''(X_t^h)\cR'(X_t^h)\,dt + \sqrt{h \al_t \Si(X_t^h)}\,dW_t.
\end{equation}
Setting $g := \cR$, $\si_t := \Si(X_t^0)$, $b^0 = -\cR'$ and $b^1 = -\frac12 \cR'' \cR'$ we see that it implies Assumptions \assref{assum:riskAndGlobExGen} and \assref{assum:gNice}. By Proposition \ref{prop:alOpt}, the solution to the Langrage dual to problem \eqref{eq:diffControProbO32} with Lagrange multiplier $\la > 0$ is given by $\al^*(\la)$. 
Note that by Assumption \assref{assum:riskAndGlobEx}, $\delt_{t,T}$ and $\Si(X_t^0)$ are continuous in $t$. Thus, $\al^*$ is bounded on $[0,T]$ from below, away from $0$. Hence, the dominated convergence theorem implies that
\[C : (0,\infty) \to \R, \la \mapsto \int_0^T \frac{1}{\al_t^*(\la)}\,dt \]
is continuous. We have 
\[\lim_{\la \to 0} C(\la) = \infty, \quad \lim_{\la \to \infty} C(\la) = T.\]
Hence, there exists a $\la > 0$ with $C(\la) = c$, as $c \geq T$, and then $\al^*(\la)$ is the optimum in \eqref{eq:diffControProbO32}.
By Corollary \ref{cor:funOfPerturbSeriesOptim} with $\ep = \sqrt h$ we can transfer the optimal control from the series expansion $X^0 + \sqrt h X^{(1/2)} + h X^{(1)} + h^{3/2} X^{(3/2)}$ back to the solution of \eqref{eq:SME2fmb}, and so there exists a constant $C > 0$, depending on the initial value of $X$, with
\begin{equation}
\label{eq:XquasiOptControl}
\min_{\al\in A(L)} \E \cR(X_T^{h,\al}) = \E \cR(X_T^{h,\al^*}) + C h^2 
\end{equation}
Now, Assumptions \assref{assum:riskAndGlobEx} and \assref{assum:fNice} ensure that \assref{assum:H} and \assref{assum:barHandSi} are fulfilled, uniformly in $\al\in A(L)$ (cf.\ also Remark \ref{rem:simplAssum}). Thus, we can approximate \eqref{eq:fmbSGD} by the second-order diffusion approximation \eqref{eq:SME2fmb}. In particular, Theorem \ref{thm:2ndOrderSME}, Corollary \ref{cor:2ndOrderSMEOptim} and \eqref{eq:XquasiOptControl} imply there exist constants $C_1, C_2, C_3 > 0$, depending on the shared initial value of $\chi$ and $X$, with
\begin{align*}
	\min_{\al\in A(L)}\E \cR(\chi^{h,\al}_{\floor{T/h}}) = &\min_{\al\in A(L)} \E \cR(X_T^{h,\al}) + C_1 h^2\\
	= &\E \cR(X_T^{h,\al^*}) + C_2 h^2\\
	= &\E \cR(\chi^{h,\al^*}_{\floor{T/h}}) + C_3 h^2,
\end{align*}
for all $h\in (0,1)$.
\end{proof} 

\section{Properties of the optimal volatility control for linear regression}
\label{sec:propOfOptVolLinReg}
Recall the optimal volatility control \eqref{eq:optVolLinReg} in the case of linear regression with SGD.
\subsection{Lipschitz constant}
We want to determine an upper bound on the Lipschitz constant of $\sqrt{\al^*}$. Set $s_t := \ga + e^{2\ka t}$.
Note that $\al^*$ is differentiable almost everywhere, with
\begin{align*}
	\der_t \sqrt{\al^*_t} = - \frac{\ka}{2\la} e^{2\ka t} (\al_t^*)^{5/2},
\end{align*}
for $t > \check t$, and $\der_t \sqrt{\al^*_t} = 0$ for $t\in [0,\check t)$.
Hence, we can get a bound on the the Lipschitz constant of $\sqrt{\al^*}$,
\begin{align*}
	\nrm{\sqrt{\al^*}}{\Lip} \leq & \frac{\ka}{2\la} e^{2\ka T}.
\end{align*}
Thus, in Theorem \ref{thm:optBS} we may pick any $L \geq  \frac{\ka}{2\la} e^{2\ka T}$.

\subsection{Determining the Lagrange multiplier}
We have
\begin{align*}
\int_0^T \frac{1}{\al^*_t(\la)}\,dt = & \int_0^{\check t(\la)} 1\,dt + \la^{-1/2} \int_{\check t(\la)}^T \sqrt{\ga + e^{2\ka t}}\,dt \\
= &   \check t(\la) + \la^{-1/2} (F(T) - F(\check t(\la))),
\end{align*}
where 
\[F(t) := \frac1\ka \left(\sqrt{\ga + e^{2\ka t}} - \sqrt \ga \operatorname{ArcTanh}\left(\frac{\sqrt{\ga + e^{2\ka t}}}{\sqrt \ga}\right)\right).\]
We can apply Newton's method to find a zero of $\la \mapsto \check t(\la) + \la^{-1/2} (F(T) - F(\check t(\la))) - c$. Alternatively, if $\la \leq \ga + 1$, then 
\begin{align*}
c = \int_0^T \frac{1}{\al^*_t(\la)}\,dt &\Leftrightarrow \la = \frac{(F(T)-F(0))^2}{c^2}.
\end{align*}

\section{Setup of the numerical experiment}
\label{sec:experiment}
One run of the experiment proceeds as follows.
First, we generate $N$ artificial data points according to the linear model
\[\by = -\bx + \bep,\]
where $\bx, \be \sim \cN(0,1)$ and $\bx, \be$ are independent. We fix a number of SGD steps $M$, such that $N$ is divisible by $M$. Then we use mini batch SGD to fit a linear predictor using square loss in a single epoch, with two different batch size schedules. The first schedule has constant batch size, more precisely
\[B_n^c := N/M.\]
With the second schedule, the batch size in the $n$-th step is given by
\[B_n^o = \operatorname{round}(1/\al^*_{nh}(\la)).\]
Here, $\al^*$ is the optimal volatility schedule in \eqref{eq:optVolLinReg}.
Using binary search we determine $\la$, such that
\[\sum_{n=1}^M B_n^o = N.\]
Both schedules are used $1000$ times for training, yielding instances $\hat \chi^{1,c},\dots, \hat \chi^{1000,c}$ with constant batch size and $\hat \chi^{1,o},\dots, \hat \chi^{1000,o}$ with \enquote{optimal} batch sizes.
Then, we calculate the average excess population risk
\[r^s : n\mapsto \frac{1}{1000} \sum_{i=1}^{1000} (\cR(\hat \chi_n^{i,s}) - \cR^*),\]
for $s = c, o$. Then, we re-scale time to track the number of samples processed, rather than the number of steps. That is, we plot
\[\left(\sum_{n=0}^\nu B_n^s, r^s(\nu)\right), \nu = 0,1,\dots, M,\]
for $s = c, o$.
Additionally, we superimpose the plot of the sequence of \enquote{optimal} batch sizes, in the same time scale
\[\left(\sum_{n=0}^\nu B_n^o, B_\nu\right), \nu = 0,1,\dots, M.\]
\end{alphasection}

\bibliographystyle{abbrv}
\bibliography{batchsize}

\end{document}